\DeclareMathOperator{\Spec}{Spec}
\DeclareMathOperator{\Comm}{Comm}
\DeclareMathOperator{\Aut}{Aut}
\DeclareMathOperator{\Inn}{Inn}
\DeclareMathOperator{\Ker}{Ker}
\DeclareMathOperator{\Rad}{Rad}
\newcommand{\tdlc}{t.d.l.c.\xspace}
\newcommand{\tdlcsc}{t.d.l.c.s.c.\xspace}
\theoremstyle{definition}  \newtheorem{de}{Definition}[section]
\newtheorem{ex}[de]{Example}
\theoremstyle{plain}      
\newtheorem{theorem}[de]{Theorem}
\newtheorem{prop}[de]{Proposition}
\newtheorem{lem}[de]{Lemma}
\newtheorem{cor}[de]{Corollary}
\newtheorem{claim}{Claim}
\newtheorem{case}{Case} 
\theoremstyle{remark}      \newtheorem{rem}[de]{Remark}
\begin{document} 

\title{Totally disconnected locally compact groups\\
 with a linear open subgroup}
 \author[1]{Pierre-Emmanuel Caprace\thanks{F.R.S.-FNRS research associate, supported in part by the ERC (grant \#278469)}}
\author[1]{Thierry Stulemeijer\thanks{F.R.S.-FNRS research fellow}}

\affil[1]{\small{UCLouvain, 1348 Louvain-la-Neuve, Belgium}}

\date{September 25, 2014}

\maketitle

\begin{abstract}
We describe the global structure of totally disconnected locally compact groups having a linear open compact subgroup. Among the applications, we show that if a non-discrete, compactly generated, topologically simple,  totally disconnected locally compact group is locally linear, then it is a simple algebraic group over a local field. 
\end{abstract}

\tableofcontents

\section{Introduction}

A locally compact group is called \textbf{linear} if  it admits a continuous faithful finite-dimensional linear representation over a local field. It is called \textbf{locally linear} if it has an open subgroup which is linear. 
The goal of this paper is to study the class of totally disconnected locally compact groups (\tdlc groups for short) that are locally linear.  Roughly speaking, our main results ensure that such groups are built out of three kinds of elementary pieces: discrete groups, compact groups, and simple algebraic groups over local fields. In order to be more precise, let us   define a \textbf{topologically simple algebraic group over a local field} to be a locally compact group isomorphic to $H(k)/Z$, where $k$ is a local field, $H$ is an absolutely simple, simply connected, isotropic algebraic group over $k$, and $Z$ is the center of $H(k)$ (see \S \ref{sec:LAG} below for more details on those groups). We also say that a locally compact group is \textbf{locally solvable} (resp. \textbf{locally abelian}) if it has a solvable (resp. abelian) open subgroup. 

We can now state our main result.

\begin{theorem}\label{thm:Main1}
Let $ G $ be a \tdlc group having an open compact subgroup which is linear over a local field $k$. Then $ G $ has a series of closed normal  subgroups: 
$$ \lbrace 1\rbrace \unlhd R \unlhd G_{1} \unlhd G_{0} \unlhd G $$ 
enjoying the following properties. 


The group $ R $ is a closed characeristic subgroup and is locally solvable. The group $ G_{0} $ is an open characteristic subgroup of finite index in $ G $. The   quotient group $H_0 = G_0/R$, if non-trivial, has non-trivial closed normal subgroups, say $M_1, \dots, M_m$, satisfying the following properties.

\begin{enumerate}[(i)]

\item For some $l \leq m$ and all $i \leq l$, the group $M_i$ is a topologically simple algebraic group over a local field $k_i$, of the same characteristic and residue characteristic as $k$. In particular $M_i$ is compactly generated and abstractly simple. 

\item For all $j > l$, the group $M_j$ is   compact, h.j.i., and algebraic (in the sense of Definition~\ref{def:algebraicCompact}) over a local field $k_j$, of the same characteristic and residue characteristic as $k$.

\item Every non-trivial closed normal subgroup $N$ of $H_0$ contains $M_i$ for some $i \leq l$, or contains an open subgroup of $M_j$ for some $j > l$. 

\item The quotient group $H_{1} = G_1/R$ coincides with the product $  M_1 \dots M_m \cong M_1 \times \dots \times M_m$, which is closed in $H_0$.  In particular, $ H_{1} $ is compactly generated. Moreover  $H_{0}/H_{1} = G_0/G_{1}$ is  {locally abelian}.

\end{enumerate}
\end{theorem}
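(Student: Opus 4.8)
\medskip
\noindent\textbf{Proof strategy.} The plan is to split the work into a \emph{local} part (understanding a compact linear group over $k$) and a \emph{global} part (propagating that structure to $G$ through the lattice of locally normal subgroups).

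\textbf{Step 1 --- the local model.} I would start from the faithful continuous representation $\rho\colon U\hookrightarrow GL_n(k)$ and, using compactness of $\rho(U)$, conjugate it into $GL_n(\mathcal O_k)$, so that $U$ is profinite and $p$-adic analytic, $p$ being the residue characteristic of $k$. Let $\mathbf L$ be the Zariski closure of $\rho(U)$; after replacing $U$ by $\rho^{-1}(\mathbf L^{\circ}(k))$ I may assume $\mathbf L$ connected and write $\mathbf L=\mathbf R_u(\mathbf L)\cdot(\mathbf S\cdot\mathbf T)$ with unipotent radical $\mathbf R_u(\mathbf L)$, central torus $\mathbf T$ and semisimple part $\mathbf S$; Weil restriction then expresses the simply connected cover of $\mathbf S$ as $\prod_i \mathrm R_{k_i/k}\mathbf H_i$ with each $\mathbf H_i$ absolutely simple and simply connected over a finite extension $k_i/k$ (hence of the same characteristic and residue characteristic as $k$). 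Pulling back, I get an open finite-index $V\le U$ with a finite normal series whose factors are, bottom-up: a compact solvable group, finitely many compact abelian groups, and, for each $\mathbf H_i$ isotropic over $k_i$, a compact open subgroup $W_i$ of $\mathbf H_i(k_i)$; the anisotropic $\mathbf H_i(k_i)$ are themselves compact and join the abelian-by-(h.j.i.)\ part. This is the model I would transport to $G$.

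\textbf{Step 2 --- the radical $R$ and the quotient.} I would define $R$ as the closed subgroup generated by all closed locally solvable subnormal subgroups of $G$, identifying it with the ``solvable bottom'' of the local model inside the structure lattice of $G$ (in the sense of Caprace--Reid--Willis). Checking that $R\cap U$ is commensurate to the solvable part of $V$ gives that $R$ is closed, topologically characteristic, and locally solvable, and that in $\bar G=G/R$ the image $\bar U$ of $U$ is virtually a direct product of compact open subgroups of the isotropic $\mathbf H_i(k_i)$ times a compact diagonalisable part and finitely many h.j.i.\ compact factors; thus $\bar G$ is ``locally semisimple up to a locally abelian piece''.

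\textbf{Step 3 --- the finite-index subgroup $G_0$ and the factors $M_i$.} The compact group $\bar U$ has only finitely many local quasi-factors, and $G$ permutes them together with their field/diagram data via conjugation on $U$; I would let $G_0$ be the preimage of the kernel of this finite action --- an open characteristic subgroup of finite index that acts ``by inner automorphisms'' on each local piece. Assuming $H_0=G_0/R\ne 1$, the Caprace--Monod quasi-product decomposition of $H_0$ produces finitely many non-trivial closed normal subgroups $M_1,\dots,M_m$, each with $M_i\cap\bar U$ commensurate to one of the $W_i$ or to an h.j.i.\ compact factor. If $M_i$ is non-compact it is topologically simple, and since $M_i\cap\bar U$ is open in the isotropic group $\mathbf H_i(k_i)$ it is compactly generated (cocompact action on the Bruhat--Tits building); computing the abstract commensurator of a compact open subgroup of $\mathbf H_i(k_i)$ (which, because $G_0$ acts inner, is $\mathbf H_i^{\mathrm{ad}}(k_i)$) and using topological simplicity to embed $M_i$ into it, I would identify $M_i$ with $H(k_i)/Z$, abstract simplicity following from Tits' simplicity theorem (Kneser--Tits holds over local fields). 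This is (i). If $M_j$ is compact, minimality forces it to be h.j.i., and the local model exhibits it as algebraic over $k_j$ in the sense of Definition~\ref{def:algebraicCompact}, giving (ii).

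\textbf{Step 4 --- the product $H_1$ and the remaining claims.} Distinct quasi-factors commute modulo compact subgroups; a structure-lattice argument upgrades this to $M_1\cdots M_m\cong M_1\times\cdots\times M_m=:H_1$, a closed normal --- hence compactly generated --- subgroup of $H_0$, which is (iv) once I also note that $H_0/H_1$ has a compact open subgroup that is diagonalisable, hence abelian, so $H_0/H_1$ is locally abelian. For (iii), a non-trivial closed normal $N\le H_0$ has non-trivial local intersection with some $M_i$, whence by minimality $M_i\le N$ if $i\le l$ or $N$ contains an open subgroup of $M_j$ if $j>l$. Taking $G_1\le G_0$ to be the preimage of $H_1$ gives the asserted series $\{1\}\unlhd R\unlhd G_1\unlhd G_0\unlhd G$.

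\textbf{Main obstacle.} I expect the hard part to be the recognition of the non-compact $M_i$ in Step 3: turning ``topologically simple, compactly generated, with a compact open subgroup open in a simple algebraic group over $k_i$'' into an honest isomorphism with $H(k_i)/Z$. This forces one to pin down the abstract commensurator of a compact open subgroup of $\mathbf H_i(k_i)$, to control field and diagram automorphisms (which is precisely why $G_0$ is introduced), and to exclude exotic topologically simple groups sharing such a compact open subgroup.
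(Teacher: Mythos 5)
Your global strategy (local model for the compact open subgroup, a locally solvable radical, structure-lattice atoms, abstract commensurators identified via Pink's rigidity, and a characteristic finite-index subgroup killing the permutation of the factors) is broadly the same as the paper's. But Step~1 contains a genuine gap that the rest of the argument inherits. You pass to the Zariski closure $\mathbf L$ of $\rho(U)$ and then assert that $U$ has a finite-index subgroup whose semisimple ``layer'' consists of compact \emph{open} subgroups $W_i$ of the $\mathbf H_i(k_i)$, with $k_i$ finite extensions of $k$. This is false in general: a compact Zariski-dense subgroup of a semisimple group over a local field need not be commensurable to an open subgroup of the $k$-points (e.g.\ $SL_n(\mathbf F_p[\![T^2]\!])$ inside $SL_n(\mathbf F_p(\!(T)\!))$ is Zariski-dense but not open). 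The deep input needed here is precisely Pink's theorem (Corollary~0.5 of \cite{P98}), which produces a possibly \emph{different} local field $k'$ --- only guaranteed to share the characteristic and residue characteristic of $k$, not to contain $k$ --- and a model over $k'$ in which the relevant subquotient of $U$ becomes the image of a compact open subgroup of $\tilde H(k')$ under the universal covering map. Your parenthetical ``so that $U$ is profinite and $p$-adic analytic'' is also wrong when $\mathrm{char}(k)>0$, which is exactly the case the paper is designed to handle; in characteristic $0$ the result is already known via $p$-adic Lie theory.

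Two further points would need repair. First, defining $R$ as the closed subgroup generated by all closed locally solvable subnormal subgroups leaves you with no argument that $R$ is itself locally solvable, nor that $G/R$ has the property actually used downstream, namely local C-stability (trivial quasi-centre and no abelian locally normal subgroups), which is what makes the map into $\prod_i\Comm(\pi_i(\Delta_i))$ injective. The paper instead takes $R=R_{[A]}(G)$, the $[A]$-regular radical of Caprace--Reid--Willis, for which $G/R$ is locally C-stable by construction, and then \emph{proves} local solvability of $R$ by showing that the solvable term $U_3$ of Pink's series is open in $R$. Second, the Caprace--Monod quasi-product decomposition you invoke to produce the $M_i$ requires compact generation, which $H_0$ need not have (the paper's Example~\ref{ex} gives an $H_0$ with no maximal compact normal subgroup); the paper instead constructs $M_i$ explicitly as $\varphi_i^{-1}(\Inn(\tilde H_i(k_i')))\cap\bigcap_{j\neq i}\Ker\varphi_j$ and uses Prasad's Theorem~(T) to show a non-compact $M_i$ must be all of $\tilde H_i(k_i')/Z$.
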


Notice the apparent analogy with the structure of general Lie groups, whose quotient by their solvable radical is semi-simple. It should however be emphasized that the characteristic subgroup $R$ afforded by Theorem~\ref{thm:Main1} is locally solvable, but not solvable in general: indeed, it can contain discrete normal subgroups of $G$ that are non-abelian free groups.

One nevertheless expects that the structure of the normal subgroup $R$ is not too mysterious. In order to make that statement precise, we define the class of \textbf{elementary groups} as the smallest class of \tdlc groups that contains all discrete groups, all profinite groups, and is closed under group extensions and directed unions of open subgroups. This class was first defined and investigated by Ph.~Wesolek in \cite{W14} in the second countable case, and then extended to the general \tdlc case in \cite{W14c}. We obtain the following consequence of Theorem~\ref{thm:Main1}. 

\begin{cor}\label{cor:Main2}
Let $G$ be a \tdlc group having a  compact open subgroup which is linear over a local field $k$. Then $ G $ has a series of closed characteristic subgroups
$$ \lbrace 1\rbrace\unlhd A\unlhd G_{1}\unlhd G_{0}\unlhd G $$
enjoying the following properties. 

The group $A$ is   elementary, $G_0$ is open of finite index in $G$,  and the quotient group $H_{0}=G_0/A$, if non-trivial, satisfies the following.
 
\begin{enumerate}[(i)]

\item $H_0$ has finitely many minimal closed normal subgroups, say $M_1, \dots, M_l$, and  every non-trivial closed normal subgroup of $H_0$ contains some $M_i$. 

\item Each $M_i$ is a topologically simple algebraic group over a local field $k_i$, of the same characteristic and residue characteristic as $k$. In particular $M_i$ is compactly generated and abstractly simple. 

\item The quotient group $H_1 = G_1/R$ coincides with the product $ M_1 \dots M_l \cong M_1 \times \dots \times M_l$, which is closed in $H_0$. Moreover the quotient $G_0/G_1 = H_0/H_1$ is  {locally abelian}; in particular it is   elementary.

\end{enumerate}

\end{cor}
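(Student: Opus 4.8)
The strategy is to take the series $\{1\} \unlhd R \unlhd G_1 \unlhd G_0 \unlhd G$ provided by Theorem~\ref{thm:Main1} and "absorb" the compact h.j.i.\ pieces $M_j$ (for $j > l$) into the radical. Concretely, I would set $A$ to be the preimage in $G$ of the product $M_{l+1} \cdots M_m \subseteq H_1 = G_1/R$. First I would check that $A$ is a closed characteristic subgroup: the $M_j$ for $j > l$ are permuted among themselves by $\Aut(H_0)$ — indeed, by property (iii) the $M_i$ ($i \le l$) are characterized as the closed normal subgroups that are abstractly simple (hence do not contain a proper open subgroup), while the $M_j$ ($j > l$) are the remaining ones, so any automorphism of $H_0$ preserves the set $\{M_{l+1}, \dots, M_m\}$; since $R$ and $G_0$ are characteristic in $G$ and $H_1$ is characteristic in $H_0$, the preimage $A$ is characteristic in $G$. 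Next, $A$ is elementary: it is a group extension of $R$ (which is locally solvable, hence has a solvable — a fortiori elementary — open subgroup, and locally elementary plus, one checks, elementary since solvable groups are elementary and elementary is closed under extensions and directed unions) by $\prod_{j>l} M_j$, a finite product of compact (profinite) groups, which is profinite hence elementary; elementary groups are closed under extension, so $A$ is elementary. Here I should be slightly careful: "locally solvable" gives an open solvable subgroup, and a \tdlc group with an open elementary subgroup is elementary (it is a directed union of open subgroups, or rather it contains such an open subgroup of finite-or-countable... ) — in fact a \tdlc group having an open elementary subgroup is elementary because it is an extension of that open subgroup's normal core... this is a known closure property of the elementary class and I would simply cite Wesolek \cite{W14}, \cite{W14c}.

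Then I would identify the rest of the series. Take $G_1$ and $G_0$ as in Theorem~\ref{thm:Main1} (they are already closed characteristic, with $G_0$ open of finite index), and set $H_0 = G_0/A$, $H_1 = G_1/A$. By construction $H_0 \cong H_0^{\mathrm{Thm}} / (M_{l+1}\cdots M_m)$ where $H_0^{\mathrm{Thm}} = G_0/R$; since $H_1^{\mathrm{Thm}} \cong M_1 \times \dots \times M_m$ with the product closed, quotienting by the (closed, normal, direct-factor) subgroup $M_{l+1} \times \dots \times M_m$ leaves $H_1 = G_1/A \cong M_1 \times \dots \times M_l$, closed in $H_0$, which is part (iii). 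The quotient $H_0/H_1 \cong (G_0/R)/(G_1/R) = H_0^{\mathrm{Thm}}/H_1^{\mathrm{Thm}}$ is locally abelian by Theorem~\ref{thm:Main1}(iv), and a locally abelian group has an open abelian (hence profinite-by-discrete, hence elementary) subgroup, so it is elementary. For part (ii), each $M_i$ ($i \le l$) is a topologically simple algebraic group over a local field $k_i$ of the same characteristic and residue characteristic as $k$, directly from Theorem~\ref{thm:Main1}(i).

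The remaining point — and the one requiring the most care — is part (i): that the $M_1, \dots, M_l$ are exactly the \emph{minimal} closed normal subgroups of $H_0$ and that every non-trivial closed normal subgroup of $H_0$ contains one of them. Each $M_i$ is topologically simple, hence certainly contains no proper non-trivial closed normal subgroup of itself; one must rule out that some smaller closed normal subgroup of $H_0$ sits inside $M_i$ — but a closed normal subgroup of $H_0$ contained in $M_i$ is normal in $M_i$, hence trivial or all of $M_i$ by topological simplicity, so $M_i$ is indeed minimal. For the second assertion, let $N \unlhd H_0$ be non-trivial closed; its preimage $\widetilde N$ in $H_0^{\mathrm{Thm}}$ is a closed normal subgroup strictly containing $M_{l+1}\cdots M_m$, in particular non-trivial, so by Theorem~\ref{thm:Main1}(iii) it contains some $M_i$ ($i \le l$) or an open subgroup of some $M_j$ ($j > l$); in the former case $N \supseteq M_i$ and we are done, and the latter case cannot produce a new minimal subgroup since that open subgroup of $M_j$ is already contained in $A$ and hence maps to the identity in $H_0$ — so we must argue that it is genuinely the former case that matters, i.e.\ that $N$ non-trivial forces $\widetilde N \supsetneq M_{l+1}\cdots M_m$ by a subgroup $\ne$ contained in $\bigcup_{j>l} M_j$, equivalently that $N$ contains some $M_i$. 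The cleanest route: if $N$ contained only (the image of) an open subgroup of some $M_j$ for $j>l$ and nothing more, that image is trivial, contradicting $N \ne \{1\}$; so necessarily $\widetilde N \supseteq M_i$ for some $i \le l$, whence $N \supseteq M_i$. This finishes (i); I would write this argument out carefully as it is the only place where the passage to the quotient interacts non-trivially with the normal-subgroup structure, and then note that finiteness of the number of minimal closed normal subgroups is immediate since $l \le m < \infty$.
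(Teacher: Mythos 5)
There is a genuine gap, and it lies exactly where you flagged the need for care: assertion (i) fails for your choice of $A$. You take $A$ to be the preimage of $M_{l+1}\cdots M_m$, but this is too small. The problem is that Theorem~\ref{thm:Main1}(iii) gives no information about a closed normal subgroup $\widetilde N \unlhd G_0/R$ that already contains $M_{l+1}\cdots M_m$: such an $\widetilde N$ satisfies (iii) vacuously (it contains an open subgroup of some compact $M_j$, namely $M_j$ itself), so nothing forces $\widetilde N \supseteq M_i$ for some $i \leq l$. Your dichotomy ``either $N$ contains some $M_i$, or $N$ is only the image of an open subgroup of some $M_j$ and hence trivial'' is a false dichotomy: $\widetilde N$ can strictly contain $M_{l+1}\cdots M_m$ while meeting $M_1\cdots M_l$ trivially, for instance by picking up material from $\bigcap_{i\leq l} C_{H_0}(M_i)$ or from the locally abelian quotient $H_0/H_1$, which Theorem~\ref{thm:Main1} does not kill. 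Concretely, for $G = (\tilde H(k)\times U')\rtimes L$ with $\tilde H$ simple isotropic, $U'$ a compact h.j.i.\ algebraic group and $L$ acting faithfully by outer automorphisms on $U'$ only, one gets $R=1$, $M_1 = \tilde H(k)/Z$, $M_2 = U'$, and the image of $U'\rtimes L$ in $G/A$ is a non-trivial closed normal subgroup containing no minimal algebraic factor. So with your $A$ the quotient $H_0=G_0/A$ need not have the property that every non-trivial closed normal subgroup contains some $M_i$.

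This is precisely why the paper makes a different (larger) choice: $A$ is the \emph{elementary radical} of $G_0$ (Theorem~\ref{starelementaryradicalexistence}), which is characteristic for free, and one shows that modulo $R$ it equals $W=\bigcap_{i=1}^{l}C_{H_0}(M_i)$. The two key points are: (a) $W$ is compact-by-$\{$locally abelian$\}$, hence elementary (it contains $M_{l+1}\cdots M_m$ as a cocompact-in-$W\cap H_1$ piece and injects modulo that into $H_0/H_1$); and (b) any non-trivial closed normal subgroup $N$ not contained in $W$ must contain some $M_i$ with $i\leq l$, because otherwise $[N,M_i]\leq N\cap M_i$ is a proper closed normal subgroup of the topologically simple $M_i$, hence trivial, forcing $N\leq W$. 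Quotienting by $W$ rather than by $M_{l+1}\cdots M_m$ is what makes (i) true. A secondary weakness of your write-up is the characteristicness of your $A$: the statement of Theorem~\ref{thm:Main1} does not assert that the set $\{M_{l+1},\dots,M_m\}$ is canonically determined by $H_0$, and your attempted intrinsic characterisation (``the remaining ones'') is not one; justifying it would require descending into the proof of the theorem (the $M_i$ are built from the atoms of the structure lattice, which automorphisms do permute). The radical-based choice of $A$ sidesteps this issue entirely.
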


We will see in due course that the characteristic subgroup $A$ afforded by Corollary~\ref{cor:Main2} contains, as expected, the subgroup $R$ afforded by Theorem~\ref{thm:Main1}.

Notice that when $k$ is of characteristic~$0$, the hypotheses of Theorem~\ref{thm:Main1} imply that $G$ is a $p$-adic Lie group, where $p$ is the residue characteristic of $k$. The conclusions of Corollary~\ref{cor:Main2} are then already known, due to Ph.~Wesolek: indeed, they follow from Corollary~1.5 in \cite{W14b}. Moreover, in that special case, the elementary quotient $G_0/G_1$ is even finite (the latter is however not true in positive characteristic, see Example~\ref{ex:Inseparable} below). The main novelty of our results is that they hold in \emph{all} characteristics. The key tool allowing for this uniformity is provided by the far-reaching results of R.~Pink \cite{P98} on compact subgroups of linear algebraic groups. 

Another special case of particular interest is when $G$ is assumed to be \textbf{topologically simple}, i.e. its only closed normal subgroups are the trivial ones. 

\begin{cor}\label{cor:TopolSimple}
Let $G$ be a  \tdlc group having a  compact open subgroup  which is linear over a local field $k$. If $G$ is topologically simple, then one of the following holds. 
\begin{enumerate}[(a)]
\item $G$ is discrete.

\item $G$ is non-discrete, not compactly generated, and locally solvable. 

\item $G$ is a topologically simple algebraic group over a local field $k'$, of the same characteristic and residue characteristic as $k$. In particular $G$ is compactly generated and abstractly simple. 

\end{enumerate}
\end{cor}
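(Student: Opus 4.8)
The plan is to deduce the trichotomy directly from Theorem~\ref{thm:Main1}, using topological simplicity to collapse the normal series it provides. First I would assume $G$ non-trivial (otherwise it is already discrete) and apply Theorem~\ref{thm:Main1}, obtaining closed normal subgroups $\{1\}\unlhd R\unlhd G_1\unlhd G_0\unlhd G$ with the stated properties. Since $R$ and $G_0$ are characteristic in $G$, topological simplicity forces each of them to be $\{1\}$ or $G$. If $G_0=\{1\}$ then $G$ is finite, as $G_0$ has finite index in $G$, hence $G$ is discrete: case~(a). So I would assume $G_0=G$ and split according to whether $R=\{1\}$ or $R=G$.

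If $R=\{1\}$, then $H_0=G_0/R=G$ is non-trivial, so Theorem~\ref{thm:Main1} supplies non-trivial closed normal subgroups $M_1,\dots,M_m$ of $H_0$ satisfying (i)--(iv). Each $M_i$ is a non-trivial closed normal subgroup of the topologically simple group $G$, hence $M_i=G$ for all $i$. By~(iv) the multiplication map $M_1\times\dots\times M_m\to M_1\cdots M_m$ is an isomorphism, so distinct factors intersect trivially; since each $M_i=G\neq\{1\}$ this forces $m=1$ and $M_1=G$. If $l=1$, property~(i) says exactly that $G=M_1$ is a topologically simple algebraic group over a local field of the same characteristic and residue characteristic as $k$, compactly generated and abstractly simple: this is case~(c). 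If $l=0$, then $1=m>l$, so property~(ii) gives that $G=M_1$ is compact; a topologically simple profinite group is finite (an infinite profinite group has a proper, hence non-trivial, open normal subgroup), so $G$ is discrete: case~(a).

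There remains the case $R=G$, where $G=R$ is locally solvable. If $G$ is discrete we are in case~(a); so I would assume $G$ non-discrete, and then the only thing still to prove is that $G$ is not compactly generated, after which case~(b) holds. This is the one point that is not a formal consequence of Theorem~\ref{thm:Main1}, and it is where I expect the main difficulty to lie. I would argue by contradiction: if $G$ were compactly generated it would be a non-discrete, compactly generated, topologically simple \tdlc group, and I would invoke the fact --- which should be available from (or provable along the lines of) the known structure theory --- that such a group cannot be locally solvable. Plausible routes are via Wesolek's theorem that a non-discrete, compactly generated, topologically simple \tdlc group is non-elementary, or via the non-amenability of such groups, or by ruling out a non-trivial abelian compact subgroup with open normalizer; in any of these one would presumably exploit the hypothesis that the compact open subgroup is linear over $k$, so that Pink's theorem controls the (solvable) compact open subgroup. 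Either way, assuming $G$ compactly generated contradicts $R=G$, so $G$ is not compactly generated, and we are in case~(b).

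In summary, almost everything is bookkeeping with the series of Theorem~\ref{thm:Main1} --- drive $R$ and $G_0$ to $\{1\}$ or $G$, read off the structure of $H_0$, and use that an infinite profinite group is never topologically simple --- and the single genuinely non-formal ingredient is the exclusion of a non-discrete, compactly generated, topologically simple, locally solvable \tdlc group in the last case.
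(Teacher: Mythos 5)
Your proposal is correct and follows essentially the same route as the paper: reduce via the normal series of Theorem~\ref{thm:Main1}, use topological simplicity to force $R\in\{1,G\}$, get case~(c) from $m=1$ with $M_1$ non-compact, and dispose of the compact case since a topologically simple profinite group is finite. The one ingredient you leave open --- that a non-discrete, compactly generated, topologically simple, locally solvable \tdlc group cannot exist --- is exactly your third suggested route and is supplied by the theorem from \cite{CRW14} already quoted in Section~\ref{sec:LocNorm} (such a group is locally $C$-stable, hence has no non-trivial abelian locally normal subgroup, whereas the last non-trivial term of the derived series of a solvable compact open subgroup would be one).
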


The following consequence is immediate. 

\begin{cor}\label{cor:S}
Let $G$ be a non-discrete, compactly generated, topologically simple, \tdlc{} group. If $G$ is locally linear, then $G$ is algebraic: indeed $G$ is a topologically simple algebraic group over a local field. 
\end{cor}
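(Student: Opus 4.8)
The plan is to obtain Corollary~\ref{cor:S} directly from Corollary~\ref{cor:TopolSimple}, the only point requiring a word of justification being the passage from the hypothesis that $G$ is \emph{locally linear} to the hypothesis of Corollary~\ref{cor:TopolSimple}, namely that $G$ possesses a compact open subgroup that is linear over a local field.

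First I would invoke van Dantzig's theorem: since $G$ is \tdlc, it admits a compact open subgroup $U$. By hypothesis $G$ also has an open subgroup $V$ carrying a continuous faithful finite-dimensional representation $\rho \colon V \to \mathrm{GL}_n(k)$ for some local field $k$. Then $W = U \cap V$ is again a compact open subgroup of $G$, and the restriction $\rho|_W$ is a continuous, faithful, $n$-dimensional linear representation of $W$ over $k$; hence $W$ is a compact open linear subgroup of $G$. Thus $G$ satisfies the hypotheses of Corollary~\ref{cor:TopolSimple}.

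Next I would apply Corollary~\ref{cor:TopolSimple} to $G$. Case~(a) there is excluded because $G$ is non-discrete by assumption, and case~(b) is excluded because it requires $G$ not to be compactly generated, whereas $G$ is compactly generated by assumption. Therefore case~(c) holds: $G$ is a topologically simple algebraic group over a local field, which is exactly the assertion (the refinement that this local field has the same characteristic and residue characteristic as $k$ comes along for free).

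As for the main obstacle: there is none of substance here. The statement is a formal consequence of Corollary~\ref{cor:TopolSimple}, and the only thing to verify is the elementary fact that a (closed, or indeed arbitrary) subgroup of a linear group inherits linearity by restricting the representation, which makes ``locally linear'' equivalent, for a \tdlc group, to ``having a compact open linear subgroup''. All the genuine work has already been carried out in establishing Theorem~\ref{thm:Main1} and Corollary~\ref{cor:TopolSimple}.
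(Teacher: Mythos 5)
Your argument is correct and is exactly the intended deduction: the paper's own proof is the one-liner ``Immediate from Corollary~\ref{cor:TopolSimple}'', and your verification that a locally linear \tdlc group has a compact open linear subgroup (via van Dantzig and restriction of the representation) together with the exclusion of cases (a) and (b) is precisely the routine content being left implicit there.
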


A systematic study of the class $\mathscr S$ of  non-discrete, compactly generated, topologically simple, \tdlc groups has been initiated in \cite{CRW14}. Corollary~\ref{cor:S} implies that the locally linear members of $\mathscr S$ are precisely the algebraic ones, and are thus all known since the latter algebraic groups have been classified by Kneser and Bruhat--Tits. 

Another, more specific, application of Corollary~\ref{cor:S} is the following. \emph{All those locally compact Kac--Moody groups (i.e. complete Kac--Moody groups over finite fields) that are known to be globally non-linear, are also not locally linear: none of their compact open subgroups is linear.} This applies to all irreducible Kac--Moody groups of non-spherical, non-affine type which are either of rank at least three (by \cite{CR}) or of rank two, and whose generalized Cartan matrix has $-1$ as an off-diagonal entry (by \cite{CR2}). 

Finally, we record the following application. 

\begin{cor}\label{cor:LC}
Let $G$ be a compactly generated, topologically simple, locally compact group. Then $G$ is linear over a (possibly Archimedean) local field if and only if $G$ belongs to one of the following classes:
\begin{itemize}
\item Finite simple groups.

\item Simple Lie groups. 

\item Simple algebraic groups over local fields. 
\end{itemize}

\end{cor}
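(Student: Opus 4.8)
The plan is to prove both implications. The "if" direction is essentially standard: finite simple groups are obviously linear (over any field, or view them as finite matrix groups); simple Lie groups admit a faithful finite-dimensional representation over $\mathbb{R}$ by the adjoint representation once one knows the center is trivial (a simple Lie group in the usual sense has trivial center, or one quotients appropriately), and $\mathbb{R}$ is an Archimedean local field; simple algebraic groups over local fields are linear by definition of the ambient matrix group $H \leq \mathrm{GL}_n$. So the substance is the "only if" direction.

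For the converse, suppose $G$ is compactly generated, topologically simple, and linear over a local field $K$. Split into cases according to whether $K$ is Archimedean. If $K$ is Archimedean, then $G$ is a (second countable) locally compact group with a faithful continuous finite-dimensional real or complex representation; I would argue that $G$ must be a Lie group — a closed subgroup of $\mathrm{GL}_n(\mathbb{R})$ or $\mathrm{GL}_n(\mathbb{C})$ is a Lie group by Cartan's closed subgroup theorem — and then invoke the structure theory of compactly generated topologically simple Lie groups: the identity component $G^\circ$ is a closed normal subgroup, hence trivial or all of $G$; if $G^\circ = \{1\}$ then $G$ is discrete, hence compactly generated means finitely generated, and topologically simple means simple — but a linear finitely generated simple group need not be finite, so one must be careful. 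Here I would need that a finitely generated \emph{linear} group which is simple is finite: this is false in general (Burger--Mozes, Thompson-like groups are not linear though) — wait, actually infinite finitely generated simple linear groups do not exist? Hmm, one must check: infinite simple linear groups exist (e.g. $\mathrm{PSL}_n(\mathbb{Q})$), but they are not finitely generated; and by a theorem of Wilson or by the Tits alternative combined with results on just-infinite groups, a finitely generated simple linear group is finite. I would cite the appropriate result (e.g. the fact that finitely generated linear groups are residually finite, so a finitely generated infinite simple group cannot be linear). That disposes of the discrete case. If $G^\circ = G$, then $G$ is a connected simple Lie group.

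For the non-Archimedean case, $G$ is a compactly generated topologically simple \tdlc group which is linear, in particular locally linear, so Corollary \ref{cor:S} applies directly: $G$ is a topologically simple algebraic group over a local field, landing in the third class. The main obstacle I anticipate is the discrete sub-case of the Archimedean analysis — pinning down that a finitely generated simple linear group is finite — and, relatedly, handling the possibility that a compactly generated topologically simple \emph{Lie} group could be neither discrete nor connected; here one uses that $G/G^\circ$ is then a compactly generated topologically simple \tdlc group, which is again locally linear (a linear Lie group has a linear, indeed trivial-near-the-identity... no, rather a compact open subgroup of $G/G^\circ$ lifts to a compact subgroup of the linear group $G$, hence is linear), so Corollary \ref{cor:S} applies to $G/G^\circ$ unless it is discrete — but $G/G^\circ$ being a quotient of $G$ by a closed normal subgroup and $G$ topologically simple forces $G^\circ = \{1\}$ or $G^\circ = G$, so this mixed case does not actually arise. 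I would organize the write-up to make that dichotomy explicit first, then treat the two clean cases.
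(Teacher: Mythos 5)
Your proposal is correct and, once you carry out the reorganization you announce at the end, it is essentially the paper's proof: the dichotomy $G^\circ=\{1\}$ or $G^\circ=G$ reduces everything to the trichotomy connected / discrete / non-discrete totally disconnected, handled respectively by the structure theory of connected locally compact groups, by Mal'cev's residual finiteness of finitely generated linear groups, and by Corollary~\ref{cor:S}. The one small inaccuracy is in the connected case: a continuous faithful representation need not have closed image, so Cartan's closed subgroup theorem does not apply directly; the paper instead invokes the solution to Hilbert's fifth problem (equivalently, one notes that linearity gives no small subgroups, or that topological simplicity kills the small compact normal subgroups furnished by Gleason--Yamabe), after which a connected topologically simple Lie group is a simple Lie group.
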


\subsection*{Acknowledgement}

We thank Phillip Wesolek for his comments on an earlier version of this paper, and for pointing out that Corollary~\ref{cor:Main2} holds without any hypothesis of second countability.

\section{Algebraic \tdlc groups}

\subsection{Linear algebraic groups}\label{sec:LAG}

Let $ H $ be a \textbf{linear algebraic $ k $-group}, where $ k $ is a field. By definition, this means that $ H $ is a smooth affine group scheme of finite type over $ k $. Equivalently, $ H $ is (schematically) isomorphic to a smooth Zariski closed subgroup of $ GL_{n,k}$. As all algebraic groups used in this paper are linear, we omit this adjective in the sequel.

For $ \varphi \colon H\rightarrow H_{1} $ a morphism of algebraic $ k $-groups, we denote the evaluation of $ \varphi $ in a $ k $-algebra $ A $ by $ \varphi_{A} \colon H(A)\rightarrow H_{1}(A) $, or sometimes by $ \varphi \colon H(A)\rightarrow H_{1}(A) $.

When $ k $ is a Hausdorff (i.e. not anti-discrete) topological field, the group $H(k) $ inherits a Hausdorff topology, which does not depend on the embedding into $ GL_{n,k} $ (see e.g. \cite{PR94}). We adopt the convention that any topological statement will refer to that topology, and not to the Zariski topology, unless we explicitly add the prefix Zariski (e.g. Zariski-connected or Zariski-dense). When $k$ is a non-discrete locally compact field, the group $ H(k) $ is a locally compact second countable topological group.

A semisimple algebraic $ k $-group $ H $ is called \textbf{$ k $-simple} if it has no non-trivial Zariski-connected normal algebraic $ k $-subgroup. It is called \textbf{absolutely simple} if for any field extension $ k\rightarrow k' $, the algebraic $ k' $-group $ H\times_{k}k' $ is $ k' $-simple. Equivalently, $ H $ is absolutely simple if its root system is irreducible.

The study of Zariski-connected semisimple algebraic groups reduces for the most part to that of absolutely simple ones. Namely, a Zariski-connected, semisimple, simply connected (resp. adjoint) algebraic $ k $-group is the direct product of $ k $-simple, simply connected (resp. adjoint) algebraic $ k $-groups, and each factor is of the form $ \mathcal{R}_{k'/k}H $ for some absolutely simple, simply connected (resp. adjoint) algebraic $ k' $-group $ H $, where $ k' $ is a finite separable extension of $ k $. Here, $ \mathcal{R}_{k'/k}H $ denotes the Weil restriction of $ H $. For a proof of those facts, see e.g. Proposition 6.4.4, Remark 6.4.5 and Example 6.4.6 in \cite{C11}.

Let $ H $ be a Zariski-connected algebraic $ k $-group. As in \S6 of \cite{BT73}, we denote by $ H(k)^{+} $ the normal subgroup of $ H(k) $ generated by $ k $-rational points of split unipotent $ k $-subgroups of $ H $.

In order to properly understand the definition of an algebraic topologically simple \tdlc group, we need two results, that will also be invoked later on.

\begin{prop}\label{closed image}
Let $ k $ be a local field, let $ H $ be a simply connected, $ k $-simple algebraic $ k $-group. Any continuous homomorphism $ f \colon H(k)\rightarrow G $ to a locally compact group $G$ is a closed map.
\end{prop}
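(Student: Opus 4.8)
The plan is to use the well-known structure of $H(k)$ when $H$ is simply connected, $k$-simple over a local field. The key input is a theorem of Prasad (and Tits): for such $H$ that is $k$-isotropic, the group $H(k)^+$ equals $H(k)$ itself, and $H(k)^+$ is (abstractly) simple modulo its center — indeed, by the Kneser–Tits conjecture which is known for local fields, $H(k)^+ = H(k)$, and every noncentral subgroup normalized by $H(k)$ is all of $H(k)$. In the $k$-anisotropic case, $H(k)$ is compact, and then any continuous homomorphism from a compact group to a Hausdorff locally compact group automatically has closed (compact) image, so that case is immediate. So I would split into these two cases at the outset.

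In the isotropic case, let $f\colon H(k)\to G$ be a continuous homomorphism and set $N = \ker f$, a closed normal subgroup of $H(k)$. Since $H(k)/Z$ is abstractly simple (with $Z$ the center, which is finite), either $N$ is central — hence finite — or $N$ has finite index, in which case $f$ has finite image and there is nothing to prove (a finite subset of a Hausdorff group is closed). So assume $N$ is finite; then $f$ factors as $H(k) \to H(k)/N \to G$, and since $H(k)\to H(k)/N$ is a quotient map of locally compact groups it suffices to show the induced injective continuous homomorphism $\bar f\colon H(k)/N \hookrightarrow G$ is closed. Replacing $H(k)$ by $H(k)/N$, I have reduced to showing: an injective continuous homomorphism from $H(k)$ (isotropic simply connected $k$-simple, modulo a finite central subgroup) into a locally compact group $G$ is a closed map, equivalently a topological embedding onto a closed subgroup.

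For this final reduction I would invoke compact generation together with a Baire-category / open-mapping argument. The group $H(k)$ is compactly generated (it is generated by the bounded subgroups coming from a good maximal compact subgroup, or directly: an isotropic simple group over a local field is generated by finitely many root subgroups each of which is a one-parameter group, hence $\sigma$-compact and compactly generated). Let $\overline{f(H(k))}$ be the closure of the image; it is a $\sigma$-compact, hence Baire, locally compact group, and $f$ is a continuous homomorphism from the $\sigma$-compact group $H(k)$ onto a dense subgroup of it. Now the key point: the image $f(H(k))$ is a dense normal... — more carefully, I would argue that $\overline{f(H(k))}/f(H(k))$ would otherwise produce, via pulling back a proper open subgroup, a proper finite-index or proper closed cofinite structure contradicting simplicity; the cleanest route is to show $f(H(k))$ is closed directly by the open mapping theorem: since $H(k)$ is $\sigma$-compact and $\overline{f(H(k))}$ is locally compact, if $f(H(k)) = \overline{f(H(k))}$ we are done, so suppose not; then pick $g \in \overline{f(H(k))} \setminus f(H(k))$ and derive a contradiction using that $f(H(k))$ is a Baire, $\sigma$-compact dense subgroup which, by the open mapping theorem applied to $f\colon H(k)\to \overline{f(H(k))}$, would have to be meagre in $\overline{f(H(k))}$ unless it is all of it — but a $\sigma$-compact subgroup that is non-meagre is open hence closed, and density then forces equality.

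The main obstacle I expect is making the last step fully rigorous without second-countability hypotheses: the open mapping theorem for topological groups requires $\sigma$-compactness of the source (which holds) and local compactness of the target (which holds for $\overline{f(H(k))}$), so $f\colon H(k)\to\overline{f(H(k))}$ is open, whence $f(H(k))$ is open in $\overline{f(H(k))}$, hence closed, hence equal to $\overline{f(H(k))}$; combined with openness, $f$ is then a quotient map, and since we arranged its kernel to be trivial it is a topological isomorphism onto its closed image. The one genuine subtlety is verifying compact generation / $\sigma$-compactness of $H(k)$ in the isotropic case in all characteristics, but this is standard (Bruhat–Tits theory: $H(k)$ acts cocompactly on its building), and the anisotropic case was dispatched separately. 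I would write the argument in the order: (1) anisotropic case via compactness; (2) isotropic case, reduce modulo the kernel using abstract simplicity of $H(k)/Z$; (3) conclude via $\sigma$-compactness of $H(k)$ and the open mapping theorem that the image is closed.
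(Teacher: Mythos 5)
Your reductions are sound: the anisotropic case via compactness of $H(k)$, the passage to an injective map using finiteness of the centre and the abstract simplicity of $H(k)/Z$ (after reducing $k$-simple to absolutely simple via Weil restriction so that Kneser--Tits gives $H(k)=H(k)^+$), and the observation that quotienting by a compact kernel, or having finite image, preserves closedness of the map. The gap is in the final and decisive step. The open mapping theorem (Hewitt--Ross (5.29)) requires the homomorphism to be \emph{surjective} onto a locally compact (hence Baire) target; it does not apply to $f\colon H(k)\to\overline{f(H(k))}$, which is only known to have dense image. Your fallback dichotomy --- ``$f(H(k))$ is either meagre in its closure or non-meagre, and in the latter case it is open, hence closed, hence everything'' --- is correct as far as it goes, but it leaves the meagre case entirely open, and that is exactly the hard case. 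An injective continuous homomorphism from a $\sigma$-compact locally compact group onto a dense \emph{proper} subgroup of a locally compact group is a common phenomenon: $\mathbf{Z}\to\mathbf{R}/\mathbf{Z}$ by an irrational rotation, or $SL_n(\mathbf{Z}[1/p])$ embedded diagonally in $SL_n(\mathbf{R})\times SL_n(\mathbf{Q}_p)$. So no purely topological argument of this shape can work; some algebraic input about $H(k)$ beyond $\sigma$-compactness and simplicity modulo centre is indispensable.

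For comparison, the paper does not reprove the statement: it cites Lemma 5.3 of \cite{BM96}, noting only that second countability of the target is superfluous. The argument there is a properness argument: using the Cartan decomposition $H(k)=KA^{+}K$ and the fact that conjugation by a sequence going to infinity in $A^{+}$ contracts suitable root subgroups to the identity, one shows that if $g_n\to\infty$ in $H(k)$ while $f(g_n)$ converges in $G$, then $f$ must kill a nontrivial unipotent element, contradicting injectivity; hence $f$ is proper, and proper continuous maps into locally compact Hausdorff spaces are closed. Alternatively one could invoke Prasad's Theorem~(T) (every proper open subgroup of $H(k)^{+}$ is compact) to show a compact open subgroup of $\overline{f(H(k))}$ lies in the image, at least when the target is totally disconnected. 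Either way, your proposal as written is missing the step that actually uses the algebraic group structure, and without it the conclusion does not follow.
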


\begin{proof}
See Lemma 5.3 of \cite{BM96}. Note that the assumption there that the target of the map should be second countable is superfluous.
\end{proof}

\begin{theorem}\label{thmT}
Let $ k $ be a local field and let $ H $ be a $ k $-simple algebraic $ k$-group. Then any proper open subgroup of $ H(k)^{+} $ is compact.
\end{theorem}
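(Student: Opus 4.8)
The plan is to reduce to the case where $H$ is absolutely simple and isotropic, and then invoke the fact that $H(k)^+$ acts strongly transitively on the Bruhat–Tits building $X$ of $H$ over $k$, so that proper open subgroups can be controlled geometrically. First I would dispose of the anisotropic case: if $H$ is $k$-anisotropic then $H(k)$ is compact (Bruhat–Tits, or Prasad's theorem), $H(k)^+ = \{1\}$ in characteristic zero and is compact in general, so there is nothing to prove. Hence assume $H$ is isotropic. Using the structure recalled in \S\ref{sec:LAG}, write $H$ (up to central isogeny, which does not affect $H(k)^+$ by definition since it is generated by images of split unipotent subgroups) as a Weil restriction $\mathcal R_{k'/k} H'$ with $H'$ absolutely simple over a finite separable extension $k'/k$; since $H(k) = H'(k')$ compatibly with the $+$-construction and the topology, I may assume $H$ is absolutely simple, isotropic, simply connected over $k$.

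Now let $U < H(k)^+$ be a proper open subgroup; I want to show $U$ is compact. Let $X$ be the Bruhat–Tits building of $H$ over $k$. The group $H(k)^+$ acts on $X$ with compact open stabilizers of points (indeed of bounded sets), and this action is strongly transitive; moreover $H(k)^+$ is generated by the pointwise stabilizers of the panels, or equivalently $H(k)^+$ is exactly the subgroup generated by all root groups $U_\alpha(k)$, which acts on $X$ via its $BN$-pair. The key input is the simplicity-type theorem of Bruhat–Tits / Tits: the $BN$-pair of $H(k)^+$ on $X$ is \emph{irreducible} (because $H$ is absolutely simple) and satisfies Tits' axiom, so any subgroup of $H(k)^+$ normalized by the $B$-part, or any subgroup acting on $X$ without a fixed point at infinity, is the whole group — more precisely, I would use that a proper open subgroup $U$ must fix a point of $X$. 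Concretely: $U$ open means $U$ contains a compact open subgroup $K_0$, which fixes some vertex; if $U$ did not fix any point of $X$, then the convex hull of a $U$-orbit would be unbounded and $U$-invariant, and by the strong transitivity and irreducibility of the action one shows the subgroup generated by $K_0$ and a hyperbolic (or building-wall-crossing) element of $U$ already contains enough root groups to generate all of $H(k)^+$, contradicting properness. Therefore $U$ fixes a point (or at least a bounded subset) of $X$, hence is contained in a stabilizer, which is compact, so $U$ is compact.

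The main obstacle is the geometric step asserting that a proper open subgroup must fix a point of the building — this is where the absolute simplicity (irreducibility of the root system) and the precise definition of $H(k)^+$ via split unipotent subgroups are essential, and it is exactly the positive-characteristic-robust substitute for the $p$-adic Lie group argument. I expect the cleanest route is to cite the relevant statement from Bruhat–Tits theory directly: $H(k)^+$ has no proper open subgroup of infinite covolume acting nontrivially on $X$, equivalently every noncompact subgroup generated by $H(k)^+$-conjugates of a compact open subgroup that is not itself bounded already equals $H(k)^+$; this is essentially the content of Prasad's theorem on the $BN$-pair structure together with the fact (Proposition~\ref{closed image} applied to inclusions, or Tits simplicity) that $H(k)^+$ admits no proper subgroup of finite index and no proper open normal-ish subgroup. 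I would phrase the final write-up as: reduce to absolutely simple isotropic simply connected $H$; recall the strongly transitive action of $H(k)^+$ on its Bruhat–Tits building $X$; show a proper open $U$ fixes a point of $X$ using irreducibility and strong transitivity; conclude $U$ is contained in a compact stabilizer.
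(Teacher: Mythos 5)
The paper's own proof of Theorem~\ref{thmT} is a single citation: the statement is exactly Theorem~(T) of Prasad \cite{P82}, and no preliminary reduction is needed since Prasad's result applies to $k$-simple groups directly. Your closing remark that ``the cleanest route is to cite the relevant statement'' is therefore the right instinct and coincides with what the paper does; the reductions you perform (disposing of the anisotropic case via Theorem~\ref{thm:BT}(i), passing to an absolutely simple group via Weil restriction) are correct but superfluous for that purpose.

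The attempted self-contained argument, however, has a genuine gap at its centre. Granting that point stabilizers for the action of $H(k)^{+}$ on its Bruhat--Tits building are compact open, and that by the Bruhat--Tits fixed point theorem a closed subgroup fixes a point if and only if it is bounded, the assertion ``every proper open subgroup fixes a point'' is \emph{equivalent} to the theorem, not a stepping stone towards it. The sentence in which you derive it --- ``by the strong transitivity and irreducibility of the action one shows the subgroup generated by $K_0$ and a hyperbolic element of $U$ already contains enough root groups to generate all of $H(k)^{+}$'' --- is precisely where all the work lies: one must produce, from an unbounded element $g\in U$ and the compact open subgroup $K_0\leq U$, full root groups $U_\alpha(k)$ for sufficiently many roots $\alpha$ (via a Cartan decomposition of $g$ and a contraction argument showing that $g^{n}U_\alpha(k)g^{-n}$ eventually lies inside $K_0$), and then invoke the fact that a subgroup containing the unipotent radicals of two opposite parabolics contains $H(k)^{+}$. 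None of this is supplied, and strong transitivity alone does not yield it. As written the sketch is circular; to repair it you should either carry out Prasad's contraction argument in detail or, as the paper does, simply cite Theorem~(T) of \cite{P82}.
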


\begin{proof}
See Theorem (T) of \cite{P82}.
\end{proof}

\begin{de}\label{algebraicinS}
Let $ G $ be a topologically simple \tdlc group. We say that $ G $ is \textbf{algebraic} if there exists a local field $ k $ and a $ k $-simple algebraic $ k $-group $ H $ such that $ G $ is topologically isomorphic to $ H(k)^{+}/Z(H(k)^{+}) $. 
\end{de} 

Note that by the main result of \cite{T64}, the quotient group $ H(k)^{+}/Z(H(k)^{+}) $ is abstractly simple. Moreover, the group  $ H(k)^{+}$ is compactly generated, as a consequence of Theorem~\ref{thmT}. 

The following theorem (which is a collection of results borrowed from \S6 of \cite{BT73}) shows that in the above definition, one can assume that $ H $ is simply connected, absolutely simple and that $ H(k)=H(k)^{+} $. This confirm that Definition~\ref{algebraicinS} is consistent with the definition of a topologically simple algebraic group over a local field given in the introduction. 

\begin{theorem}\label{thm:BT}
Let $k$ be a local field and $H$ be a  a $ k $-simple algebraic $ k $-group. 

\begin{enumerate}[(i)]
\item $ H(k)^{+} $ is trivial if and only if $ H $ is anisotropic over $ k $.

\item  Assume that  $ H $ is isotropic over $k$. Any central isogeny $ f \colon H\rightarrow H_{1} $ induces an isomorphism $ H(k)^{+}/Z(H(k)^{+}) \to H_{1}(k)^{+}/Z(H_{1}(k)^{+}) $.

\item If $ k' $ is a finite separable extension of $ k $ and $ H $ is  defined over $k' $, then $ (\mathcal{R}_{k'/k}H)(k)^{+}\simeq H(k')^{+} $.

\item  If $ H $ is isotropic, simply connected and absolutely simple, then we have $ H(k)=H(k)^{+} $.

\end{enumerate}
\end{theorem}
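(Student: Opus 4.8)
The plan is to treat the four statements essentially independently, since each is a known fact extracted from \S6 of \cite{BT73}, and to organize the argument so that the absolutely simple, simply connected case is dispatched first and the general $k$-simple case is reduced to it via Weil restriction.

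For (i), I would argue as follows. If $H$ is anisotropic over $k$, then $H$ has no proper parabolic $k$-subgroup, hence no nontrivial split unipotent $k$-subgroup, so by definition $H(k)^+$ is trivial. Conversely, if $H$ is isotropic, then $H$ contains a $k$-split torus of positive dimension, hence a nontrivial $k$-split unipotent subgroup (a root subgroup attached to a $k$-root), and its $k$-points are nontrivial because $k$ is infinite; thus $H(k)^+ \neq \{1\}$. This is exactly the content of the relevant statement in \cite[\S6]{BT73}.

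For (iv), the core input is the fact, due to Borel--Tits (and in the $p$-adic/local setting relying on Kneser--Bruhat--Tits when needed), that if $H$ is simply connected, absolutely simple and isotropic over a local field $k$, then $H(k)$ is generated by the $k$-points of the unipotent radicals of its minimal parabolic $k$-subgroups --- equivalently $H(k) = H(k)^+$. I would simply cite \cite[\S6]{BT73} for this; the only thing to check is that ``absolutely simple'' lets us invoke it directly rather than through the $k$-simple reduction. For (iii), the isomorphism $(\mathcal{R}_{k'/k}H)(k) \cong H(k')$ is functorial and standard, and it is immediate from the construction of $H(k)^+$ (generation by split unipotent subgroups) that it restricts to $(\mathcal{R}_{k'/k}H)(k)^+ \cong H(k')^+$, because split unipotent $k$-subgroups of $\mathcal{R}_{k'/k}H$ correspond to split unipotent $k'$-subgroups of $H$ under Weil restriction; again this is \cite[\S6]{BT73}.

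For (ii), assuming $H$ isotropic, I would take a central isogeny $f\colon H \to H_1$ and analyze the induced map $f_k\colon H(k) \to H_1(k)$. It need not be surjective, but Borel--Tits show $f_k(H(k)^+) = f_k(H(k))^+ = H_1(k)^+$, i.e. $f$ restricts to a surjection $H(k)^+ \to H_1(k)^+$ with central kernel contained in $Z(H(k))\cap H(k)^+$; passing to the quotient by centers then yields an isomorphism $H(k)^+/Z(H(k)^+) \to H_1(k)^+/Z(H_1(k)^+)$. The point requiring care --- and the main obstacle --- is that the kernel of $f_k$ restricted to $H(k)^+$ is exactly $Z(H(k)^+)$ (not merely contained in $Z(H(k)^+)$), so that the map on quotients is injective and not just surjective; this is where one genuinely uses that $H$ is $k$-simple and isotropic together with the structure theory of \cite[\S6]{BT73} (in particular that $H(k)^+$ modulo its center is abstractly simple, so its only proper normal subgroup is trivial). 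Having fixed this, statement (ii) follows, and the theorem is complete by reference to \cite{BT73}.
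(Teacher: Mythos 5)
Your proposal is correct and takes essentially the same approach as the paper: the paper's entire proof of this theorem is the citation ``See \S6 of \cite{BT73}'', and your sketch is an accurate account of how each of the four parts is extracted from that reference (including the one genuinely delicate point, namely using the abstract simplicity of $H(k)^{+}/Z(H(k)^{+})$ to get injectivity in (ii)). Nothing further is needed.
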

\begin{proof}
See \S6 of \cite{BT73}.
\end{proof}

\subsection{Hereditarily just-infinite groups}

An essential point in the proof of Theorem \ref{mainthm} is that the linearity of the open compact subgroup, say $U$, implies that $U$ has subquotients that are hereditarily just-infinite.

\begin{de}
A profinite group $ G $ is called \textbf{just-infinite} if it is infinite and every non-trivial closed normal subgroup of $ G $ is of finite index. A profinite group $ G $ is called \textbf{hereditarily just-infinite} (h.j.i.) if every open subgroup of $ G $ is just-infinite.

\end{de}

\begin{theorem}\label{h.j.i.}
Let $ U $ be an open compact subgroup of $ H(k) $, where $ H $ is a $ k $-simple, simply connected algebraic $ k $-group, and $ k $ is a local field of residue characteristic $ p $. Then $ U/Z(U) $ is a non-virtually abelian h.j.i. virtually pro-$ p $ group .

\end{theorem}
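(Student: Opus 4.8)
The plan is to analyse the structure of the compact group $U$ through its action on the Bruhat--Tits building, together with classical facts about Chevalley groups over local rings and their congruence subgroups. First, since $U$ is open in $H(k)$ and $H$ is $k$-simple and simply connected, $U$ is commensurable with a maximal compact subgroup, hence (passing to a finite-index subgroup does not change the h.j.i.\ conclusion) I may take $U$ to be a parahoric subgroup, or even the stabilizer $U = \underline{H}(\mathcal{O})$ of a chamber for a suitable smooth $\mathcal{O}$-model, where $\mathcal{O}$ is the ring of integers of $k$. The filtration of $U$ by principal congruence subgroups $U_n = \ker\big(\underline{H}(\mathcal{O}) \to \underline{H}(\mathcal{O}/\mathfrak{m}^n)\big)$ gives a base of identity neighbourhoods consisting of normal open subgroups; the quotient $U_1/U_2$ is a vector group over the residue field, on which $U/U_1$ (a finite group of Lie type, or an adjoint quotient thereof) acts via the adjoint representation of the Lie algebra $\mathfrak{h}$ over $\mathbb{F}_q$. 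Standard structure theory of Chevalley groups shows $U_1$ is pro-$p$, and since $U/U_1$ is finite, $U$ is virtually pro-$p$; modding out by the (finite) centre $Z(U)$ does not affect this, so $U/Z(U)$ is virtually pro-$p$.

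The core of the argument is hereditarily just-infinite-ness. Let $V \leq U$ be an arbitrary open subgroup; I must show every non-trivial closed normal subgroup $N \trianglelefteq V$ has finite index. Shrinking $V$, I may assume $V = V \cap U_m$ is contained in a deep congruence subgroup and is itself a ``standard'' subgroup, i.e.\ $V \supseteq U_N$ for some $N$ and $V/U_N$ sits inside $U_m/U_N$. The key input is that the Lie algebra $\mathfrak{h}$ of $H$ is simple (as $H$ is $k$-simple and simply connected, its Lie algebra is absolutely simple over $\bar k$, or at worst simple over $k$ after the Weil-restriction reduction of Theorem~\ref{thm:BT}), hence for all but finitely many residue characteristics the adjoint module $\mathfrak{h} \otimes \mathbb{F}_q$ is an irreducible module for the group $\underline{H}(\mathbb{F}_q)$, or more precisely it has no submodules stable under the relevant parahoric. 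A non-trivial $N \trianglelefteq V$ has non-trivial image in some layer $U_n \cap V / U_{n+1} \cap V$, a submodule of $\mathfrak{h}\otimes\mathbb{F}_q$ stable under $V$; irreducibility forces this image to be everything, and then a commutator/filtration-climbing argument (using that $[U_i, U_j] \supseteq U_{i+j}$ when the bracket on $\mathfrak{h}$ is surjective, which again follows from simplicity of $\mathfrak{h}$) propagates fullness downward, giving $N \supseteq U_M$ for some $M$, whence $[V:N] < \infty$. The statement that $U/Z(U)$ is non-virtually-abelian is then automatic: a virtually abelian profinite group cannot be hereditarily just-infinite unless it is virtually $\mathbb{Z}_p$, but the layers $U_n/U_{n+1} \cong \mathfrak{h}\otimes\mathbb{F}_q$ have dimension $\dim H \geq 3$, so $U$ has ``more than one-dimensional'' growth and no open subgroup is abelian.

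The main obstacle is the handling of small residue characteristic: when $p$ is small relative to the Coxeter number of $H$ (e.g. $p = 2$ in type $A_1$, or $p = 2, 3$ in exceptional types), the adjoint module $\mathfrak{h}\otimes\mathbb{F}_q$ may fail to be irreducible — it can have a centre or a non-trivial radical coming from the failure of the Killing form to be non-degenerate, and the bracket $[\cdot,\cdot]$ on $\mathfrak{h}$ need not be surjective onto $\mathfrak{h}$. To deal with these bad primes I would not rely on the naive filtration but instead invoke a more robust input: the results of R.~Pink \cite{P98} on compact subgroups of algebraic groups (cited in the introduction as the key uniform tool), which control closed subgroups of $H(k)$ of ``positive dimension'' in a characteristic-free way, or alternatively the known classification of just-infinite profinite groups of finite lower rank together with the fact that the $p$-Frattini series of $U$ has layers of bounded dimension equal to $\dim H$. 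A clean alternative is: since $H(k)$ itself has the property that every non-trivial closed normal subgroup is cocompact and in fact $H(k)^+/Z$ is abstractly simple (Theorem~\ref{thm:BT} and \cite{T64}), and since $V$ is open in $H(k)$ hence Zariski-dense, any closed normal $N \trianglelefteq V$ that is infinite must be open by a Lie-theoretic argument (its closure has positive dimension, is normalised by a Zariski-dense subgroup, hence is normal in $H$, hence — by $k$-simplicity — is cofinite); the residue-characteristic hypothesis then only enters to guarantee $U$ is virtually pro-$p$, which is classical. I expect the write-up to reduce Theorem~\ref{h.j.i.} to exactly this dichotomy (infinite normal subgroups are open; finite ones are trivial because $V$ has no small finite normal subgroups once we pass deep enough in the congruence filtration).
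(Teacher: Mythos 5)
Your overall strategy --- reduce hereditarily just-infinite to just-infinite, then show that every non-central closed normal subgroup of a compact open subgroup of $H(k)$ is open --- is the right one, and your treatment of virtual pro-$p$-ness and of non-virtual-abelianity is essentially sound. But the central step is not actually established by either of the arguments you sketch, and it is precisely the point where the paper leans on an external result: the main theorem of Riehm (\emph{The congruence subgroup problem over local fields}, Amer.\ J.\ Math.\ 92 (1970)), which says exactly that for $H$ simply connected and $k$-simple over a local field, every non-central normal subgroup of a compact open subgroup of $H(k)$ is open, uniformly in the characteristic and residue characteristic. The paper's proof consists of the reduction (the preimage in $H(k)$ of an open subgroup of $U/Z(U)$ is again a compact open subgroup with the same centre, so h.j.i.\ follows from just-infinite applied to all such preimages), a citation of Riehm for the just-infinite and virtually pro-$p$ assertions, and a one-line Zariski-density argument for non-virtual-abelianity.

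Your first argument (irreducibility of the adjoint module on the congruence layers) fails at bad primes, as you acknowledge; Riehm's paper is essentially the careful analysis needed to push the filtration argument through at those primes, so the ``more robust input'' is not an optional patch --- it is the entire content of the step. Your ``clean alternative'' has a genuine logical gap: from the fact that the Zariski closure of an infinite closed normal subgroup $N \trianglelefteq V$ equals $H$ you cannot conclude that $N$ is open in the locally compact topology. Zariski density is far weaker than openness, and bridging that gap is exactly where one needs either Lie theory --- unavailable in the form you invoke when $\mathrm{char}(k) = p > 0$, since there is no exponential map for pro-$p$ subgroups of $H(k)$ and $\mathfrak{h}$ may fail to be simple --- or the congruence-filtration analysis you were trying to avoid. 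Pink's results, as used elsewhere in the paper, concern abstract isomorphisms and the classification of compact subgroups; they are not a substitute for the congruence subgroup property here. As written, the proposal therefore does not prove the theorem in positive characteristic; it needs the citation of Riehm (or a complete reproof of his theorem) at the key step.
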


\begin{proof}
First note that $ U $ is Zariski-dense in $ H(k) $,  hence $ U $ is infinite and $ Z(U)=Z(H(k))\cap U $.  We want to show that if $ U_{1}\leq U/Z(U) $ is an open subgroup, then it is just-infinite. But the preimage of such a $ U_{1} $ is an open compact subgroup of $ H(k) $ as well, which has the same center than $ U $. Hence it suffices to prove that $ U/Z(U) $ is just-infinite and virtually pro-$p$. Now, this follows directly from the main result of \cite{R70}.

 It remains to show that $ U/Z(U) $ is not virtually abelian. But if it was, $ H(k) $ would have an open, hence Zariski-dense, abelian open subgroup, contradicting the fact that $ H $ is not abelian.
\end{proof}

We emphasize that when $\mathrm{char}(k) > 0$, the hypothesis that $ H $ is simply connected is essential in Theorem \ref{h.j.i.}. Indeed, as one immediately deduces from \cite{R70}, the above result does not hold if $ H $ is a $ k $-simple group whose universal cover $ \pi : \tilde{H}\rightarrow H $ is inseparable. Here is  an explicit example.

\begin{ex}\label{ex:Inseparable}
Consider the group $ H = PSL_{2} $ over the local field $ k = \mathbf{F}_{2}(\!(T)\!) $, and the open compact subgroup $ U = PSL_{2}(\mathbf{F}_{2}[\![T]\!]) $ in $ PSL_{2}(k) $. We insist that we consider $ PSL_{2} $ as the quotient scheme $ SL_{2}/\mu_{2} $ (over $ \Spec \mathbf{Z}$), and that $ PSL_{2}(\mathbf{F}_{2}[\![T]\!]) $ denotes the group of $ \mathbf{F}_{2}[\![T]\!] $ rational points of $ PSL_{2} $, not to be confused with the quotient group $ SL_{2}(\mathbf{F}_{2}[\![T]\!])/Z(SL_{2}(\mathbf{F}_{2}[\![T]\!])) $.

The universal cover is $ \pi : SL_{2}\rightarrow PSL_{2} $, which is purely inseparable over $ k $. Now, $ H(k)^{+} $ is a closed normal subgroup of $ H $ and is equal to $ \pi_{k}(SL_{2}(k)) $ (see \S6 of \cite{BT73}). Let us show explicitly that $ H(k)^{+}\cap U $ is not open in $ U $. It suffices to consider the sequence
\[
h_{i}=
  \begin{pmatrix}
    1+T^{\frac{1+2i}{2}} & 0\\
    0 & (1+T^{\frac{1+2i}{2}})^{-1} \\
  \end{pmatrix}
\]
whose elements are in $ H(k)\setminus H(k)^{+} $ and which converge to the identity.

Let us check that $ h_{i}\in H(k) $. If $ k[X_{11},X_{12},X_{21},X_{22}]/\det $ denotes the coordinate ring of $ SL_{2} $ in its standard coordinates, then the coordinate ring of $ PSL_{2} $ is the subring of $ k[SL_{2}] $ generated by all products $ X_{ij}X_{kl} $ where $ i,j,k,l \in \lbrace 1,2\rbrace $. This shows that $ h_{i} $ is indeed in $ H(k) $.
\end{ex}

\section{Locally normal subgroups}\label{sec:LocNorm}

The interaction between the local and global structure of general \tdlc groups has become more and more apparent in recent works. In this section, we review some tools from  \cite{CRW13} and \cite{W14} and use them to establish subsidiary facts that will be needed in the sequel.

\subsection{Locally $ C $-stable groups}

Let $G$ be a \tdlc group. 

\begin{de}\label{Cstable}
\begin{enumerate}[(1)]
\item The \textbf{quasi-center} of $G$, denoted by   $ QZ(G) $, is the characteristic subgroup of $ G $ consisting of all elements whose centraliser is open. More generally, given  $ H \leq  G $, we  define the \textbf{quasi-centraliser} of $ H $ in $ G $, denoted by $ QC_{G}(H) $, to be the subgroup of $ G $ consisting of those elements that centralise an open subgroup of $ H $.

\item A subgroup $ K\leq G $ is called \textbf{locally normal} if it is compact and normalised by an open subgroup of $ G $.

\item $ G $ is called \textbf{locally $ C$-stable} if $ QZ(G) $ is trivial and there is no non-trivial abelian locally normal subgroup.

\end{enumerate}

\end{de}

The following property of locally C-stable groups will be needed later. 

\begin{prop}\label{prop:locCstable}
Let $G$ be a locally C-stable \tdlc group. Then every locally normal subgroup of $G$ has trivial quasi-center. 
\end{prop}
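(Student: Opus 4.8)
The plan is to argue by contradiction, the essential difficulty being to promote local information about the compact group $K$ (namely, that some centraliser is open \emph{in $K$}) to local information about $G$. I would first put the locally normal subgroup into a convenient shape. Let $K$ be a locally normal subgroup of $G$ and choose an open subgroup $U\leq G$ normalising $K$; then $K\trianglelefteq UK$ and $UK$ is open in $G$. By van Dantzig's theorem $UK$ contains a compact open subgroup $V$, and then $W:=KV$ is a compact (hence profinite) open subgroup of $G$ in which $K$ is normal. So it suffices to treat the case $K\trianglelefteq W$ with $W$ a profinite open subgroup of $G$ (and $K$ itself profinite). Suppose, for contradiction, that $QZ(K)\neq\{1\}$, and fix $x\in K\setminus\{1\}$ whose centraliser $C_K(x)$ is open in $K$.

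The heart of the argument is to replace $x$ by a whole $W$-invariant subgroup with open centraliser in $K$. Since $W$ is profinite it has a base $\{N_\alpha\}$ of open normal subgroups at $1$; then $\{K\cap N_\alpha\}$ is a base of neighbourhoods of $1$ in $K$ consisting of subgroups that are open in $K$ and normal in $W$. As $C_K(x)$ is open in $K$, it contains some $M:=K\cap N_\alpha$. Now set $N:=\overline{\langle x^{W}\rangle}$, the closed subgroup of $K$ generated by the conjugates $wxw^{-1}$ ($w\in W$). Then $N$ is a non-trivial compact subgroup of $K$ with $N\trianglelefteq W$; moreover, since $M=M^{w}$ centralises $x^{w}$ for every $w\in W$, the subgroup $M$ centralises $\langle x^{W}\rangle$ and hence its closure $N$, so $C_K(N)\supseteq M$ is open in $K$. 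Consequently $Z(N)=N\cap C_K(N)$ is open in $N$, so $[N:Z(N)]<\infty$.

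I would then conclude in two cases, each contradicting one of the two defining properties of local $C$-stability. If $Z(N)\neq\{1\}$, then $Z(N)$ is a non-trivial compact abelian subgroup which, being characteristic in $N$ and $N$ being normalised by the open subgroup $W$, is a non-trivial abelian locally normal subgroup of $G$ --- impossible. If $Z(N)=\{1\}$, then $N$ is finite (as $[N:Z(N)]<\infty$), hence $C_W(N)$ has finite index in $W$ and is closed, so it is open in $W$ and in $G$; any $z\in N\setminus\{1\}$ then has open centraliser in $G$, so $z\in QZ(G)$, again impossible. Therefore $QZ(K)=\{1\}$.

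The step I expect to be the main obstacle is exactly the passage from ``open in $K$'' to ``open in $G$'': the openness of $C_K(x)$ in $K$ says nothing about $G$ when $K$ is a tiny compact subgroup. Two ingredients overcome this: (a) the van Dantzig reduction, which realises $K$ as a normal subgroup of a profinite \emph{open} subgroup $W$ of $G$, so that $W$-invariance is genuine local normality in $G$; and (b) the remark that inside the profinite group $W$ the open subgroup $C_K(x)$ of $K$ already contains a $W$-normal open subgroup, which lets us upgrade the single element $x$ to a $W$-normal subgroup $N$ that is virtually central in itself --- after which the two structural alternatives for $N$ (a non-trivial abelian characteristic part, or $N$ finite) each clash with one half of local $C$-stability.
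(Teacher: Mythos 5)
Your argument is correct. Note that the paper itself does not prove this proposition: it simply cites Proposition~3.14 of \cite{CRW13}, so you have supplied a genuine self-contained proof where the authors deferred to a reference. Your route is the natural one and every step checks out: the van Dantzig reduction legitimately places $K$ as a closed normal subgroup of a profinite open subgroup $W$ of $G$; the passage from the single quasi-central element $x$ to the $W$-normal subgroup $N=\overline{\langle x^W\rangle}$ with open centraliser is the key move, and it works precisely because $C_K(x)$, being open in $K$, contains some $K\cap N_\alpha$ with $N_\alpha$ open normal in $W$, whence $M$ centralises every conjugate $x^w$ and therefore all of $N$; and the resulting dichotomy ($Z(N)$ non-trivial abelian locally normal, versus $N$ finite and hence consisting of quasi-central elements of $G$, since a closed finite-index subgroup of the profinite group $W$ is open) contradicts exactly the two halves of local $C$-stability. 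What your proof buys over the paper's citation is transparency: it isolates the only two mechanisms by which a quasi-central element of a locally normal subgroup could survive, and shows each is excluded by one of the defining conditions of local $C$-stability, without invoking any of the structure-lattice or quasi-centraliser machinery developed in \cite{CRW13}.
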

\begin{proof}
See  \cite{CRW13}, Proposition~3.14.
\end{proof}

The locally $ C $-stable assumption is indeed a weakening of the hypotheses that $ G $ is a compactly generated and topologically simple, as  asserted by the following.

\begin{theorem}
Let $ G $ be a non-discrete topologically simple \tdlc which is compactly generated. Then $ G $ is locally $ C $-stable.
\end{theorem}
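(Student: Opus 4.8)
The statement to prove is: a non-discrete, compactly generated, topologically simple \tdlc group $G$ is locally $C$-stable, i.e.\ $QZ(G)$ is trivial and $G$ has no non-trivial abelian locally normal subgroup. The plan is to handle the two conditions in turn, exploiting the fact that $QZ(G)$ is a \emph{normal} subgroup of $G$ (it is characteristic), so topological simplicity forces it to be trivial or dense.

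\textbf{Step 1: The quasi-center is trivial.} Since $QZ(G)$ is a characteristic, hence normal, subgroup of $G$, topological simplicity gives that either $QZ(G) = \{1\}$ or $\overline{QZ(G)} = G$. I would rule out the second possibility. If $QZ(G)$ were dense, then since $G$ is non-discrete one shows this contradicts compact generation: a standard fact (see \cite{CRW13}) is that a compactly generated \tdlc group with dense quasi-center is discrete. Concretely, pick a compact open subgroup $U$ and a compact generating set; each generator, lying in the closure of $QZ(G)$, can be approximated by quasi-central elements, and the centralizers of finitely many quasi-central elements form an open subgroup; one deduces that some open subgroup is centralized by a generating set, forcing $G$ to be abelian-by-(compact open), hence not topologically simple unless discrete. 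Thus $QZ(G) = \{1\}$.

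\textbf{Step 2: No non-trivial abelian locally normal subgroup.} Suppose $K \leq G$ is a non-trivial abelian locally normal subgroup; so $K$ is compact, abelian, and normalized by some open subgroup $V$. Then the centralizer $C_G(K)$ contains $K$ and is normalized by $V \cap C_G(K)$... more usefully, consider $QC_G(K)$, the quasi-centralizer. Since $K$ is abelian, $K \leq QC_G(K)$. The key point is that $QC_G(K)$, for $K$ locally normal, contains an open subgroup of $V$ in a suitable sense, and one shows that a non-trivial compact abelian locally normal subgroup produces elements of $QZ(G)$: indeed, a result from \cite{CRW13} (essentially Proposition~3.14 quoted above, or the machinery behind it) shows that in a group with trivial quasi-center, an abelian locally normal subgroup must itself be trivial, because its elements would end up having open centralizer. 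Alternatively, and more self-containedly: $K \neq \{1\}$ compact abelian normalized by open $V$ means $\bigcup_{v \in V} vKv^{-1}$ generates a locally normal subgroup; taking the normal closure in $G$ and using topological simplicity, one finds $\overline{\langle\!\langle K\rangle\!\rangle} = G$, and then the local abelianness of $K$ propagates to show $G$ is locally abelian, whence by compact generation $G$ is not topologically simple (a compactly generated, topologically simple, locally abelian \tdlc group is discrete) — contradicting $G$ non-discrete.

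\textbf{Main obstacle.} The delicate point is Step 2, specifically controlling the normal closure of $K$ and transferring the abelianness of a locally normal subgroup to a global structural statement. The clean route is to invoke the dichotomy for compactly generated topologically simple groups that their compact open subgroups cannot be virtually abelian (equivalently, such $G$ is never locally abelian unless discrete), which itself rests on the fact that a non-discrete, compactly generated, topologically simple group has no non-trivial virtually abelian open subgroup. I would cite the relevant statement from \cite{CRW13} rather than reprove it. Putting Steps 1 and 2 together yields that $G$ is locally $C$-stable.
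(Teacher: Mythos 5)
The paper does not prove this statement at all: its ``proof'' is the single line ``See \cite{CRW14}, Theorem 5.3'', i.e.\ the result is quoted from Part~II of the Caprace--Reid--Willis series (not from \cite{CRW13}, which is Part~I). So any honest treatment here either reproduces the genuinely difficult argument of \cite{CRW14} (equivalently \cite{BEW11}, Theorem~4.8, for the quasi-centre part) or reduces to that citation. Your proposal tries to do the former, and both of your self-contained arguments have real gaps. In Step~1, the ``standard fact'' as you state it is false: $\mathbf{Z}_p \times \mathbf{Z}$ is a non-discrete compactly generated \tdlc group whose quasi-centre is dense (it is the whole group). Topological simplicity is essential, and your sketch of why it helps does not close: if $G = \langle U, g_1,\dots,g_n\rangle$ with $g_i \in QZ(G)$, you get an open $V \leq U$ centralised by the $g_i$, but the compact open generator $U$ need not normalise or centralise $V$, so no ``abelian-by-(compact open)'' structure follows. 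Bridging exactly this gap is the content of the quoted theorem.

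Step~2 is worse. Your first route rests on the claim that a \tdlc group with trivial quasi-centre has no non-trivial abelian locally normal subgroup. That is false --- if it were true, condition (3) of Definition~\ref{Cstable} would be redundant --- and Proposition~\ref{prop:locCstable} (= \cite{CRW13}, Proposition~3.14) is a \emph{consequence} of local C-stability, not a criterion for it. For a counterexample, take $U = \mathbf{Z}_p \rtimes \Delta$ with $\Delta$ an infinite closed subgroup of $\mathbf{Z}_p^{\times}$ acting by multiplication: $QZ(U)$ is trivial, yet $\mathbf{Z}_p \rtimes 1$ is a non-trivial abelian normal subgroup. Your alternative route asserts that density of the normal closure of a compact abelian locally normal subgroup $K$ ``propagates local abelianness'' to $G$; this is unjustified --- a dense normal subgroup generated by conjugates of a compact abelian group need not be locally abelian, and nothing in your sketch produces an abelian \emph{open} subgroup. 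The honest version of your plan is to cite \cite{CRW14}, Theorem~5.3 outright, which is precisely what the paper does.
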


\begin{proof}
See \cite{CRW14}, Theorem 5.3.
\end{proof}

\subsection{The structure lattice}


Let $G$ be a \tdlc group. 

\begin{de}
Two subgroups $ H, K $ of $ G $ are \textbf{locally equivalent} if there exists a compact open subgroup $ U $ of $ G $ such that $ H\cap U=K\cap U $. The set of all local equivalence classes having a locally normal representative is called the \textbf{structure lattice} of $ G $, and is denoted by $ \mathcal{LN}(G) $. 
\end{de}

$ \mathcal{LN}(G) $ is a lattice in a natural way:  the meet operation is the intersection (of any representatives) and the join operation is the product (of well-chosen representatives). Obviously, $ [\lbrace e\rbrace] $, the local equivalence class of the trivial subgroup, is the minimum of $ \mathcal{LN}(G) $ and we denote it by $ 0 $. At the other extreme, the local equivalence class of compact open subgroups of $ G $ is the maximum of $ \mathcal{LN}(G) $ and we denote it by $ \infty $. We refer the reader to Section 2 of \cite{CRW13} for a more detailed discussion of $ \mathcal{LN}(G) $.

An \textbf{atom} of $ \mathcal{LN}(G) $ is a minimal non-zero element. 
The following lemma is a first elementary observation about the role of h.j.i. locally normal groups in the structure lattice.

\begin{lem}\label{h.j.i. structure lattice}
Let $ G $ be a \tdlc group and  $ \alpha \in \mathcal{LN}(G) $ have a locally normal representative $ V $ which is h.j.i. Then $ \alpha $ is an atom of $ \mathcal{LN}(G) $.
\end{lem}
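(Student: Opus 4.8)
The goal is to show that if $\alpha \in \mathcal{LN}(G)$ has an h.j.i. locally normal representative $V$, then $\alpha$ is an atom, i.e. there is no local equivalence class $\beta$ with $0 < \beta < \alpha$. The plan is to argue directly with representatives: suppose $\beta \in \mathcal{LN}(G)$ satisfies $\beta \leq \alpha$ and $\beta \neq 0$, and show $\beta = \alpha$. Pick a locally normal representative $W$ of $\beta$. Since $\beta \leq \alpha$ in $\mathcal{LN}(G)$, after replacing $W$ by its intersection with a suitable compact open subgroup $U$ of $G$, I can arrange that $W \leq V$ (meets in $\mathcal{LN}(G)$ are realized by intersections, so $W \cap U = W \cap V \cap U$ for appropriate $U$, and shrinking $W$ within its local class is harmless). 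Moreover, again shrinking $U$, both $V$ and $W$ are normalised by $U$; in particular $W$ is normalised by $U$, hence $V \cap U$ normalises $W \cap U$ — wait, I need to be a bit careful — let me instead directly arrange that $W$ is a closed (indeed compact) subgroup of $V$ that is normalised by an open subgroup $U_0$ of $G$ contained in the normaliser of $V$.

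The key step is then: $W$ is normalised by $U_0 \cap V$, which is an open subgroup of $V$ (since $U_0$ is open in $G$ and $V$ is locally normal, $U_0 \cap V$ is open in $V$). So $W$ is a closed normal subgroup of the open subgroup $U_0 \cap V$ of $V$. Because $V$ is h.j.i., every open subgroup of $V$ is just-infinite; in particular $U_0 \cap V$ is just-infinite, so its only closed normal subgroups are the trivial one and those of finite index. Now $W \neq 0$ in $\mathcal{LN}(G)$ means $W$ is infinite (a finite locally normal subgroup represents $0$ since it meets a small enough compact open subgroup trivially), hence $W \neq \{1\}$, hence $W$ has finite index in $U_0 \cap V$. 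A finite-index closed subgroup of $U_0 \cap V$ is open in $V$, hence locally equivalent to a compact open subgroup of $G$, i.e. $[W] = \infty$... that's too strong. The correct conclusion is just that $W$ is open in $V$, so $W$ and $V$ are locally equivalent (they agree on any compact open subgroup of $G$ contained in both, which exists since both are open in the compact group $V \cap U_0$ and $V \cap U_0$ contains a compact open subgroup of $G$). Therefore $\beta = [W] = [V] = \alpha$, as desired.

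The main obstacle is bookkeeping with local equivalence: I must make sure that the reductions "replace $W$ by $W \cap U$" and "pass to a common normalising open subgroup" do not change the local equivalence class and are simultaneously achievable. Concretely, the delicate point is justifying that from $\beta \leq \alpha$ I may choose representatives with $W \leq V$ and a single open subgroup $U_0 \leq N_G(V) \cap N_G(W)$; this uses that finite intersections of open subgroups are open and that $\mathcal{LN}(G)$'s order is defined by "$[A] \leq [B]$ iff $A \cap U \leq B$ for some compact open $U$." Once the representatives are chosen, the argument is a one-line application of the definition of h.j.i. I would also remark that an infinite locally normal subgroup is never in the class $0$, and a compact subgroup of $V$ that is open in $V$ lies in the class $[V]$ — both are standard facts about $\mathcal{LN}(G)$ recalled in Section~2 of \cite{CRW13}, which I would cite rather than reprove.
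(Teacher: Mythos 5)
Your proof is correct and follows essentially the same route as the paper's: realise $\beta$ by a closed normal subgroup of a just-infinite open subgroup $U_0\cap V$ of $V$ and invoke the trivial-or-finite-index dichotomy. One minor slip in your closing justification: $V\cap U_0$ need not contain a compact open subgroup of $G$, since $V$ is only locally normal and need not be open in $G$; the correct (and standard) argument is that $W$ open in $V$ gives a compact open subgroup $U\leq G$ with $U\cap V\leq W$, whence $U\cap V=U\cap W$ and $[W]=[V]$ --- which is precisely the fact from Section~2 of \cite{CRW13} that you cite anyway.
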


\begin{proof}
First note that by definition, $ V $ is not discrete, hence $ \alpha \neq 0 $.

Let $ \beta \in \mathcal{LN}(G) $, and assume that $ \beta \leq \alpha $. We want to show that either  $ \beta = 0 $ or $ \beta = \alpha $. Let $ L $ be a locally normal representative of $ \beta $ (i.e. $ \beta = [L] $), so that our assumption translates as $ [L\cap V]=[L] $.

Consider an open compact subgroup $ W $ of $ N_{G}(L)\cap N_{G}(V) $. Now, $ L\cap W\cap V $ is normal closed in $ W $, hence also in $ W\cap V $. But since the latter is just-infinite, we obtain that $  L\cap W\cap V $ is either trivial, or open in $ W\cap V $, as wanted.
\end{proof}

When $ G $ is a product of h.j.i. groups, we can refine the previous lemma as follows.

\begin{lem}\label{2m}
Let $ G\simeq V_{1}\times \dots \times V_{m} $ be a profinite group which splits as the direct product of finitely many  h.j.i. closed subgroups, none of which is virtually abelian.

Then the structure lattice $ \mathcal{LN}(G) $ is isomorphic to the Boolean algebra of all subsets of $ \lbrace 1,\dots,m\rbrace $. Moreover, for a locally normal representative $ K $ of an element of $ \mathcal{LN}(G) $, there exist $ i_{1},\dots,i_{k}\in \lbrace 1,\dots,m\rbrace $ such that $ U_{i_{1}}\dots U_{i_{k}} \leq K\leq V_{i_{1}}\dots V_{i_{k}} $, where $ U_{i_{j}} $ is an open subgroup of $ V_{i_{j}} $ for all $ j $.

\end{lem}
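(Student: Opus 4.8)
The plan is to analyze a locally normal subgroup $K$ of $G = V_1 \times \dots \times V_m$ via its projections $\pi_i \colon G \to V_i$, show that after passing to a suitable compact open subgroup the image $\pi_i(K \cap W)$ is either trivial or open in $V_i$, and then upgrade this ``coordinate-wise'' dichotomy to the statement that $K$ itself is sandwiched between $U_{i_1}\dots U_{i_k}$ and $V_{i_1}\dots V_{i_k}$. Once that is done, the map sending $[K]$ to the subset $\{i_1,\dots,i_k\} \subseteq \{1,\dots,m\}$ will be seen to be a well-defined lattice isomorphism onto the Boolean algebra $2^{\{1,\dots,m\}}$.

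First I would fix a locally normal representative $K$ and an open compact subgroup $W$ of $N_G(K)$, replacing $W$ by $W \cap (W_1 \times \dots \times W_m)$ for compact open $W_i \leq V_i$ so that $W$ is itself a direct product and $W$ normalizes $K$; set $K_0 = K \cap W$, which represents the same element of $\mathcal{LN}(G)$. For each $i$, the image $\pi_i(K_0)$ is a closed subgroup of $V_i$ normalized by the open subgroup $\pi_i(W)$ of $V_i$; intersecting with $\pi_i(W)$ gives a closed subgroup of $\pi_i(W)$ that is normalized by $\pi_i(W)$, hence normal in it. Since $V_i$ is h.j.i., $\pi_i(W)$ is just-infinite (being an open subgroup of a profinite h.j.i.\ group), so this normal subgroup is either trivial or open in $\pi_i(W)$, hence either trivial or open in $V_i$. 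Let $\{i_1,\dots,i_k\}$ be the set of indices for which the projection is open. This immediately gives the upper bound $K_0 \leq \pi_{i_1}(K_0) \times \dots \times \pi_{i_k}(K_0) \leq V_{i_1}\dots V_{i_k}$, since the projections to the remaining coordinates vanish.

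The main obstacle is the lower bound: a priori $K_0$ could be a ``diagonal'' subgroup strictly smaller than the full product of its projections. Here is where the hypothesis that no $V_i$ is virtually abelian enters, combined with the h.j.i.\ property. Consider the kernel $K_0 \cap V_{i_1}$ (the intersection of $K_0$ with the first factor). This is normal in $W$ and lies in $V_{i_1}$; if it were trivial then $K_0$ would embed into the product of the remaining factors via projection, so $\pi_{i_1}(K_0)$ would be centralized inside $V_{i_1}$ by $\pi_{i_1}(K_0)$ modulo the other coordinates --- more precisely, one argues that $V_{i_1}$ would then contain an open subgroup ($\pi_{i_1}(K_0 \cap W)$) whose commutator with itself is constrained; running the standard argument (as in \cite{CRW13}, using that an h.j.i.\ non-virtually-abelian group has trivial quasi-center by Proposition~\ref{prop:locCstable}, or rather: a just-infinite group that is not virtually abelian has no nontrivial abelian normal subgroup, so distinct direct factors cannot be ``linked'' by a graph-of-isomorphism subgroup) forces $K_0 \cap V_{i_1}$ to be nontrivial, hence open in $V_{i_1}$. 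Iterating over $i_2, \dots, i_k$ yields open subgroups $U_{i_j} \leq V_{i_j}$ with $U_{i_1}\dots U_{i_k} \leq K_0$.

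Finally I would assemble the isomorphism. The assignment $[K] \mapsto \{i_1,\dots,i_k\}$ is well defined because the set of indices with open projection depends only on the local equivalence class (any two representatives agree on a common compact open subgroup). It is order-preserving and order-reflecting: $[K] \leq [L]$ in $\mathcal{LN}(G)$ translates, after passing to a common compact open subgroup, into an inclusion of the sandwiched subgroups, which holds iff the index sets are nested. It is surjective since for any subset $S = \{i_1,\dots,i_k\}$ the subgroup $W_{i_1} \times \dots \times W_{i_k}$ (embedded in $G$) is locally normal with that index set. Injectivity follows from the sandwich bounds: two representatives with the same index set both contain a common open subgroup of $V_{i_1}\dots V_{i_k}$ and are contained in it, hence are locally equivalent. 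This identifies $\mathcal{LN}(G)$ with the Boolean algebra $2^{\{1,\dots,m\}}$, and meets/joins correspond to intersections/unions of index sets, completing the proof.
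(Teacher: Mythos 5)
Your proof follows essentially the same route as the paper's: project $K$ onto each factor, use the h.j.i.\ hypothesis to get the trivial-or-open dichotomy for $\pi_i(K)$, and then use the non-virtually-abelian hypothesis to show that $K\cap V_i$ is itself open whenever $\pi_i(K)$ is, which yields the sandwich $U_{i_1}\dots U_{i_k}\leq K\leq V_{i_1}\dots V_{i_k}$ and hence the Boolean-algebra description of $\mathcal{LN}(G)$. The one place where you defer to ``the standard argument'' is exactly where the paper does its real work, so let me make that step explicit. Suppose $\pi_i(K)$ is open in $V_i$ but $K\cap V_i=1$. Since $V_i$ is normal in $G$ and $N_{V_i}(K)$ (which is open in $V_i$ because $K$ is locally normal) normalizes $K$, we have $[K,N_{V_i}(K)]\leq K\cap V_i=1$; applying $\pi_i$ to the resulting identities $xyx^{-1}=y$ (for $x\in K$ and $y\in N_{V_i}(K)\leq V_i$) shows that the two open subgroups $\pi_i(K)$ and $N_{V_i}(K)$ of $V_i$ centralize each other, so their intersection is an open abelian subgroup of $V_i$, contradicting the hypothesis that $V_i$ is not virtually abelian. (Your parenthetical appeal to Proposition~\ref{prop:locCstable} is not the relevant statement here, and the ``graph of an isomorphism'' picture is only a special case of what could a priori occur; the commutator computation above handles the general situation and is precisely what the paper uses.) With that step filled in, the rest of your sketch --- taking $U_{i_j}=K\cap V_{i_j}$ for the lower bound and reading off the lattice isomorphism with the power set of $\{1,\dots,m\}$ from the sandwich --- matches the paper's argument.
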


\begin{proof}
Let $ \alpha_{i}=[V_{i}] $ be the local equivalence class having representative $ V_{i} $. Obviously, $ \lbrace \alpha_{i_{1}}\vee \dots \vee \alpha_{i_{k}}$  $\vert$  $ 1\leq i_{1}<\dots<i_{k}\leq m$,  $k=1,\dots,m \rbrace $ are different elements in $ \mathcal{LN}(G) $. We want to show that all elements of $ \mathcal{LN}(G) $ are of this form.

Let $ \beta \in \mathcal{LN}(G) $, let $ K $ be a locally normal representative of $ \beta $, 
and consider the projection $ \pi_{i}(K) $ onto the $i$-th factor $V_i$. Since $ K $ is normalised by an open subgroup of $ G $, it is  normalised by an open subgroup of  $ V_{i} $. Furthermore, $ \pi_{i}(K) $ is compact, hence closed.  But $ V_{i} $ is h.j.i., so that $ \pi_{i}(K) $ is either trivial or open in $ V_{i} $. Reordering the $ V_{i} $'s if necessary, we can assume that $ \pi_{i}(K) $ is open in $ V_{i} $ for $ i\leq k $, and $ \pi_{i}(K) $ is trivial for $ i>k $.

We obviously have that $ [K]\leq [V_{1}]\vee \dots \vee [V_{k}] $, and we now prove the reverse inequality. For this, consider $ K\cap V_{i} $, for $ i\leq k $ (where $ V_{i} $ is seen as a subgroup of $ G $ via the natural injection). It is a locally normal subgroup of $ V_{i} $, hence it is either trivial or open in $ V_{i} $. But if $ K\cap V_{i} $ were trivial, we would have $ [K,N_{V_{i}}(K)]\subseteq K\cap V_{i}=\lbrace e\rbrace $, contradicting $ \lbrace e\rbrace\neq [\pi_{i}(K),\pi_{i}(K)\cap N_{V_{i}}(K)]\subseteq [K,N_{V_{i}}(K)] $. We conclude that for $ i\leq k $, $ [V_{i}\cap K]=[V_{i}] $, so that $ [V_{1}]\vee \dots \vee [V_{k}] = [(N_{V_{1}}(K)\cap K)\dots (N_{V_{k}}(K)\cap K)]\leq K $, as wanted.

For the last assertion, note that if $ K $ is a locally normal representative of an element in $ \mathcal{LN}(G) $, then the second paragraph of this proof shows that $ \pi_{i}(K) $ is open in $ V_{i} $ for $ i \in \lbrace i_{1},\dots,i_{k}\rbrace $ and is trivial otherwise, so that $ K\leq V_{i_{1}}\dots V_{i_{k}} $. But the third paragraph shows then that $ N_{V_{i_{j}}}(K)\cap K $ is open in $ V_{i_{j}} $ for all $ j $, and that $ (N_{V_{i_{1}}}(K)\cap K)\dots(N_{V_{i_{k}}}(K)\cap K)\leq K $, as wanted.
\end{proof}

We now consider profinite groups that are virtually a direct product of h.j.i. groups. 

\begin{lem}\label{lem:ProductHJI}
Let $G$ be a profinite group without non-trivial finite normal subgroup, and having an open subgroup $G_0 \cong V_{1}\times \dots \times V_{m} $ which splits  as the direct product of finitely many  h.j.i. closed subgroups, none of which is virtually abelian.

Then $G$ has a characteristic open subgroup $ G_1 \simeq W_1\times \dots \times W_{m} $, contained in $G_0$, which splits  as the direct product of finitely many  h.j.i. closed subgroups, where $W_i $ is an open subgroup of $V_i$.
\end{lem}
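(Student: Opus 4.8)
The plan is to produce $G_1$ as a characteristic open subgroup of $G$ by intersecting the $G$-conjugates of $G_0$, and then to show that this intersection still splits as a direct product of h.j.i. open subgroups of the $V_i$. First I would observe that $G_0$ has only finitely many $G$-conjugates, since it is open in the profinite group $G$ and hence of finite index; therefore $N := \bigcap_{g \in G} gG_0g^{-1}$ is an open normal (in fact characteristic, being the core of $G_0$, but to be safe one replaces it by $\bigcap \{gG_0g^{-1}\}$ which is visibly characteristic since conjugation permutes these subgroups and any automorphism of $G$ permutes the open subgroups of index $[G:G_0]$ --- actually the cleanest route is: the intersection of \emph{all} open subgroups of index dividing $[G:G_0]!$ is characteristic and open and contained in $G_0$, so call that $N$). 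The point is that $N$ is an open characteristic subgroup of $G$ contained in $G_0$.

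Next I would analyze $N$ as a subgroup of $G_0 \cong V_1 \times \dots \times V_m$. Each projection $\pi_i(N)$ is a closed normal subgroup of $V_i$: it is normal because $N$ is normal in $G_0$, and closed because $N$ is compact. Since $V_i$ is h.j.i. and $N$ is open in $G_0$ (hence $\pi_i(N)$ is open in $V_i$ --- indeed $N$ has finite index in $G_0$, so $\pi_i(N)$ has finite index in $V_i$), and $V_i$ is in particular just-infinite and not virtually abelian, we get that $\pi_i(N)$ is a \emph{nontrivial} open normal subgroup, hence of finite index, in $V_i$. Now apply Lemma~\ref{2m} to the profinite group $N$: wait --- $N$ need not itself be a direct product. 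The better move is to apply Lemma~\ref{2m} to $G_0$ directly, with $N$ playing the role of the locally normal representative $K$. Lemma~\ref{2m} tells us that $\mathcal{LN}(G_0)$ is the Boolean algebra on $\{1,\dots,m\}$, and that for the locally normal subgroup $N$ (it is locally normal in $G_0$, being open and normal) there exist indices, but since $\pi_i(N)$ is open for \emph{every} $i$, the relevant index set is all of $\{1,\dots,m\}$, and the lemma yields open subgroups $U_i \leq V_i$ with $U_1 \dots U_m \leq N \leq V_1 \dots V_m = G_0$.

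Finally I would extract the desired splitting. Set $G_1 := N$ if $N$ already happens to be a product, but in general $N$ sits between $U_1\dots U_m$ and $V_1\dots V_m$ without being a product, so instead I would take $W_i := \pi_i(N) \cap (\text{something})$; more precisely, the construction in the last paragraph of the proof of Lemma~\ref{2m} shows $N_{V_i}(N) \cap N$ is open in $V_i$ for each $i$ and that $\prod_i (N_{V_i}(N)\cap N) \leq N$. Since $N$ is normal in $G_0 \supseteq V_i$, we have $N_{V_i}(N) = V_i$, so in fact $V_i \cap N$ is open in $V_i$ and $\prod_i (V_i \cap N) \leq N \leq \prod_i V_i$, forcing $N = \prod_{i}(V_i\cap N)$. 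Thus setting $W_i := V_i \cap N$ gives $N \cong W_1 \times \dots \times W_m$ with each $W_i$ open in $V_i$; each $W_i$, being an open subgroup of the h.j.i. group $V_i$, is itself h.j.i. Then $G_1 := N$ is the required characteristic open subgroup. I expect the main obstacle to be the bookkeeping around characteristicity (ensuring the open subgroup we build is invariant under \emph{all} automorphisms of $G$, not merely normal) and the verification that $N$ decomposes as the internal direct product of the $V_i \cap N$ rather than merely being sandwiched between two products; both are handled by feeding $N$ into Lemma~\ref{2m} and using that $N$ is normal (so the normalizers $N_{V_i}(N)$ are all of $V_i$).
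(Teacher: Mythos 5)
Your strategy---manufacture a characteristic open subgroup $N\le G_0$ directly and then argue that it splits---is genuinely different from the paper's, which instead forms the double centralizers $L_i=C_G(C_G(V_i))$ (canonical because they depend only on the local classes $[V_i]$, which $\mathrm{Aut}(G)$ permutes as the atoms of the structure lattice by Lemma~\ref{2m}), shows the $L_i$ pairwise commute and generate a characteristic open subgroup isomorphic to $L_1\times\dots\times L_m$, and finally shrinks inside $G_0$ using characteristic neighbourhood bases of the h.j.i.\ factors. Your route has two gaps, one of which is fatal. The fatal one is the last step: from the sandwich $\prod_i(V_i\cap N)\le N\le \prod_i V_i$ you conclude $N=\prod_i(V_i\cap N)$, but this implication is false. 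All the sandwich gives is that $N/\prod_i(V_i\cap N)$ embeds in the finite group $\prod_i V_i/(V_i\cap N)$, and the containment can be strict. Concretely, take $V_1=V_2=V$ an h.j.i., non-virtually-abelian pro-$p$ group with an open normal subgroup $W$ of index $p$ (e.g.\ the Nottingham group), and let $N=\{(a,b)\in V\times V:\ aW=bW\}$. This is an open normal subgroup with $V_i\cap N=W$ and $W\times W\le N\le V\times V$, yet $N\neq W\times W$; in fact $N$ admits no internal direct decomposition $N=AB$ with $A\le V_1$, $B\le V_2$, since any such $A,B$ would lie in $V_i\cap N=W$ and force $N\le W\times W$. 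Replacing $N$ by $\prod_i(V_i\cap N)$ does not repair this, because the subgroups $V_i$ themselves (unlike their local classes) are not canonical in $G$ and need not be preserved by automorphisms of $G$; restoring characteristicity at that point is exactly what the paper's $L_i$ construction is for.

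The second gap concerns the openness of your $N$. The intersection of all open subgroups of $G$ of index dividing $[G:G_0]!$ is open only if there are finitely many such subgroups, which is guaranteed when $G$ is topologically finitely generated but not in general: a profinite group such as $(\mathbf{Z}/2)^{\mathbf{N}}$ has infinitely many open subgroups of index $2$ with trivial intersection. Finite generation of $G$ is neither assumed nor an obvious consequence of the hypotheses (it is not clear that an arbitrary h.j.i.\ profinite group is finitely generated; the paper itself invokes finite generation only for h.j.i.\ \emph{virtually pro-$p$} groups). Excluding this scenario here would need a separate argument---say, that a $V_i$ embedding in a product of finite groups of bounded order would have finite exponent, hence be locally finite, contradicting h.j.i.\ and not virtually abelian---which you do not supply.
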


\begin{proof}
We first claim that $G$ is locally C-stable. Indeed, every h.j.i. group which is not virtually abelian has trivial quasi-center, by Proposition~5.1 from \cite{BEW11}. Therefore, the quasi-center of $G$ must be finite, hence trivial by hypothesis. The fact that the only abelian locally normal subgroup of $G$ is the trivial one follows from Lemma~\ref{2m}. The claim stands proven. 

Let now $\alpha_i = [V_i] \in \mathcal{LN}(G)$. Since the quasi-centraliser $QC_G(V_i)$ depends only on the local class $\alpha_i$, we denote it by  $QC_G(\alpha_i)$, following the convention adopted in \cite{CRW13}. Next we set $L_i = QC_G(QC_G(\alpha_i))$. Since $G$ is locally C-stable, we may invoke  Lemma~3.11(ii) from \cite{CRW13}, which shows that $L_i = C_G(C_G(V_i))$. Thus, the subgroup $L_i$ is closed in $G$.  By Lemma~\ref{2m}, the automorphism group $\mathrm{Aut}(G)$ permutes the $\alpha_i$ and, hence, permutes the closed subgroups $L_i$. 

We now claim that $[L_i] = \alpha_i$ and that $L_i$ commutes with $L_j$ for all $i \neq j$. Note that $ V_{i}\subseteq L_{i} = C_G(C_G(V_i)) $, hence we just have to show that $ V_{i} $ is open in $ L_{i} $. Set $ P_{i} := \prod \limits_{j\neq i}V_{j} $. Then $ G\geqslant L_{i}P_{i}\geqslant V_{i}P_{i} = G_{0} $. But $ G_{0} $ is open in $ G $, so that the index of $ V_{i} $ in $ L_{i} $ is finite, as wanted. For the second assertion of the claim, starting from $ V_{i}\subseteq C_{G}(V_{j}) $ (which is true for all $i \neq j$), we get $ L_{i} = C_G(C_G(V_i)) \subseteq C_G(C_G(C_{G}(V_{j})))  = C_{G}(V_{j}) $. So that finally, $ C_{G}(L_{i})\supseteq C_G(C_G(V_{j})) = L_{j} $, as was to be shown.

Therefore the subgroup $G_2 \leq G$ generated by the $L_i$'s is characteristic, open, and isomorphic to the direct product $L_1 \times \dots \times L_m$. Since each $L_i$ is h.j.i., it has a basis of identity neighbourhoods consisting of characteristic subgroups. Therefore, the same is true for $G_2$, and therefore $G_2$ has a characteristic subgroup $G_1$ contained in $G_0$, which has the desired form. 
\end{proof}

Building upon Lemmas~\ref{h.j.i. structure lattice} and~\ref{2m}, we obtain the following more technical fact.

\begin{lem}\label{structurelattice}
Let $G$ be a profinite group having a closed normal subgroup $ \Gamma \simeq V_{1}\times \dots \times V_{m} $ which splits  as the direct product of finitely many  h.j.i. closed subgroups, none of which is virtually abelian. Assume further that $ G $ contains no non-trivial abelian locally normal subgroup, and that $ G/\Gamma $ is abelian. 

Then for all $ 0\neq \beta \in \mathcal{LN}(G) $, there exists $ i \in \lbrace 1,\dots,m\rbrace $ such that $ [V_{i}]\leq \beta $. In particular, the set of all the atoms of $ \mathcal{LN}(G) $ is precisely $ \lbrace [V_{1}],\dots,[V_{m}] \rbrace $.
\end{lem}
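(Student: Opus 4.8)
The plan is to work with a non-zero locally normal representative and transfer the problem into the normal subgroup $\Gamma$, where Lemma~\ref{2m} applies. First I would fix $0\neq\beta\in\mathcal{LN}(G)$ and choose a locally normal representative $K$ of $\beta$, so $K$ is compact and normalised by some compact open subgroup $U$ of $G$; shrinking $U$ if necessary we may assume $K\leq U$. The key auxiliary object is $K\cap\Gamma$. Since $\Gamma$ is normal in $G$, the subgroup $K\cap\Gamma$ is still compact and normalised by $U$, hence is itself a locally normal subgroup of $G$; moreover it is a locally normal subgroup of $\Gamma$, so Lemma~\ref{2m} (applied to the open subgroup $U\cap\Gamma$ of $\Gamma$, which splits as a product of the h.j.i.\ pieces $V_i\cap(U\cap\Gamma)$, or more simply applied to $\Gamma$ itself) tells us exactly what its local class can be: either $[K\cap\Gamma]=0$, or $[V_i]\leq[K\cap\Gamma]\leq\beta$ for some $i$, in which case we are done.

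So the crux is to rule out $[K\cap\Gamma]=0$ when $\beta\neq0$. Suppose $K\cap\Gamma$ is locally trivial, i.e. $K\cap V\cap\Gamma=\{e\}$ for some compact open $V\leq G$. I would then use the commutator trick that already appears in the proof of Lemma~\ref{2m}: replacing $K$ by the locally equivalent $K\cap V$ and $U$ by $U\cap V\cap N_G(K)$, we have $[K,U\cap\Gamma]\subseteq K\cap\Gamma=\{e\}$ (using that $\Gamma\unlhd G$, so $U\cap\Gamma$ normalises... — more carefully, $[K,U\cap\Gamma]\subseteq K$ since $U$ normalises $K$, and $[K,U\cap\Gamma]\subseteq\Gamma$ since $\Gamma$ is normal, so the commutator lies in $K\cap\Gamma$). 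Hence $K$ centralises the open subgroup $U\cap\Gamma$ of $G$, so $K\leq QC_G(G)$; equivalently every element of $K$ has open centraliser, so $K\leq QZ(G)$. Since $QZ(G)$ contains no element generating a non-trivial compact locally normal subgroup unless... — here I need the hypothesis that $G$ has no non-trivial abelian locally normal subgroup, together with the fact that a non-trivial quasi-central compact locally normal subgroup would, after passing to a suitable open subgroup, contain a non-trivial abelian (even central) locally normal subgroup. Concretely: $K\leq QZ(G)$ and $K$ compact and locally normal; then $K$ itself, or a suitable characteristic subgroup of it, is abelian — one can argue that a compact quasi-central subgroup normalised by an open subgroup has an open center in that open subgroup, producing a non-trivial abelian locally normal subgroup of $G$, contradiction. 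Therefore $K$ is locally trivial, i.e.\ $\beta=0$, contrary to assumption.

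The main obstacle I anticipate is precisely this last reduction: cleanly deducing from ``$K$ is a non-trivial compact locally normal subgroup contained in $QZ(G)$'' that $G$ has a non-trivial abelian locally normal subgroup. The cleanest route is probably to invoke the general theory of locally C-stable groups rather than argue by hand — note that the hypotheses of the lemma (no non-trivial abelian locally normal subgroup) plus the fact that $\Gamma$, being a product of non-virtually-abelian h.j.i.\ groups, has trivial quasi-center (Proposition~5.1 of \cite{BEW11}) can be leveraged: since $G/\Gamma$ is abelian and $\Gamma$ has trivial quasi-center, one shows $QZ(G)\cap\Gamma=\{e\}$, and then a quasi-central locally normal $K$ with $K\cap\Gamma=\{e\}$ maps injectively and $U$-equivariantly into the abelian group $G/\Gamma$, forcing $K$ abelian, hence $K=\{e\}$ by hypothesis — wait, $K$ need only be \emph{locally} trivial; but a locally normal $K$ that is abelian is itself a non-trivial abelian locally normal subgroup unless $K=\{e\}$, which is exactly the excluded case. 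This contradiction finishes the proof. For the ``in particular'' clause: each $[V_i]$ is an atom by Lemma~\ref{h.j.i. structure lattice} since $V_i$ is h.j.i., and conversely any atom $\beta$ satisfies $[V_i]\leq\beta$ for some $i$ by the first part, whence $\beta=[V_i]$ by minimality; so $\{[V_1],\dots,[V_m]\}$ is exactly the set of atoms (these classes being pairwise distinct by Lemma~\ref{2m}).
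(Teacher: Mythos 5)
Your overall strategy coincides with the paper's: split on whether $\beta$ meets $[\Gamma]$ non-trivially, use Lemma~\ref{2m} in the first case, and in the second case shrink $K$ until $K\cap\Gamma=\{e\}$, embed $K$ into the abelian quotient $G/\Gamma$, and invoke the hypothesis that $G$ has no non-trivial abelian locally normal subgroup. Two points need attention.

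First, a genuine gap: you never verify that each $V_i$ is normalised by an open subgroup of $G$. Only the full product $\Gamma=V_1\times\dots\times V_m$ is assumed normal in $G$; conjugation could a priori permute the factors or move $V_i$ elsewhere inside $\Gamma$. Without this, $[V_i]$ is not known to belong to $\mathcal{LN}(G)$ at all, so the conclusion ``$[V_i]\le\beta$'' is not yet a meaningful statement about $\mathcal{LN}(G)$, and your closing appeal to Lemma~\ref{h.j.i. structure lattice} (which requires a \emph{locally normal} h.j.i.\ representative in $G$) is not licensed. The paper's proof opens precisely with this verification: $G$ permutes the finitely many atoms of $\mathcal{LN}(\Gamma)$ provided by Lemma~\ref{2m}, the stabiliser of $[V_i]$ is exactly $N_G(V_i)$ (by the last assertion of that lemma together with the normality of $\Gamma$), and this stabiliser is closed of finite index, hence open. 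You need to supply this step.

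Second, your first route through $QZ(G)$ fails as written: from $[K,U\cap\Gamma]=\{e\}$ you conclude that $K$ ``centralises the open subgroup $U\cap\Gamma$ of $G$'', but $\Gamma$ is not assumed open in $G$ (and in the application inside Theorem~\ref{mainthm} it genuinely is not), so $U\cap\Gamma$ is only open in $\Gamma$ and you obtain no conclusion about $QZ(G)$. Fortunately your fallback --- after shrinking, $K\cap\Gamma=\{e\}$, so $K$ injects into the abelian group $G/\Gamma$, hence $K$ is a non-trivial abelian locally normal subgroup, contradicting the hypothesis --- is correct and is exactly the paper's Case~1; the detour through quasi-centres and $QZ(G)\cap\Gamma$ is unnecessary. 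With the local normality of the $V_i$ established, the remainder of your argument, including the identification of the atoms, goes through.
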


\begin{proof}
We first have to check that $ V_{i} $ has an open normaliser in $ G $. But $ G $ acts on the atoms of $ \mathcal{LN}(\Gamma ) $, which is a finite set by Lemma \ref{2m}. Also, in view of the last assertion of that lemma and the fact that $ \Gamma $ is normal, $ [gV_{i}g^{-1}]=[V_{i}] $ if and only if $ g\in N_{G}(V_{i}) $. Hence, $ N_{G}(V_{i}) $ is of finite index in $ G $. As it is also closed, because $ V_{i} $ is, we conclude that it is open, as wanted.

Now let $ \beta $ be a non-trivial element of $ \mathcal{LN}(G) $ and set $ \alpha_{i} = [V_{i}] $. Recall that by Lemma~\ref{h.j.i. structure lattice}, the $ \alpha_{i} $'s are atoms in $ \mathcal{LN}(G) $. We want to show that $ \beta \geq \alpha_{i} $ for some $ i $, and we now separate the proof into two cases.

\begin{case}
$ \beta \wedge (\alpha_{1}\vee \dots \vee \alpha_{m})=0 $.
\end{case}

As is usual in this situation, we can find $ W $ open compact in $ G $ and some locally normal representative $ K $ (resp. $ U_{i} $) of $ \beta $ (resp. $ \alpha_{i} $) such that $ K $ and the $ U_{i} $ are normal in $ W $. Now, the assumption translates as $ K\cap (U_{1}U_{2}\dots U_{m}) = F $, a finite subgroup. Shrinking $ K $ again if necessary, we may assume that $ K\cap (U_{1}U_{2}\dots U_{m}) = \lbrace e\rbrace $. We may also assume that $U_i \leq V_i$.

Hence, we have continuous injective maps $ K\rightarrow W/(U_{1}U_{2}\dots U_{m})\rightarrow G/\Gamma $. But since the latter is abelian, so is $ K $. In view of the hypothesis, we conclude that $ \beta = 0 $, a contradiction.

\begin{case}
$ \beta \wedge (\alpha_{1}\vee \dots \vee \alpha_{m})\neq 0 $.
\end{case}

Using Lemma \ref{2m}, we conclude that for some $ i $, we have $ \alpha_{i}\leq \beta \wedge (\alpha_{1}\vee \dots \vee \alpha_{m})\leq \beta $, as wanted.
\end{proof}

\subsection{Radical theories}\label{radicaltheories}

In  this section, we review  the definition and basic properties of two characteristic subgroups of a general \tdlc group. The first is the $ [A] $-regular radical $ R_{[A]}(G) $, defined  in \cite{CRW13}, and the second is the elementary radical $ R_{\mathcal{E}}(G) $, defined in \cite{W14} under the assumption that $G$ is second countable. 

\begin{de}
Let $[A]$ be the smallest class of profinite groups, stable under isomorphism, such that the following conditions hold:
\begin{enumerate}[(a)]
\item  $[A]$  contains all abelian profinite groups and all finite simple groups.
\item If $ U\in [A] $ and $ K $ is a closed normal subgroup of $ U $,  then $ K\in [A] $ and $ U/K\in [A] $.
\item Given a profinite group $ U $ that is a (not necessarily direct) product of finitely many closed normal subgroups belonging to $ [A] $, then $ U\in [A] $.
\end{enumerate}

Given a profinite group $U$, a subgroup $K$ of $U$ is called \textbf{$[A]$-regular} in $U$ if for every closed normal subgroup $L$ of $U$ not containing $K$, the image of $K$ in the quotient $U/L$ contains a non-trivial locally normal subgroup of $U/L$ belonging to the class $[A]$.  Given a \tdlc group $G$ and a closed subgroup $H$, we say that $H$ is \textbf{$[A]$-regular} in $G$ if $H \cap U$ is  {$[A]$-regular} in $U$ for all open compact subgroups $U$ of $G$. 
\end{de}

The \textbf{$[A] $-regular radical} of a \tdlc group $G$ is defined to be the characteristic subgroup identified by the following result.

\begin{theorem}\label{A-rad}
Let $G$ be a \tdlc group. Then $G$ has a closed characteristic subgroup $ R_{[A]}(G) $, which is characterised by either of the following properties:
\begin{enumerate}[(i)]
\item $R_{[A]}(G) $ is the largest subgroup of $G$ that is $[A]$-regular. 

\item $R_{[A]}(G) $  is the  smallest closed normal subgroup $ N $ such that $ G/N$ is locally C-stable.

\end{enumerate}
\end{theorem}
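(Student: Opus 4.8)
### Proof proposal for Theorem \ref{A-rad}

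The plan is to construct $R_{[A]}(G)$ directly and then verify that it satisfies both characterisations. First I would establish the local picture: working inside a fixed compact open subgroup $U$, one defines $R_{[A]}(U)$ as the subgroup generated by all $[A]$-regular closed normal subgroups of $U$, and checks (using properties (a)--(c) of the class $[A]$, which guarantee that $[A]$ is closed under the relevant operations) that this is itself $[A]$-regular and is therefore the unique largest $[A]$-regular closed normal subgroup of $U$. A key point here is that $[A]$-regularity is preserved under the join operation: if $K_1,K_2$ are $[A]$-regular in $U$, then for any closed normal $L \trianglelefteq U$ not containing $K_1K_2$, at least one $K_i$ is not contained in $L$, so the image of $K_iL/L$ contains a non-trivial locally normal $[A]$-subgroup of $U/L$, and this lies in the image of $K_1K_2$. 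One should also record that $R_{[A]}(U)$ is characteristic in $U$, since the defining property is invariant under $\mathrm{Aut}(U)$.

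Next I would globalise. Given two compact open subgroups $U, U'$ of $G$, their intersection $U \cap U'$ is open in both, and I would show that $R_{[A]}(U) \cap U' = R_{[A]}(U')\cap U$, i.e. the local radicals are compatible on overlaps. This compatibility, together with the fact that $R_{[A]}(U)$ is normalised by $N_G(U)$, lets one patch the $R_{[A]}(U)$ into a single subgroup $R_{[A]}(G)$ of $G$ with $R_{[A]}(G) \cap U = R_{[A]}(U)$ for every compact open $U$; this subgroup is closed (being closed in each $U$) and characteristic in $G$ (since conjugation carries compact open subgroups to compact open subgroups and intertwines their local radicals). By construction $H \cap U$ is $[A]$-regular in $U$ for every compact open $U$ exactly when $H \cap U \subseteq R_{[A]}(U)$ for all such $U$, which gives characterisation~(i): $R_{[A]}(G)$ is the largest $[A]$-regular closed subgroup of $G$.

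It then remains to identify $R_{[A]}(G)$ with the smallest closed normal $N$ with $G/N$ locally C-stable, which is characterisation~(ii). For one inclusion I would show $G/R_{[A]}(G)$ is locally C-stable: if it had a non-trivial abelian locally normal subgroup or non-trivial quasi-centre, one could pull this back to produce, in some $U/L$, a non-trivial locally normal $[A]$-subgroup not coming from the image of $R_{[A]}(U)$ — contradicting maximality, since abelian profinite groups lie in $[A]$ and the quasi-centre argument reduces to the same via passing to a further quotient. For the reverse, if $N \trianglelefteq G$ is closed with $G/N$ locally C-stable, I would argue $R_{[A]}(G) \subseteq N$: the image of $R_{[A]}(G)$ in $G/N$ is still $[A]$-regular, but a locally C-stable group has no non-trivial $[A]$-regular subgroup (any minimal non-trivial locally normal $[A]$-subgroup would, after descending to an appropriate quotient, be forced to contain an abelian locally normal subgroup, by the structure of the class $[A]$ and an induction on the derived/composition length), hence this image is trivial. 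The main obstacle I anticipate is precisely this last step — showing a locally C-stable group admits no non-trivial $[A]$-regular subgroup — because it requires unwinding the inductive definition of $[A]$ and controlling how locally normal $[A]$-subgroups behave under passage to quotients $U/L$; I would expect to lean here on Proposition~\ref{prop:locCstable} and the lattice-theoretic results of \cite{CRW13} rather than re-deriving everything, and in fact the cleanest route is probably to cite \cite{CRW13} for the existence and both characterisations outright, sketching only the verification that the hypotheses match.
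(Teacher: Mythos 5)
The paper's ``proof'' of Theorem~\ref{A-rad} is a one-line citation to Theorem~6.10 of \cite{CRW13}, so the recommendation in your final sentence --- cite the reference outright --- is exactly what the paper does, and is the right call. Your attempted reconstruction is a reasonable outline of how that theorem is actually proved (local radical in a compact open subgroup, compatibility on overlaps as in Proposition~6.14 of \cite{CRW13}, then the comparison with local C-stability), so as a roadmap it points in the right direction.

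Read as a standalone argument, however, it has two genuine gaps. First, the biconditional ``$H\cap U$ is $[A]$-regular in $U$ exactly when $H\cap U\subseteq R_{[A]}(U)$'' is false in the direction you need: $[A]$-regularity does not pass to arbitrary subgroups. Taking $L=\{1\}$ in the definition, a non-trivial $[A]$-regular subgroup must itself contain a non-trivial locally normal $[A]$-subgroup of $U$, and a small non-trivial subgroup of $R_{[A]}(U)$ need not do so. Hence your patching step does not directly deliver characterisation~(i); what must be shown is that the collection of $[A]$-regular subgroups has a largest element, which is the actual content of the cited theorem. Second, the hard step --- that a locally C-stable group has no non-trivial $[A]$-regular closed normal subgroup --- is where your sketch is weakest: you reduce everything to producing an abelian locally normal subgroup, but the class $[A]$ is generated by abelian profinite groups \emph{and} finite simple groups, and the finite-simple constituents produce elements with open centraliser, i.e.\ a non-trivial quasi-centre, not an abelian locally normal subgroup. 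Both clauses of Definition~\ref{Cstable}(3) are needed, and the induction through the closure properties (b) and (c) of $[A]$ is precisely the part that cannot be waved away. Since the paper does not attempt any of this either, none of this affects the correctness of the paper; it only means your sketch could not replace the citation without substantial further work.
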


\begin{proof}
See Theorem~6.10 in \cite{CRW13}.
\end{proof}
 
We now move on to elementary groups following \cite{W14} and \cite{W14c}. We first restrict to \tdlc groups that are second countable (\tdlcsc for short).

\begin{de}
The class $ \mathcal{E}_{\mathrm{sc}}$ is defined as the smallest class of \tdlcsc groups such that
\begin{enumerate}[(a)]
\item $ \mathcal{E}_{\mathrm{sc}}$ contains all metrisable profinite groups and all countable discrete groups.
\item $ \mathcal{E}_{\mathrm{sc}}$ is closed under taking group extensions. 
\item $\mathcal{E}_{\mathrm{sc}} $ is closed under countable directed unions of open subgroups.
\end{enumerate}
\end{de}

A key feature, due to Ph.~Wesolek, is the existence of a radical belonging to the class $\mathcal{E}_{\mathrm{sc}}$, asserted in the following.

\begin{prop}\label{elementaryradicalexistence}
Let $G$ be a \tdlcsc group. Then $G$ has a largest   closed normal subgroup $\Rad_{\mathcal{E}_{\mathrm{sc}}}(G) $ which belongs to the class 
$\mathcal{E}_{\mathrm{sc}}$. 
\end{prop}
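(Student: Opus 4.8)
The plan is to construct the radical directly as the closed subgroup generated by all closed normal $\mathcal{E}_{\mathrm{sc}}$-subgroups, and then show this subgroup is itself elementary by a countability-plus-transfinite-exhaustion argument. First I would record the basic permanence properties of $\mathcal{E}_{\mathrm{sc}}$ that follow formally from the defining closure conditions: the class is closed under passing to closed subgroups and to Hausdorff quotients (this needs a short induction on the construction rank, using that an extension of elementary by elementary is elementary and that the image and kernel of a continuous surjection interact well with extensions and directed unions), and crucially that if $N \trianglelefteq G$ is closed with both $N$ and $G/N$ elementary then $G$ is elementary (this is just closure under extensions). I would also note that a \tdlcsc group which is the closure of an increasing union of open elementary subgroups is elementary (closure condition (c)), and more generally that the closure of a directed union of closed normal elementary subgroups, if it happens to be realized as a countable such union, is elementary.

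Next I would define $\Rad_{\mathcal{E}_{\mathrm{sc}}}(G)$ to be the closure of the subgroup generated by all closed normal subgroups of $G$ lying in $\mathcal{E}_{\mathrm{sc}}$; this is manifestly a closed normal (indeed characteristic) subgroup, and it contains every closed normal elementary subgroup, so the only thing to prove is that it itself belongs to $\mathcal{E}_{\mathrm{sc}}$. The key step is to show that the join of two closed normal elementary subgroups $M$ and $N$ is again elementary: indeed $\overline{MN}/N$ is a Hausdorff quotient of $M$ (it is the closure of the image of $M$), hence elementary by the permanence property, while $N$ is elementary, so $\overline{MN}$ is an extension of one elementary group by another and is therefore elementary. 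Thus the family of closed normal elementary subgroups is directed under inclusion (via $M, N \mapsto \overline{MN}$).

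The main obstacle is that this directed family need not be countable, so I cannot directly apply closure condition (c), which only permits \emph{countable} directed unions. To handle this I would exploit second countability: fix a countable basis for $G$, and observe that the closure $R := \Rad_{\mathcal{E}_{\mathrm{sc}}}(G)$ of the union of the directed family is a \tdlcsc group which is the union of countably many basic open sets, each of which, being covered by the directed family, lies inside a single member of the family; hence $R$ is already the closure of a \emph{countable} increasing union $M_1 \subseteq M_2 \subseteq \cdots$ of closed normal elementary subgroups of $G$. Now I would run a transfinite/iterative construction or, more simply, argue that $\overline{\bigcup_n M_n}$ is elementary: pick compact open $U \leq R$, set $R_n = \overline{M_n U}$, note each $R_n$ is open in $R$ and elementary (extension of the profinite $U/(U\cap M_n)$-type quotient — more carefully, $R_n$ is an extension of $M_n$ by the profinite group $R_n/M_n$, both elementary), the $R_n$ are increasing with open union $R$, and condition (c) applies to the countable directed union $\bigcup_n R_n = R$. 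This shows $R \in \mathcal{E}_{\mathrm{sc}}$, completing the proof; uniqueness and maximality are immediate since any closed normal elementary subgroup is contained in $R$ by construction.
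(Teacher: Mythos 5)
Your overall architecture is sound and in fact mirrors the argument the paper gives for the general (non--second-countable) analogue, Theorem~\ref{starelementaryradicalexistence}: take the family of closed normal elementary subgroups, show it is directed under $M,N\mapsto \overline{MN}$, and absorb a compact open subgroup $U$ so that $\overline{\bigcup_n M_n}=\bigcup_n M_nU$ becomes a directed union of \emph{open} elementary subgroups. (For the second-countable statement itself the paper simply cites Wesolek.) However, two of your individual steps fail as written. First, $\overline{MN}/N$ is \emph{not} a Hausdorff quotient of $M$: the image $MN/N\cong M/(M\cap N)$ is in general a proper dense non-closed subgroup of $\overline{MN}/N$, carrying a possibly coarser topology than the quotient topology, so no permanence under quotients (or closed subgroups) gets you from ``$M$ is elementary'' to ``$\overline{MN}/N$ is elementary.'' What is actually needed is the statement that a \tdlcsc group containing a dense normal subgroup which is the continuous injective image of an elementary group is itself elementary --- this is Proposition~\ref{prop:transferfroms.c.togeneral}(i), resting on Theorem~1.4 of \cite{W14}, and it is a substantive theorem, not a formal consequence of the defining closure properties. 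Your proof as written silently replaces this hard input with an incorrect easy one.

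Second, your countability reduction is broken: the members $M_\alpha$ of the directed family are closed subgroups that will typically have empty interior in $R$, and their union is merely dense in $R$, so it is false that each basic open subset of $R$ ``is covered by the directed family'' or lies inside a single member. The conclusion you want is nevertheless true and is recovered by separability rather than by a covering argument: $R$ is second countable, hence the dense subgroup $\bigcup_\alpha M_\alpha$ contains a countable subset $\{x_k\}$ dense in $R$; each $x_k$ lies in some $M_{\alpha_k}$, and directedness then yields an increasing sequence $M_1\subseteq M_2\subseteq\cdots$ with $\overline{\bigcup_n M_n}=R$. With that repair, and with the dense-normal-image theorem correctly invoked for the directedness step, your final $M_nU$ argument goes through and the proof is complete. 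You should also be aware that the permanence of $\mathcal{E}_{\mathrm{sc}}$ under closed subgroups and quotients, which you describe as following ``formally'' by induction on construction rank, is itself a nontrivial result of Wesolek that you are importing.
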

\begin{proof}
See  \cite{W14}, Proposition 7.4.
\end{proof}

The relation between the two radicals introduced above is elucidated by the following. 

\begin{prop}\label{thm:radicals}
Let $ G $ be a \tdlcsc group. Then $  R_{[A]}(G)\leq \Rad_{\mathcal{E}_{\mathrm{sc}}}(G) $. In particular, $ G/\Rad_{\mathcal{E}_{\mathrm{sc}}}(G) $ is locally C-stable, and $ R_{[A]}(G) $belongs to   $\mathcal{E}_{\mathrm{sc}}$. 
\end{prop}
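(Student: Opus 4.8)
The crux of the proof is to establish that $R_{[A]}(G)$ itself belongs to $\mathcal{E}_{\mathrm{sc}}$; granting this, the displayed inequality and both ``in particular'' clauses follow formally. Indeed, by Proposition~\ref{elementaryradicalexistence} the subgroup $\Rad_{\mathcal{E}_{\mathrm{sc}}}(G)$ is the \emph{largest} closed normal subgroup of $G$ lying in $\mathcal{E}_{\mathrm{sc}}$, so once the closed characteristic subgroup $R_{[A]}(G)$ is known to be elementary it is automatically contained in $\Rad_{\mathcal{E}_{\mathrm{sc}}}(G)$. The quotient $G/\Rad_{\mathcal{E}_{\mathrm{sc}}}(G)$ is then a quotient of $G/R_{[A]}(G)$, which is locally C-stable by Theorem~\ref{A-rad}(ii); since local C-stability is inherited by quotients modulo closed normal subgroups (a consequence of the functoriality of $R_{[A]}$ under quotient maps, see \cite{CRW13}), the first clause follows. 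For the second clause, $R_{[A]}(G)$ is a closed subgroup of the group $\Rad_{\mathcal{E}_{\mathrm{sc}}}(G) \in \mathcal{E}_{\mathrm{sc}}$, and $\mathcal{E}_{\mathrm{sc}}$ is stable under passing to closed subgroups by \cite{W14}.

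To prove that $R := R_{[A]}(G)$ lies in $\mathcal{E}_{\mathrm{sc}}$, I would first reduce to the compactly generated case. As a closed subgroup of a \tdlcsc group, $R$ is itself \tdlcsc, hence a countable increasing union $R = \bigcup_{n} R_{n}$ of compactly generated open subgroups. Being $[A]$-regular is a local property, tested only on compact open subgroups of the ambient group; consequently each $R_{n}$, being open in $R$, remains $[A]$-regular, so that $R_{[A]}(R_{n}) = R_{n}$. Since $\mathcal{E}_{\mathrm{sc}}$ is stable under countable increasing unions of open subgroups, it suffices to prove that each $R_{n}$ is elementary, so from now on we may assume that $R$ is compactly generated and satisfies $R = R_{[A]}(R)$.

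The remaining step, which I expect to be the main obstacle, is to show that a compactly generated \tdlcsc group $R$ with $R = R_{[A]}(R)$ is elementary. The plan is to invoke the structure theory of compactly generated \tdlcsc groups (\cite{CRW13}, \cite{CRW14}, together with Wesolek's work): $R$ admits a finite normal series whose successive quotients are compact, discrete, or chief factors. Compact and discrete groups are elementary and $\mathcal{E}_{\mathrm{sc}}$ is stable under extensions, so it is enough to show that every chief factor $K$ of $R$ is elementary. By functoriality of $R_{[A]}$ under quotients, the identity $R = R_{[A]}(R)$ forces each quotient of $R$ to coincide with its own $[A]$-regular radical, and by the permanence of $[A]$-regularity under the subquotients occurring in a chief series (\cite{CRW13}), this property descends to $K$; thus $K = R_{[A]}(K)$, which by Theorem~\ref{A-rad}(ii) means that $K$ has no non-trivial locally C-stable quotient. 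On the other hand, if $K$ were not elementary it would be topologically characteristically simple, non-compact and non-discrete; since $R$ is compactly generated, the topologically simple group associated to $K$ would belong to the class $\mathscr S$ of non-discrete compactly generated topologically simple \tdlcsc groups, hence be locally C-stable by \cite{CRW14}, yielding a non-trivial locally C-stable subquotient of $K$ and hence (tracing it back through the chief factor) a non-trivial locally C-stable quotient of $K$ --- a contradiction. Therefore all chief factors of $R$ are elementary, and so is $R$. The delicate points to be filled in are exactly the precise descent of the identity $R = R_{[A]}(R)$ to the chief factors of $R$ and the passage between ``locally C-stable subquotient'' and ``locally C-stable quotient'' of a chief factor, for which the classification of chief factors in \cite{CRW13} and \cite{CRW14} and the permanence properties of $R_{[A]}$ must be used with care.
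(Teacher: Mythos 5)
The paper does not actually prove this proposition from scratch: it is imported wholesale from Wesolek (\cite{W14}, Corollaries 9.12 and 9.13). Your overall plan --- show that $R_{[A]}(G)$ is elementary, deduce the containment from the maximality of $\Rad_{\mathcal{E}_{\mathrm{sc}}}(G)$, reduce to the compactly generated case via the compatibility of $R_{[A]}$ with open subgroups, and then invoke a decomposition theorem for compactly generated groups --- has the right shape and is broadly in the spirit of Wesolek's argument. However, two of your steps are genuinely defective. The first is your claim that local C-stability is inherited by quotients modulo closed normal subgroups: this is false, and Example~\ref{ex} of this very paper is a counterexample, since $G=U\rtimes L$ with $L$ infinite discrete is shown there to be locally C-stable (indeed $R_{[A]}(G)=1$), while $G/U\cong L$ is an infinite discrete group whose quasi-centre is all of $L$. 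The correct way to get the clause ``$G/\Rad_{\mathcal{E}_{\mathrm{sc}}}(G)$ is locally C-stable'' is to apply the containment $R_{[A]}\leq \Rad_{\mathcal{E}_{\mathrm{sc}}}$ to the quotient $Q=G/\Rad_{\mathcal{E}_{\mathrm{sc}}}(G)$ itself: $R_{[A]}(Q)$ is elementary, so its preimage in $G$ is an elementary-by-elementary extension, hence elementary, hence contained in $\Rad_{\mathcal{E}_{\mathrm{sc}}}(G)$; therefore $R_{[A]}(Q)=1$ and $Q$ is locally C-stable by Theorem~\ref{A-rad}(ii).

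The second and more serious problem is that the core step --- a compactly generated \tdlcsc group equal to its own $[A]$-regular radical is elementary --- is only sketched, and the sketch leans on machinery (an essentially chief series for compactly generated groups, and the association of a group in $\mathscr S$ to each non-elementary chief factor) that is at least as deep as the result being proved and is not available in the references used by this paper. Moreover the decisive move in your contradiction, upgrading a ``locally C-stable subquotient'' of a chief factor $K$ to a non-trivial locally C-stable \emph{quotient} of $K$, is asserted rather than proved: a non-elementary characteristically simple group is in general only a quasi-product of topologically simple groups, and it is not clear that it admits any non-trivial locally C-stable Hausdorff quotient. This is exactly where the substance of Wesolek's Corollary 9.12 lies (his proof runs through the structure theorem decomposing a compactly generated \tdlcsc group into an elementary part, a quasi-product of finitely many non-elementary topologically simple groups, and an elementary top, and shows the $[A]$-regular radical must miss the simple pieces). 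As written, your proposal does not close this gap, and you acknowledge as much; so the argument is incomplete at its central point.
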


\begin{proof}
See \cite{W14}, Corollary 9.12 and 9.13.
\end{proof}

We now briefly explain how one can drop the second countability assumption, following \cite{W14c}. This discussion was suggested to us by Ph.~Wesolek.  

\begin{de}
The class of \textbf{elementary groups} is the smallest class $ \mathcal{E}$ of \tdlc groups such that
\begin{enumerate}[(a)]
\item $ \mathcal{E}$ contains all profinite groups and all discrete groups.
\item $ \mathcal{E}$ is closed under taking group extensions.
\item $ \mathcal{E}$ is closed under directed unions of open subgroups.
\end{enumerate}
\end{de}

It should be stressed that our choice of terminology is slightly different from Wesolek's: what he called the class of \emph{elementary groups} and denoted by $\mathcal E$  in the references \cite{W14} and \cite{W14c}  and   what is denoted by $\mathcal{E}_{\mathrm{sc}}$ in the present paper. Moreover, the class denoted here by $\mathcal E$ is denoted by $\mathcal E^\ast $ in  \cite{W14c}. We believe that our choice is   natural in the present context, and should not cause any confusion.

The inclusion $\mathcal{E}_{\mathrm{sc}}  \subset \mathcal{E} $ is clear from the definitions. Conversely, we have the following.

\begin{lem}\label{lem:reducingtos.c.case}
Let $ G\in \mathcal{E} $. If $ G $ is secound countable, then $ G\in \mathcal{E}_{\mathrm{sc}} $.
\end{lem}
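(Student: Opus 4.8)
The plan is to prove $\mathcal{E}_{\mathrm{sc}} \subseteq \mathcal{E}$ is already clear, so the content is the converse direction under the second-countability hypothesis: we must show that a second countable group built from the three elementary constructions over the \emph{general} (possibly non-second-countable) building blocks can in fact be rebuilt from second countable building blocks. The natural strategy is induction on the construction of $\mathcal{E}$. Since $\mathcal{E}$ is defined as the smallest class closed under the three operations, it suffices to show that the class
\[
\mathcal{C} := \{\, G \in \mathcal{E} \ :\ G \text{ second countable} \implies G \in \mathcal{E}_{\mathrm{sc}} \,\}
\]
contains all profinite groups and all discrete groups, and is closed under the two relevant closure operations — but with the crucial caveat that the operations must be applied \emph{within} the second countable world. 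So I would rather argue as follows: let $\mathcal{D}$ be the class of second countable groups lying in $\mathcal{E}$; it is enough to prove $\mathcal{D} \subseteq \mathcal{E}_{\mathrm{sc}}$, and for this I would show that $\mathcal{D}$ is obtained from second countable profinite and discrete groups by \emph{countable} applications of extensions and \emph{countable} directed unions of open subgroups, each of which stays second countable, hence lands in $\mathcal{E}_{\mathrm{sc}}$ by its defining properties.

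The key reduction steps, in order, are: (1) A second countable profinite group is metrisable, and a second countable discrete group is countable, so the base cases of $\mathcal{E}_{\mathrm{sc}}$ are exactly the second countable base cases of $\mathcal{E}$. (2) If $G$ is a second countable group arising as an extension $1 \to N \to G \to Q \to 1$ with $N, Q \in \mathcal{E}$, then both $N$ (as a closed subgroup of $G$) and $Q$ (as a continuous Hausdorff quotient of $G$) are second countable; by induction $N, Q \in \mathcal{E}_{\mathrm{sc}}$, and then $G \in \mathcal{E}_{\mathrm{sc}}$ by closure of $\mathcal{E}_{\mathrm{sc}}$ under group extensions. (3) If $G = \bigcup_{i} O_i$ is a directed union of open subgroups $O_i \in \mathcal{E}$ and $G$ is second countable, then each $O_i$ is open hence second countable, so $O_i \in \mathcal{E}_{\mathrm{sc}}$ by induction; moreover a second countable group has only countably many open subgroups (equivalently, the Baire/Lindelöf property lets us extract a countable cofinal subfamily from the directed system), so $G$ is a \emph{countable} directed union of groups in $\mathcal{E}_{\mathrm{sc}}$, hence $G \in \mathcal{E}_{\mathrm{sc}}$. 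The induction is on the "rank" or construction-stage of $G$ in the generation of $\mathcal{E}$, which is legitimate since $\mathcal{E}$ is defined as a least fixed point.

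The main obstacle is step (3): extracting a \emph{countable} cofinal subfamily from a possibly uncountable directed system $\{O_i\}$ of open subgroups whose union is second countable. The point to nail down is that a second countable topological group has at most countably many open subgroups — this follows because such a group is Lindelöf (indeed $\sigma$-compact in the \tdlc case, or more simply hereditarily Lindelöf), so any cover by open subgroups, in particular the cover $\{O_i\}$ of $G$, admits a countable subcover; by directedness the corresponding finitely-generated "joins" already form a countable cofinal chain $O_{i_1} \leq O_{i_2} \leq \cdots$ with $\bigcup_n O_{i_n} = G$. A secondary subtlety, worth a sentence, is the formal setup of the induction: one should phrase it as "every $G \in \mathcal{E}$ admits a well-founded derivation tree from the base cases via the two operations, and we induct on that tree," so that in the extension and union cases the subgroups/quotients/members to which we apply the inductive hypothesis genuinely sit at strictly lower stages. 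With these two points in place the argument is routine.
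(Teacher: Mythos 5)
Your argument is correct, but it takes a different route from the paper: the paper does not prove this lemma at all, it simply cites Proposition~4.3 of Wesolek's note \cite{W14c}. What you have written is essentially a self-contained proof of that proposition by induction on a well-founded derivation of $G$ in $\mathcal{E}$ (the ``construction rank'' in Wesolek's terminology), and all three steps go through: second countable profinite groups are metrisable and second countable discrete groups are countable; in the extension case the closed normal subgroup and the (open, hence second countable) Hausdorff quotient inherit second countability and one applies the inductive hypothesis to the specific $N$ and $G/N$ appearing in the derivation; and in the directed-union case the Lindel\"of property of a second countable space yields a countable subcover from the cover $\{O_i\}$, which by directedness can be upgraded to a countable increasing cofinal chain inside the original system, reducing to the countable-union clause of $\mathcal{E}_{\mathrm{sc}}$. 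This buys a reader a proof where the paper offers only a pointer. One correction, though: your parenthetical claim that ``a second countable topological group has only countably many open subgroups'' is false and is not equivalent to the Lindel\"of extraction --- for instance the countable discrete group $\bigoplus_{\mathbb{N}}\mathbf{Z}/2\mathbf{Z}$ has uncountably many (open) subgroups. Fortunately you do not need that claim: the Lindel\"of argument you spell out in the final paragraph is the correct one and is all that is required, so you should simply delete the false statement and keep the countable-subcover-plus-directedness argument.
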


\begin{proof}
This is a particular case of \cite{W14c}, Proposition 4.3.
\end{proof}

This lemma allows us to deduce that many properties of $\mathcal{E}_{\mathrm{sc}} $ generalise to $\mathcal{E} $, as follows (see also Theorem~\ref{thm:starradicals} below).

\begin{prop}\label{prop:transferfroms.c.togeneral}
Let $G$ be a \tdlc group. 
\begin{enumerate}[(i)]
\item Let $ H $ be a dense normal subgroup of   $ G $ such that $ H\in \mathcal{E}  $. Then $ G\in \mathcal{E} $.
\item If $ G $ is   locally solvable, then $ G  \in \mathcal{E}$.
\end{enumerate}
\end{prop}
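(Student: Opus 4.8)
The plan is to reduce both assertions to the case where $G$ is second countable by means of two standard devices, and then to derive (ii) from the $[A]$-regular radical together with an elementary fact about solvable profinite groups, while (i) will be imported from Wesolek's work on $\mathcal{E}_{\mathrm{sc}}$. For the reduction: since $\mathcal{E}$ is stable under directed unions of open subgroups and every \tdlc group is the directed union of its compactly generated --- hence $\sigma$-compact --- open subgroups, we may assume that $G$ is $\sigma$-compact; indeed an open subgroup $O$ of $G$ is again locally solvable in case~(ii), and in case~(i) the subgroup $O\cap H$ is dense and normal in $O$ and, being open in $H$, lies in $\mathcal{E}$ (open subgroups of elementary groups are elementary, see \cite{W14c}). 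Applying the Kakutani--Kodaira theorem to the $\sigma$-compact group $G$ yields a compact --- hence profinite --- normal subgroup $K\unlhd G$ with $G/K$ second countable; since $K\in\mathcal{E}$ and $\mathcal{E}$ is closed under extensions, it suffices to prove $G/K\in\mathcal{E}$, and $G/K$ inherits the hypothesis, the relevant dense normal subgroup in case~(i) being the image of $H$, which is a Hausdorff quotient of $H$ and so again lies in $\mathcal{E}$ (\cite{W14c}). We may thus assume that $G$ is \tdlcsc.

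For (ii), put $R=R_{[A]}(G)$. By Theorem~\ref{A-rad}(ii) the quotient $\bar G=G/R$ is locally C-stable, and it is locally solvable because $G$ is. I claim $\bar G$ is discrete. Otherwise $\bar G$ has a non-trivial compact open subgroup $V$; contained in a solvable open subgroup of $\bar G$, the group $V$ is abstractly solvable, hence its derived series reaches $\{1\}$; if $V^{(k)}$ is its last non-trivial term, then the closure $\overline{V^{(k)}}$ in $V$ is a non-trivial abelian subgroup normalised by the open subgroup $V$, i.e.\ a non-trivial abelian locally normal subgroup of $\bar G$, contradicting local C-stability. So $\bar G$ is discrete, hence $\bar G\in\mathcal{E}$; and $R\in\mathcal{E}_{\mathrm{sc}}\subseteq\mathcal{E}$ by Proposition~\ref{thm:radicals}. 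As $\mathcal{E}$ is closed under extensions, $G\in\mathcal{E}$.

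For (i), it remains to show that a \tdlcsc group $G$ with a dense normal subgroup $H\in\mathcal{E}$ lies in $\mathcal{E}$; this is due to Wesolek (see \cite{W14,W14c}). The argument can be organised around the radical $N=\Rad_{\mathcal{E}_{\mathrm{sc}}}(G)$ of Proposition~\ref{elementaryradicalexistence}: if $N\neq G$ then $\overline{HN/N}=G/N$ is non-trivial, so $HN/N$ is a non-trivial dense normal subgroup of $G/N$ that, being a Hausdorff quotient of $H$, is elementary; one then contradicts the maximality of $N$ by showing --- via the decomposition rank of \cite{W14} --- that a \tdlcsc group carrying a non-trivial dense normal elementary subgroup has non-trivial $\mathcal{E}_{\mathrm{sc}}$-radical. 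I expect the main obstacle to be precisely this last step, and more broadly the fact that the topology carried by $H$ is in general strictly finer than the one induced from $G$ (a locally compact subgroup being closed): $H$ cannot be manipulated naively as a topological subgroup of $G$, which is why the passages to quotients above must use the intrinsic topology of $H$, and why part~(i) genuinely requires the $\mathcal{E}_{\mathrm{sc}}$-machinery rather than just the radical facts used for~(ii).
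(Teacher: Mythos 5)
Your reduction to the second countable case is exactly the paper's: write $G$ as a directed union of compactly generated (hence $\sigma$-compact) open subgroups, note that the hypotheses pass to these and to their Kakutani--Kodaira quotients by compact normal subgroups, and use that $\mathcal{E}$ is closed under directed unions of open subgroups and under extensions (by the profinite kernel). For part~(i) you and the paper then both terminate in the same citation, namely Wesolek's theorem that a \tdlcsc group with a dense normal elementary subgroup is elementary (\cite[Theorem~1.4]{W14}); your sketch of how that theorem might be proved via the $\mathcal{E}_{\mathrm{sc}}$-radical is incomplete, as you acknowledge, but since the paper also treats this as a black box the citation is all that is needed, and your closing remark about the topology on $H$ being finer than the induced one correctly identifies the subtlety both arguments are implicitly relying on. Where you genuinely diverge is part~(ii): the paper simply cites \cite[Theorem~8.1]{W14} (locally solvable \tdlcsc groups are elementary), whereas you rederive this from Theorem~\ref{A-rad} and Proposition~\ref{thm:radicals} by observing that $G/R_{[A]}(G)$ is locally C-stable and locally solvable, hence discrete --- since otherwise a nontrivial compact open subgroup $V$ inside a solvable open subgroup would yield the nontrivial abelian locally normal subgroup $\overline{V^{(k)}}$, contradicting local C-stability --- and then using that $R_{[A]}(G)\in\mathcal{E}_{\mathrm{sc}}$ together with extension closure. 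This argument is correct and has the virtue of staying within the radical machinery already set up in Section~\ref{radicaltheories}; its only cost is that it still leans on the imported fact $R_{[A]}(G)\leq\Rad_{\mathcal{E}_{\mathrm{sc}}}(G)$, so the dependence on \cite{W14} is relocated rather than removed.
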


\begin{proof}
Write $ G = \bigcup \limits_{i\in I} O_{i} $ as a directed union of compactly generated   open subgroups. By \cite{KK44}, for each $ i \in I $, there exists a compact normal subgroup $ K_{i}\leq O_{i} $ such that $ O_{i}/K_{i} $ is a \tdlcsc group.

\begin{enumerate}[(i)]
\item  Let now $ H $ be a dense normal subgroup of   $ G $ such that $ H\in \mathcal{E}$. Then $ (O_{i}\cap H)K_{i}/K_{i} $ is a dense normal subgroup of  $ O_{i}/K_{i} $. Moreover, by Lemma~\ref{lem:reducingtos.c.case}, it belongs to $ \mathcal{E}_{\mathrm{sc}}$. In the second countable case, the desired result is known, namely  $ O_{i}/K_{i} \in \mathcal{E}_{\mathrm{sc}}$ by  \cite[Theorem 1.4]{W14}. Hence, for each $ i $, the group $ O_{i} $ is compact-by-elementary, hence  elementary. Therefore $ G $ is  itself   elementary, as required. 

\item For each $i$, the groups $O_i$ and $O_i/K_i$ are locally solvable.  By \cite[Theorem 8.1]{W14} we have  $O_i/K_i \in \mathcal{E}_{\mathrm{sc}}$. Therefore $O_i$ is compact-by-elementary, and we conclude as in the proof of (i). \qedhere
\end{enumerate}
\end{proof}



The \textbf{elementary radical} is defined to be the characteristic subgroup identified by the following result, which is a straightforward adaptation of \cite[Proposition 7.4]{W14}. 

\begin{theorem}\label{starelementaryradicalexistence}
Let $G$ be a \tdlc group. Then $G$ has a largest closed normal subgroup $\Rad_{\mathcal{E}}(G) $ which is   elementary.
\end{theorem}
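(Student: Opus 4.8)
The plan is to mimic the proof of \cite[Proposition~7.4]{W14}, replacing every appeal to a closure property of $\mathcal{E}_{\mathrm{sc}}$ by the corresponding property of $\mathcal{E}$. First one records the permanence properties of $\mathcal{E}$ that enter the argument: beyond the two built into the definition (closure under extensions and under directed unions of open subgroups), the class $\mathcal{E}$ is closed under passing to closed subgroups and to Hausdorff quotients by closed normal subgroups. These hold by the results of \cite{W14c}; alternatively, they follow from the second-countable versions in \cite{W14} by exactly the reduction used in the proof of Proposition~\ref{prop:transferfroms.c.togeneral} — write the group as a directed union of compactly generated open subgroups $O_i$, pick via \cite{KK44} compact normal $K_i \trianglelefteq O_i$ with $O_i/K_i$ a \tdlcsc{} group, transfer the property through $O_i/K_i$ using Lemma~\ref{lem:reducingtos.c.case}, and deduce that each $O_i$, hence the whole group, lies in $\mathcal{E}$. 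We shall also use Proposition~\ref{prop:transferfroms.c.togeneral}(i).

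Let $\mathcal{N}$ be the set of closed normal subgroups of $G$ belonging to $\mathcal{E}$; it is non-empty, as $\{1\} \in \mathcal{N}$. The key point is that $\mathcal{N}$ is directed upward: given $M, N \in \mathcal{N}$, the closure $\overline{MN}$ again lies in $\mathcal{N}$. Indeed $\overline{MN}$ is a closed normal subgroup of $G$ and $N$ is a closed normal subgroup of it belonging to $\mathcal{E}$. The quotient $Q = \overline{MN}/N$ admits the image of $M$ under the (continuous, open) projection as a dense normal subgroup; after reducing to $\sigma$-compact open pieces of $M$ one identifies the topology of this image with that of a quotient of a piece of $M$, so that this dense normal subgroup lies in $\mathcal{E}$, whence $Q \in \mathcal{E}$ by Proposition~\ref{prop:transferfroms.c.togeneral}(i). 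Being an extension of $N \in \mathcal{E}$ by $Q \in \mathcal{E}$, the group $\overline{MN}$ lies in $\mathcal{E}$.

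Set $R = \overline{\bigcup_{N \in \mathcal{N}} N}$. Since $\mathcal{N}$ is directed, $R_0 := \bigcup_{N \in \mathcal{N}} N$ is already a subgroup; it is normal in $G$, dense in $R$, and $R$ is a closed normal subgroup of $G$. To see that $R \in \mathcal{E}$ one argues as in \cite[Proposition~7.4]{W14}: writing $R$ as a directed union of compactly generated open subgroups $O_j$ and observing that $O_j \cap R_0$ is a dense normal subgroup of $O_j$, one reduces, by modding out suitable compact normal subgroups of the $O_j$, to the second-countable situation, where the elementary radical is known to exist (Proposition~\ref{elementaryradicalexistence}); Proposition~\ref{prop:transferfroms.c.togeneral}(i) and the directed-union axiom then propagate membership in $\mathcal{E}$ back up to $R$. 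By construction $R$ contains every closed normal subgroup of $G$ that lies in $\mathcal{E}$, so it is the desired largest one; being largest, it is automatically characteristic, since $\operatorname{Aut}(G)$ permutes $\mathcal{N}$. We set $\Rad_{\mathcal{E}}(G) := R$.

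The step I expect to be the main obstacle is the last one — rigorously propagating membership in $\mathcal{E}$ from the \tdlcsc{} quotients of the compactly generated open subgroups back to $R$ (and, inside the directedness argument, the identification of the subspace topology on the dense image of $M$ in $Q$). This bookkeeping between $G$, its compactly generated open subgroups, and their \tdlcsc{} quotients is precisely the content of the ``straightforward adaptation'', and it rests essentially on Lemma~\ref{lem:reducingtos.c.case} together with \cite{KK44}.
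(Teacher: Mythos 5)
Your overall strategy is close to the paper's, but the two arguments diverge at the crucial step, and your version of that step has a gap as written. The paper runs Zorn's lemma on chains and disposes of the closure of a union $\bigcup_i N_i$ by a one-line trick you do not use: fix a compact open subgroup $U \leq G$; each $N_iU$ is elementary (an extension of $N_i$ by the profinite group $N_iU/N_i \cong U/(U\cap N_i)$) and open, so $\bigcup_i N_iU$ is a directed union of \emph{open} elementary subgroups, hence elementary, and $\overline{\bigcup_i N_i}$ is a closed subgroup of it, hence elementary by \cite[Theorem~4.6(b)]{W14c}. Uniqueness of the maximal element is then obtained exactly as in your directedness claim, via Proposition~\ref{prop:transferfroms.c.togeneral}(i); so that part of your proposal matches the paper. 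Your route --- prove $\mathcal N$ is directed first and then set $R = \overline{\bigcup_{N\in\mathcal N}N}$ --- avoids Zorn but obliges you to prove $R\in\mathcal E$ directly, and here your sketch does not go through as stated: $O_j\cap R_0 = \bigcup_{N}(O_j\cap N)$ is a directed union of subgroups that are \emph{not open} in $O_j\cap R_0$, so neither the directed-union axiom nor Proposition~\ref{prop:transferfroms.c.togeneral}(i) applies directly (the latter requires the dense normal subgroup to already be known elementary, which is precisely what is at stake). Two repairs are available: either the $U$-saturation trick above, noting $R \leq R_0U=\bigcup_N NU$, which is a directed union of open elementary subgroups and is open, hence closed, in $G$; or, pursuing your reduction, observe that the image of each $O_j\cap N$ in the \tdlcsc{} quotient $O_j/K_j$ is a \emph{closed} normal elementary subgroup (since $(O_j\cap N)K_j$ is closed), hence lies in $\Rad_{\mathcal{E}_{\mathrm{sc}}}(O_j/K_j)$; the union of these images is dense, so the closed radical is all of $O_j/K_j$, whence $O_j$ is compact-by-elementary and $R=\bigcup_j O_j$ is elementary. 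With either patch your argument closes, so this is a repairable gap in an otherwise correct and slightly different (Zorn-free) approach, not a wrong one.
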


\begin{proof}
Let $N_i$ be an ascending chain of closed normal subgroups of $G$ that are elementary. Let $U< G$ be a compact open subgroup. Then $N_i U$ is elementary for each $i$, hence so is the union $O = \bigcup_i N_i U$. It follows that $\overline{\bigcup_i N_i } \leq O$ is elementary, since $\mathcal E$ is closed under taking closed subgroups in view of  \cite[Theorem~4.6(b)]{W14c}. It follows from Zorn's lemma that the collection of elementary closed normal subgroups of $G$ has maximal elements. In fact there is a unique such,  since the closure of the product of any two of them is itself elementary by  Proposition~\ref{prop:transferfroms.c.togeneral}(i). 
\end{proof}

Finally, we extend Theorem~\ref{thm:radicals} to the general case. 

\begin{theorem}\label{thm:starradicals}
Let $ G $ be a \tdlc group. Then $  R_{[A]}(G)\leq \Rad_{\mathcal{E}}(G) $. In particular, $ G/\Rad_{\mathcal{E}}(G) $ is locally C-stable, and $ R_{[A]}(G) $ is elementary.
\end{theorem}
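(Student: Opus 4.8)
The plan is to reduce the statement to the second countable case (Proposition~\ref{thm:radicals}), after first observing that \emph{both} conclusions follow once we know that $R_{[A]}(G)$ is elementary for an arbitrary \tdlc group $G$. Indeed, $R_{[A]}(G)$ is a closed normal subgroup of $G$, so if it is elementary then $R_{[A]}(G)\leq\Rad_{\mathcal E}(G)$ by the maximality in Theorem~\ref{starelementaryradicalexistence}. Moreover, applying the elementarity assertion to the quotient $Q=G/\Rad_{\mathcal E}(G)$ (with projection $\pi\colon G\to Q$), the preimage $M=\pi^{-1}(R_{[A]}(Q))$ has the closed normal subgroup $\Rad_{\mathcal E}(G)\in\mathcal E$ with quotient $R_{[A]}(Q)\in\mathcal E$, hence $M\in\mathcal E$ since $\mathcal E$ is closed under extensions; by maximality $M\leq\Rad_{\mathcal E}(G)$, so $M=\Rad_{\mathcal E}(G)$ and $R_{[A]}(Q)$ is trivial, which by Theorem~\ref{A-rad}(ii) says precisely that $Q$ is locally C-stable.

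To prove that $R_{[A]}(G)$ is elementary, I would first reduce to the compactly generated case. Write $G=\bigcup_i O_i$ as a directed union of compactly generated open subgroups. Since $[A]$-regularity is tested on compact open subgroups and $O_i$ is open in $G$, the closed subgroup $R_{[A]}(G)\cap O_i$ is $[A]$-regular in $O_i$, hence contained in $R_{[A]}(O_i)$ by Theorem~\ref{A-rad}(i); being closed in $O_i$, it is therefore elementary as soon as $R_{[A]}(O_i)$ is, because $\mathcal E$ is closed under closed subgroups (\cite[Theorem~4.6(b)]{W14c}). As $R_{[A]}(G)=\bigcup_i\bigl(R_{[A]}(G)\cap O_i\bigr)$ is a directed union of open subgroups of $R_{[A]}(G)$, it then follows from the closure of $\mathcal E$ under directed unions of open subgroups that $R_{[A]}(G)\in\mathcal E$. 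So it suffices to treat $G$ compactly generated.

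Now suppose $G$ is compactly generated. By \cite{KK44} there is a compact normal subgroup $K\unlhd G$ with $G/K$ a \tdlcsc group; write $q\colon G\to G/K$. By Proposition~\ref{thm:radicals} applied to $G/K$, we have $R_{[A]}(G/K)\in\mathcal E_{\mathrm{sc}}\subseteq\mathcal E$. Set $N=q^{-1}(R_{[A]}(G/K))$, a closed normal subgroup of $G$ containing $K$. Then $N$ has the compact, hence profinite, normal subgroup $K\in\mathcal E$ with quotient $R_{[A]}(G/K)\in\mathcal E$, so $N\in\mathcal E$. On the other hand $G/N\cong(G/K)/R_{[A]}(G/K)$ is locally C-stable by Theorem~\ref{A-rad}(ii), so the minimality clause of Theorem~\ref{A-rad}(ii) applied to $G$ gives $R_{[A]}(G)\leq N$; since $\mathcal E$ is closed under closed subgroups, $R_{[A]}(G)\in\mathcal E$, as desired.

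The argument is essentially a routine ``directed union plus Kakutani--Kodaira'' transfer, and I expect no serious obstacle beyond careful bookkeeping with the closure properties of $\mathcal E$ and the local nature of the $[A]$-regular radical. The one point genuinely needing care — and the place where the characterization of $R_{[A]}$ as the \emph{smallest} closed normal subgroup with locally C-stable quotient (Theorem~\ref{A-rad}(ii)) is really used — is the passage across $q\colon G\to G/K$: rather than attempting to control the image of $R_{[A]}(G)$ in $G/K$ (quotients need not preserve $[A]$-regularity), one pulls $R_{[A]}(G/K)$ back to $G$ and compares the two radicals through their extremal characterizations.
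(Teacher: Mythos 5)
Your proof is correct and follows essentially the same route as the paper's: write $G$ as a directed union of compactly generated open subgroups, apply Kakutani--Kodaira to each, and reduce to the second countable case of Proposition~\ref{thm:radicals}. The only (harmless) variations are that where the paper deduces $R_{[A]}(O_i)\leq \pi_i^{-1}(R_{[A]}(O_i/K_i))$ from the stability of $[A]$-regularity under quotients by closed normal subgroups, you reach the same inclusion via the minimality characterization of Theorem~\ref{A-rad}(ii), and you make explicit the derivation of the two ``in particular'' clauses, which the paper leaves implicit.
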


\begin{proof}
Write $ G = \bigcup \limits_{i\in I} O_{i} $ as a directed union of compactly generated   open subgroups. By \cite{KK44}, for each $ i \in I $, there exists a compact normal subgroup $ K_{i}\leq O_{i} $ such that $ O_{i}/K_{i} $ is a \tdlcsc group.

For each $i \in I$, we have $ O_{i}\cap R_{[A]}(G) = R_{[A]}(O_{i}) $ in view of  \cite[Proposition 6.14(ii)]{CRW13}. Let $ \pi_{i}\colon O_{i}\rightarrow O_{i}/K_{i} $ be the projection. Since $ [A] $-regularity is stable under quotients by closed normal subgroups, we have   $ R_{[A]}(O_{i})\leq \pi_{i}^{-1}(R_{[A]}(O_{i}/K_{i})) $. Notice that $R_{[A]}(O_{i}/K_{i}) $ is elementary   by Theorem~\ref{thm:radicals}, hence so is  $ \pi_{i}^{-1}(R_{[A]}(O_{i}/K_{i}))$, and thus also $ R_{[A]}(O_{i})$ by  \cite[Theorem~4.6(b)]{W14c}. We conclude that $ R_{[A]}(G) $ is a directed union of open  elementary subgroups, and is thus elementary, as required.
\end{proof}

\section{Compact subgroups of linear algebraic groups}

As outlined in the introduction, Theorem \ref{mainthm} relies essentially on the results obtained by Pink in \cite{P98}. The goal of this section is to review those results and to adapt them to our needs.

\subsection{The group of abstract commensurators}

Another important object used in the local-to-global transfer lying behind our main results is the group of abstract commensurators of a profinite group, first defined and investigated by Barnea--Ershov--Weigel in \cite{BEW11}.

\begin{de}
Let $ U $ be a profinite group. The \textbf{group of abstract commensurators} of $ U $, denoted $ \Comm(U) $, is defined as follows. Consider the set $ E $ of isomorphisms $ \alpha : U_{1}\rightarrow U_{2} $, where the $  U_{i} $'s are open compact subgroups of $ U $ and $ \alpha $ is a topological isomorphism. Define an equivalence relation $ \sim $ on $ E $ by $ \alpha \sim \beta $ if and only if they coincide on some open subgroup of $ U $. We set $ \Comm(U) = E/\sim $.
\end{de}

As explained in \cite{BEW11}, the group of abstract commensurators of an open compact subgroup of a simple algebraic group is described by Corollary 0.3 of Pink's paper \cite{P98}. Let us record that result explicitly.

\begin{theorem}[Corollary 0.3 in \cite{P98}]
Let $ G $ (resp. $ G' $) be an absolutely simple, simply connected alegebraic group over a local field $ k $ (resp. $ k' $). Let $ U $ (resp. $ U' $) be an open compact subgroup of $ G $ (resp. $ G' $). Then for any topological isomorphism $ \alpha : U\rightarrow U' $, there exists a unique isomorphism of algebraic groups $ G\rightarrow G' $ over a unique isomorphism of topological fields $ k\rightarrow k' $ such that the induced morphism $ G(k)\rightarrow G(k') $ extends $ \alpha $.
\end{theorem}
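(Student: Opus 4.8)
The assertion is precisely Corollary~0.3 of \cite{P98}, so within the scope of the present paper the proof is that reference; the plan below merely indicates the architecture of Pink's argument, which I would follow essentially verbatim. I would begin from the observation already used in the proof of Theorem~\ref{h.j.i.}, that a compact open subgroup $U\leq G(k)$ is Zariski-dense in $G$: its Zariski closure is a closed subgroup of finite index, hence equals $G$ by Zariski-connectedness, and likewise for $U'\leq G'(k')$. I would then reduce to the adjoint case, which is harmless here: by Theorem~\ref{thm:BT}(ii) a central isogeny induces an isomorphism at the level of the relevant topologically simple quotients and is compatible with commensurability, so one may work throughout with compact open subgroups of adjoint groups and transport the final isomorphism back along the universal cover.

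The core is the \emph{existence} of the claimed isomorphism, and this is the substance of \cite{P98}. The mechanism is Pink's \emph{algebraic hull}: to the topological group $U$ one attaches, canonically, a connected adjoint semisimple group scheme over the ring of integers of a local field, obtained by analysing the pro-$p$ structure of $U$, its congruence-type filtration together with the associated graded (a Lie algebra over a finite field), and the adjoint action, and then feeding this data into the classification of semisimple groups to recognise the generic fibre. This hull recovers the pair $(k,G)$ up to isomorphism as a topological-group-theoretic invariant of $U$, together with a tautological embedding of an open subgroup of $U$ into the rational points. Given a topological isomorphism $\alpha\colon U\to U'$, functoriality of the construction forces $\alpha$ to carry the algebraic hull of $U$ isomorphically onto that of $U'$; passing to generic fibres yields an isomorphism of algebraic groups $G\to G'$ over an isomorphism of topological fields $k\to k'$ whose induced map on rational points agrees with $\alpha$ on an open subgroup of $U$, hence on $U$ by Zariski density. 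Uniqueness is then immediate, since the algebraic hull and the identifications above are canonical: any isomorphism extending $\alpha$ must be the one produced this way.

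The main obstacle — and the reason this is a theorem of Pink rather than a routine consequence of analytic rigidity — is carrying out the reconstruction \emph{uniformly in all characteristics}. When the residue characteristic is $0$ one can extract $k$, with its topology, from the $\mathbf{Q}_p$-analytic structure of $U$ in the style of Lazard, and $G$ from the resulting Lie algebra; but in positive characteristic no such analytic crutch is available, and one must instead work directly with the filtration of $U$ by congruence subgroups, control the failure of the associated graded to be generated in degree one, and combine the outcome with the classification of semisimple groups over local fields. I would treat that positive-characteristic reconstruction as the crux and follow \cite[\S\S 0--3]{P98} in detail; everything else is formal.
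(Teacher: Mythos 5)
Your proposal takes the same approach as the paper: the statement is recorded there purely as a citation of Corollary~0.3 of \cite{P98}, with no proof given, and deferring to Pink is exactly what you do. Your additional sketch of Pink's reconstruction argument (Zariski density of compact open subgroups, reduction to the adjoint case via Theorem~\ref{thm:BT}(ii), and the uniform-in-characteristic algebraic hull) is a reasonable outline of the cited proof but is not required by the paper.
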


Given a topological group $G$, we denote by $\mathrm{Aut}(G)$ its group of bi-continuous automorphisms. 

\begin{cor}\label{computecomm}
Let $ G $ be an absolutely simple, simply connected algebraic group over a local field $ k $, and let $ U $ be an open compact subgroup of $ G(k) $. Then $ \Comm(U) $ is canonically isomorphic to $ \mathrm{Aut}(G(k)) $.
\end{cor}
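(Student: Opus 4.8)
The plan is to construct two mutually inverse homomorphisms between $\Comm(U)$ and $\mathrm{Aut}(G(k))$ and to check that both are well-defined, using the theorem of Pink (Corollary~0.3 of \cite{P98}) as the essential input.

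\medskip

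First I would build a map $\Phi \colon \mathrm{Aut}(G(k)) \to \Comm(U)$. Given $\varphi \in \mathrm{Aut}(G(k))$, the subgroup $\varphi(U)$ is again a compact open subgroup of $G(k)$, so $U \cap \varphi(U)$ is open and compact, and $\varphi^{-1}(U \cap \varphi(U))$ is an open compact subgroup of $U$. Restricting $\varphi$ to the latter gives a topological isomorphism between two open compact subgroups of $G(k)$, both of which lie inside $U$; this represents an element of $\Comm(U)$, which I define to be $\Phi(\varphi)$. One checks easily that this class does not depend on the intermediate choices (any two such restrictions agree on a common open subgroup) and that $\Phi$ is a group homomorphism, since composition of automorphisms corresponds to composition of germs of isomorphisms.

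\medskip

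Next I would build the inverse map $\Psi \colon \Comm(U) \to \mathrm{Aut}(G(k))$, and this is where Pink's theorem does the work. Take a representative $\alpha \colon U_1 \to U_2$ of an element of $\Comm(U)$, where $U_1, U_2$ are open compact subgroups of $U$, hence of $G(k)$. By the quoted Corollary~0.3 of \cite{P98} (applied with $G = G' $ and $k = k'$), there is a \emph{unique} isomorphism of algebraic groups $G \to G$ over a unique automorphism of the topological field $k$ whose induced map $G(k) \to G(k)$ extends $\alpha$; call this induced bi-continuous automorphism $\tilde\alpha \in \mathrm{Aut}(G(k))$, and set $\Psi([\alpha]) = \tilde\alpha$. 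The uniqueness clause in Pink's theorem is what makes this well-defined on equivalence classes: if $\beta$ agrees with $\alpha$ on an open subgroup $V \le U_1 \cap U_2'$, then $\tilde\alpha$ and $\tilde\beta$ are two bi-continuous automorphisms of $G(k)$ agreeing on the open — hence Zariski-dense — subgroup $V$, and since $G$ is connected and $V$ is Zariski-dense, they coincide. (Here one uses that a compact open subgroup is Zariski-dense in $H(k)$ for $H$ semisimple, as already invoked in the proof of Theorem~\ref{h.j.i.}.) The same density argument shows $\Psi$ is a homomorphism: the composite germ $\beta\circ\alpha$ is extended both by $\tilde\beta\circ\tilde\alpha$ and by $\widetilde{\beta\circ\alpha}$, so these agree.

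\medskip

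Finally I would verify $\Phi$ and $\Psi$ are mutually inverse. For $\Psi\circ\Phi = \mathrm{id}$: given $\varphi \in \mathrm{Aut}(G(k))$, the germ $\Phi(\varphi)$ is represented by a restriction of $\varphi$, so $\varphi$ itself is a bi-continuous automorphism extending that restriction; by the uniqueness part of Pink's theorem $\Psi(\Phi(\varphi)) = \varphi$. For $\Phi\circ\Psi = \mathrm{id}$: given $[\alpha]$ with $\tilde\alpha = \Psi([\alpha])$ extending $\alpha$, the germ $\Phi(\tilde\alpha)$ is represented by a restriction of $\tilde\alpha$ to an open subgroup, and $\tilde\alpha$ agrees with $\alpha$ there, so $\Phi(\tilde\alpha) = [\alpha]$. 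The canonicity of the isomorphism is then apparent from the construction, which uses no arbitrary choices beyond those that wash out. The main obstacle is purely one of bookkeeping — making sure the passage between ``germ of isomorphism of compact open subgroups'' and ``honest automorphism of $G(k)$'' is consistently well-defined in both directions — and the one genuine input beyond routine verification is Pink's Corollary~0.3 together with the Zariski-density of compact open subgroups, which together guarantee both existence and uniqueness of the extension.
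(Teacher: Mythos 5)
Your two-map construction is exactly the paper's (the paper records only a two-sentence sketch of it), and most of your verifications are fine: the well-definedness of $\Psi$ and the identity $\Phi\circ\Psi=\mathrm{id}$ only ever compare maps that are \emph{induced by algebraic isomorphisms over field isomorphisms}, and for those the uniqueness clause of Pink's Corollary~0.3 applied to a common restriction (or, if you prefer, Zariski-density of compact open subgroups) legitimately forces agreement.

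The step that does not go through as written is $\Psi\circ\Phi=\mathrm{id}$, i.e.\ the injectivity of the restriction map $\mathrm{Aut}(G(k))\to\Comm(U)$. You invoke ``the uniqueness part of Pink's theorem'', but that uniqueness is uniqueness \emph{among isomorphisms of algebraic groups over field isomorphisms}; an arbitrary bi-continuous automorphism $\varphi$ extending $\varphi|_V$ is not a priori of that form, so the clause says nothing about it, and likewise Zariski-density of $V$ only identifies two \emph{algebraic} maps agreeing on $V$, not two abstract continuous automorphisms. What is actually needed is the statement that a bi-continuous automorphism $\tau$ of $G(k)$ fixing a compact open subgroup $V$ pointwise is trivial. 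This requires a separate argument: from $\tau(gvg^{-1})=\tau(g)v\tau(g)^{-1}$ for $v\in V\cap g^{-1}Vg$ one deduces that $\tau(g)^{-1}g$ centralises a compact open, hence Zariski-dense, subgroup, so $\tau(g)\in gZ(G(k))$; then $g\mapsto \tau(g)g^{-1}$ is a continuous homomorphism into the finite centre which is trivial on $V$, and when $G$ is isotropic it must vanish identically because $G(k)=G(k)^{+}$ has no proper finite-index open subgroup (any proper open subgroup is compact by Theorem~\ref{thmT}). In the anisotropic case this last step genuinely needs care, since compact groups of the form $G(k)$ can admit non-trivial central characters killed on an open subgroup. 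To be fair, the paper's own proof elides exactly the same point, but since this is the only non-formal content of the corollary beyond quoting \cite{P98}, your write-up should supply it.
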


\begin{proof}
Indeed, an element of $ \Comm(U) $ uniquely extends to an automorphism of $ G(k) $ by the previous theorem. Conversely, any automorphism $ \alpha $ of $ G(k) $ comes from such an extension, simply by looking at the restriction of $ \alpha $ to $ U\cap \alpha^{-1}(U) $.
\end{proof}

A priori, the group $\Comm(U)$ is just an abstract group, but as discussed in \cite{BEW11}, there are several ways to endow it with a group topology. The identification provided by Corollary~\ref{computecomm} suggests that, in our situation, the natural topology on $\Comm(U)$ should be the one which coincides with the Braconnier topology on $\mathrm{Aut}(G(k))$. Let us now address the details, following Section~7 from \cite{BEW11}.

\begin{de}
A profinite group $ U $ is called \textbf{countably characteristically based} if it has a countable basis of neighbourhood of the identity consisting of characteristic subgroups. A profinite group is called \textbf{hereditarily countably characteristically based} (h.c.c.b.) if every open subgroup of $ U $ is countably characteristically based.
\end{de}

\begin{ex}\label{h.c.c.b.}
Let $ G $ be a $ k $-simple, simply connected algebraic $ k $-group, where $ k $ is a local field, and let $ U $ be an open compact subgroup of $ G(k) $. Then $ U $ is h.c.c.b. Indeed, $ U $ is a h.j.i. virtually pro-$ p $ group (see Theorem \ref{h.j.i.}), hence is finitely generated. And as explained in \cite{BEW11}, section 7.1, every finitely generated profinite group is h.c.c.b. Another way to see that $ U $ is h.c.c.b. is to exhibit by hand a countable characteristic neighbourhood basis of any open subgroup by considering intersection of maximal open normal subgroups. 
\end{ex}

\begin{de}
Let $ U $ be an h.c.c.b. profinite group. For any open subgroup $ V\leq U $, let $ \rho _{V} : \mathrm{Aut}(V)\rightarrow \Comm(U) $ be the natural homomorphism and endow $ \mathrm{Aut}(V) $ with the compact-open topology. The \textbf{Aut-topology} on $ \Comm(U) $ is defined by the following sub-base of identity neighbourhood : 
\begin{align*}
\mathcal{B}_{U} = \lbrace H\leq  \Comm(U)\: \vert \:   \rho_{V}^{-1}(H)\: \text{is open in}\: \mathrm{Aut}(V)
 \text{ for all open subgroups}\: V \: \text{of}\: U\rbrace. 
\end{align*}
As explained in \cite{BEW11}, Proposition 7.3, this turns $ \Comm(U) $ into a topological group.
\end{de}


\begin{de}
Let $ G $ be a locally compact group. The \textbf{Braconnier} topology on $ \mathrm{Aut}(G) $ is defined by the following sub-base of identity neighbourhood : 
\begin{align*}
\mathcal{U}(K,U) = \lbrace \phi \in \mathrm{Aut}(G)\: \vert \: \forall \: x\in K,\: \phi (x) \in xU\: \text{and}\: \phi^{-1} (x) \in xU \rbrace,
\end{align*}
 where $ K\subseteq G $ is compact and $ U\subseteq G $ is an identity neighbourhood (see e.g. \cite{CM11}, Appendix I for more comments on this topology).
\end{de}

\begin{rem}\label{continuityoftheadjointmap}
The Braconnier topology is the natural one, in the sense that it turns $\mathrm{Aut(G)}$ into a topological group, while the compact-open topology on   $\mathrm{Aut(G)}$ does not in general. However it does in the special case where $G$ is compact. Moreover, given any closed normal subgroup $N$ of $G$,  the adjoint map $ \mathrm{Ad} \colon G\rightarrow \mathrm{Aut}(N) $  given by the conjugation action is continuous for the Braconnier topology (see \cite{HR79}, Theorem (26.7)).
\end{rem}

In order to prove that the Aut-topology on $ \Comm(U) $ coincide with the Braconnier topology on $\mathrm{Aut}(G(k)) $, we use the following result due to Barnea--Ershov--Weigel. 

\begin{prop}\label{toponComm}
Let $ U $ be an h.c.c.b. profinite group such that $ \Comm(U) $ with the Aut-topology is Hausdorff. Suppose that $ \Comm(U) $ is a topological group with respect to some topology $ \mathcal{T} $ and that there exists an open subgroup $ V $ of $ U $ such that
\begin{enumerate}[(i)]
\item 
The index $[\Comm(U) : \mathrm{Aut}(V)]$ is countable.

\item $ \mathrm{Aut}(V) $ is an open compact subgroup of $(\Comm(U), \mathcal{T} )$.

\item If $ N $ is an open subgroup of $ V $ and $ (f_{n})_{n=1}^{\infty} $ is a sequence in $ \mathrm{Aut}(V) $ such that $ f_{n}\rightarrow 1 $ with respect to $ \mathcal{T} $, then $ f_{n}(N) = N $ for sufficiently large $n$.
\end{enumerate}

Then $(\Comm(U), \mathcal T)$ is locally compact, second countable,  and $ \mathcal{T} $ coincides with the Aut-topology.
\end{prop}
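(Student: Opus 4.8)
The strategy is to show that the identity map $(\Comm(U),\mathcal T) \to (\Comm(U), \text{Aut-topology})$ is a topological isomorphism, using the general fact that a continuous bijective homomorphism between locally compact second countable groups is open. So the plan breaks into three pieces: (1) establish that $(\Comm(U),\mathcal T)$ is locally compact and second countable; (2) show the identity is continuous in one direction; (3) invoke the open mapping theorem to get the reverse.

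For (1): by hypothesis (ii), $\mathrm{Aut}(V)$ is a compact open subgroup of $(\Comm(U),\mathcal T)$, so local compactness is immediate, and $(\Comm(U),\mathcal T)$ is $\sigma$-compact once we know it has countably many cosets of $\mathrm{Aut}(V)$, which is hypothesis (i). A locally compact group that is $\sigma$-compact and has a compact open subgroup which is itself second countable is second countable; and $\mathrm{Aut}(V)$ with the compact-open topology is second countable because $V$ is metrisable profinite (indeed $U$, hence $V$, is h.c.c.b., so second countable). Thus $(\Comm(U),\mathcal T)$ is locally compact second countable. The same argument, using that the Aut-topology also makes $\mathrm{Aut}(V)$ a compact open subgroup with countable index (this is where h.c.c.b.-ness and the definition of $\mathcal B_U$ enter, together with the assumed Hausdorffness), shows $(\Comm(U),\text{Aut-topology})$ is locally compact second countable as well.

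For (2): I would show the identity map $(\Comm(U),\mathcal T) \to (\Comm(U),\text{Aut-topology})$ is continuous at $1$. Since both topologies have $\mathrm{Aut}(V)$ as an open subgroup, it suffices to compare the subspace topologies on $\mathrm{Aut}(V)$. A sub-base of Aut-topology neighbourhoods of $1$ inside $\mathrm{Aut}(V)$ is given, via $\rho_V$, by the stabilisers of open subgroups $N \leq V$, i.e.\ by sets of the form $\{f \in \mathrm{Aut}(V) : f(N) = N\}$ (and then the induced topology on $\mathrm{Aut}(V)/\text{something}$, but the point is these generate). Hypothesis (iii) says precisely that any $\mathcal T$-convergent sequence in $\mathrm{Aut}(V)$ eventually lies in each such stabiliser; since both groups are metrisable (second countable), sequential continuity at $1$ gives continuity. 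Hence the identity is a continuous bijective homomorphism of locally compact second countable groups.

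For (3): apply the open mapping theorem for locally compact $\sigma$-compact groups (a continuous surjective homomorphism from a $\sigma$-compact locally compact group onto a locally compact Hausdorff group is open) to conclude the identity map is a homeomorphism, so $\mathcal T$ equals the Aut-topology; local compactness and second countability then transfer as already noted. The main obstacle I anticipate is step (2): turning the sequential condition (iii) into honest continuity requires care that the relevant topologies are first countable (which follows from second countability of $\mathrm{Aut}(V)$), and one must check that the stabilisers $\{f : f(N)=N\}$ of open $N\leq V$ really do furnish a sub-base of identity neighbourhoods for the subspace Aut-topology on $\mathrm{Aut}(V)$ — this is essentially unwinding the definition of $\mathcal B_U$ and the homomorphisms $\rho_V$, using h.c.c.b.-ness of $U$ to ensure enough characteristic open subgroups are available.
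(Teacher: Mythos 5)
The paper does not actually prove this statement from scratch: it cites Proposition~8.8 of \cite{BEW11} for the identification of $\mathcal T$ with the Aut-topology (and for local compactness of the latter), and only supplies the short supplementary argument for second countability. Your proposal is therefore more ambitious, and its architecture (compare the two topologies, then apply an open mapping theorem) is reasonable, but it has two genuine gaps. First, in your step (2): hypothesis (iii) is a \emph{sequential} condition with respect to $\mathcal T$. To convert it into the statement that each stabiliser $\{f \in \mathrm{Aut}(V) : f(N)=N\}$ is a $\mathcal T$-neighbourhood of the identity, you need $(\mathrm{Aut}(V), \mathcal T)$ to be first countable at $1$. You claim this ``follows from second countability of $\mathrm{Aut}(V)$'', but the second countability you have in hand is that of the \emph{compact-open} topology (coming from $V$ being countably based); first countability of $\mathcal T$ on $\mathrm{Aut}(V)$ is not among the hypotheses and would essentially follow from the conclusion $\mathcal T = {}$Aut-topology that you are trying to prove. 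As written, this step is circular. The same conflation appears in your step (1), where you need $(\mathrm{Aut}(V),\mathcal T)$, not $(\mathrm{Aut}(V),\text{c.o.})$, to be second countable; that part is harmless only if you postpone the second-countability claim until after the topologies have been identified, which is exactly the order the paper adopts.

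Second, in your step (3): the open mapping theorem for a continuous bijective homomorphism from a $\sigma$-compact locally compact group requires the \emph{target} to be locally compact Hausdorff (or at least Baire) --- the identity from $\mathbf{R}$ with its usual topology to $\mathbf{R}$ with the Bohr topology shows the hypothesis cannot be dropped. Here the target is $(\Comm(U),\text{Aut-topology})$, and its local compactness --- equivalently, the openness of $\rho_V(\mathrm{Aut}(V))$ in the Aut-topology --- is not automatic for an h.c.c.b.\ group (it is precisely one of the non-trivial outputs of the cited result of Barnea--Ershov--Weigel) and is not established anywhere in your argument. Even the softer route of observing that a continuous bijection from the compact set $(\mathrm{Aut}(V),\mathcal T)$ onto its image in a Hausdorff space is a homeomorphism only identifies the two topologies \emph{on} $\mathrm{Aut}(V)$; to conclude they agree globally you again need $\mathrm{Aut}(V)$ to be open for the Aut-topology. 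Both gaps would need to be filled (or the appeal to \cite{BEW11} made explicit, as the paper does) for the proof to stand.
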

\begin{proof}
See Proposition 8.8 in \cite{BEW11} for the fact that $ \Comm(U) $ is a locally compact group. Since $U$ is h.c.c.b., it follows that $\mathrm{Aut}(V)$ is a compact metrisable group with respect to the compact-open topology. By definition of the Aut-topology, the natural embedding $\mathrm{Aut}(V)\to \Comm(U)$ is continuous, and is thus a homeomorphism onto its image. Therefore $\Comm(U)$ is metrisable. Moreover it is $\sigma$-compact since $[\Comm(U) : \mathrm{Aut}(V)]$ is countable. This confirms that $\Comm(U)$ is second countable.
\end{proof}

The following result is a straightforward adaption of Example~8.1 from \cite{BEW11} to our situation.

\begin{prop}\label{topiso}
Let $ H $ be an absolutely simple, simply connected algebraic group over a local field $ k $, and let $ U $ be an open compact subgroup of $ H(k) $. Then the canonical isomorphism $ \Comm(U)\simeq \mathrm{Aut}(H(k)) $  of Corollary \ref{computecomm} is an isomorphism of topological groups, where $ \Comm(U) $ has the $ \mathrm{Aut} $-topology and $ \mathrm{Aut}(H(k)) $ has the Braconnier topology. In particular  $ \mathrm{Aut}(H(k)) $ is a \tdlcsc group.
\end{prop}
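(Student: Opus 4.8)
The strategy is to verify the three hypotheses of Proposition~\ref{toponComm} for the pair $(U, H(k))$, where $\mathcal T$ is the Braconnier topology on $\mathrm{Aut}(H(k))$ transported to $\Comm(U)$ via the canonical isomorphism of Corollary~\ref{computecomm}; then the conclusion of Proposition~\ref{toponComm} gives exactly what we want. First I would record the preliminary facts: by Example~\ref{h.c.c.b.}, $U$ is h.c.c.b.; and since $H(k)$ is locally compact, the Braconnier topology makes $\mathrm{Aut}(H(k))$ a topological group, so $\mathcal T$ is a genuine group topology on $\Comm(U)$. One also needs that the Aut-topology on $\Comm(U)$ is Hausdorff: this holds because $U$ has trivial center-free nature — more precisely, $U/Z(U)$ is h.j.i. and not virtually abelian by Theorem~\ref{h.j.i.}, but in fact the cleanest argument is that $\mathrm{Aut}(H(k))$ with the Braconnier topology is Hausdorff (it is a topological group acting faithfully and the identity is closed), and the embedding $\mathrm{Aut}(V)\hookrightarrow\Comm(U)$ is continuous with Hausdorff source for a suitable $V$; alternatively invoke that $\Comm(U)$ is Hausdorff whenever $QZ(U)=1$, which holds here.

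Next I would choose the open subgroup $V$. Take $V = U \cap \mathrm{Ad}(U')^{-1}$-invariant... more simply: fix a compact open subgroup $V$ of $H(k)$ contained in $U$ that is \emph{normalized} by $U$ — for instance, if $U$ is already a maximal compact, take $V=U$; in general replace $U$ by a compact open subgroup small enough that conjugation by $U$ preserves it, which exists since $H(k)$ is totally disconnected. With this choice, for condition (ii) I would argue that $\mathrm{Aut}(V)$ is the preimage in $\mathrm{Aut}(H(k))$ of a Braconnier-neighbourhood of the identity: an automorphism of $H(k)$ lies in $\mathrm{Aut}(V)$ (i.e. stabilises $V$) precisely when it lies in a set of the form $\mathcal U(V,V)$-type neighbourhood composed with the stabiliser, and using that $V$ is compact open this stabiliser is open; compactness of $\mathrm{Aut}(V)$ for the compact-open topology follows from $V$ being profinite (finitely generated, by Example~\ref{h.c.c.b.}), and this compact-open topology agrees with the subspace topology from the Braconnier topology since $V$ is compact (Remark~\ref{continuityoftheadjointmap}). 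For condition (i), $[\Comm(U):\mathrm{Aut}(V)] = [\mathrm{Aut}(H(k)):\mathrm{Stab}(V)]$, and since $H(k)$ is second countable, it has only countably many compact open subgroups in the $\mathrm{Aut}(H(k))$-orbit of $V$... better: the coset space $\mathrm{Aut}(H(k))/\mathrm{Stab}(V)$ embeds in the (countable, by second countability of $H(k)$) set of compact open subgroups of $H(k)$, hence is countable.

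The main obstacle is condition (iii): if $N$ is an open subgroup of $V$ and $f_n\to 1$ in the Braconnier topology with $f_n\in\mathrm{Aut}(V)$, then $f_n(N)=N$ for large $n$. The point is that $N$ has finite index in $V$, so there are only finitely many subgroups of $V$ of that index; Braconnier-convergence $f_n\to 1$ forces $f_n$ to move each element of a fixed finite generating set of $V$ into an arbitrarily small neighbourhood of itself, hence (once the neighbourhood is inside $N$ for the relevant generators, and using that $N$ is open so a transversal argument pins down $f_n(N)$) the image $f_n(N)$ must eventually equal $N$; I would spell this out using that the finitely many index-$[V:N]$ subgroups are pairwise "separated" by some open subgroup $N_0\trianglelefteq V$, and that $f_n\in\mathcal U(T,N_0)$ for a finite transversal $T$ of $N_0$ forces $f_n(N)=N$. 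Once (i)--(iii) are checked, Proposition~\ref{toponComm} yields that $\mathcal T$ coincides with the Aut-topology and that $(\Comm(U),\mathcal T)$ is locally compact second countable; unravelling the identification of Corollary~\ref{computecomm} then gives that $\Comm(U)\cong\mathrm{Aut}(H(k))$ is an isomorphism of topological groups and that $\mathrm{Aut}(H(k))$ is \tdlcsc, as asserted.
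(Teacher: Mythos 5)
Your proposal follows essentially the same route as the paper: it verifies the three hypotheses of Proposition~\ref{toponComm} for the Braconnier topology transported via Corollary~\ref{computecomm}, with Hausdorffness obtained from triviality of the quasi-centre and countability of the index from second countability of $H(k)$ (your search for a $U$-normalised $V$ is unneeded, since one may simply take $V=U$). Two small points of comparison: the paper makes conditions (ii) and (iii) immediate by observing that, since every automorphism of a compact open subgroup $N$ extends to $H(k)$ by Pink's theorem, $\mathrm{Aut}(N)=\lbrace \varphi \in \mathrm{Aut}(H(k)) \mid \varphi(N)=N\rbrace = \mathcal{U}(N,N)$ is itself a basic Braconnier identity neighbourhood, so your finite-index/transversal argument for (iii) is a correct but unnecessary detour; and $QZ(U)$ need not literally be trivial (as $U$ may meet the finite centre of $H(k)$), so one should first replace $U$ by a centre-free open subgroup $V$ with $\Comm(V)=\Comm(U)$, as the paper does.
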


\begin{proof}
As noted in Example \ref{h.c.c.b.},  $ U $ is h.c.c.b. We next claim that $ \Comm(U) $ is Hausdorff. Indeed, $ \Comm(U)=\Comm(V) $ for some open subgroup $ V $ having a trivial center, so that $ QZ(V) $ is trivial (to prove this last assertion, one can argue as in \cite{BEW11}, Proposition 5.1). Using \cite{BEW11}, Proposition 2.5, this implies the claim.

The desired conclusion will follow from Proposition \ref{toponComm}. In order to check the three conditions, we first remark that, since every automorphism of $ U $ extends to the whole of $ H(k) $, we have 
\begin{align}\label{eq}
\mathrm{Aut}(U) = \lbrace \varphi \in \mathrm{Aut}(H(k))\; \vert \; \varphi (U)= U \rbrace = \mathcal{U}(U,U).
\end{align}
We now check the three conditions successively. 

(i) The index of $ \mathrm{Aut}(U) $ in $ \Comm(U) $ is countable. Indeed, $ \phi $, $ \psi \in \Comm(U) $ are in the same  coset modulo $ \mathrm{Aut}(U) $ if and only if $ \phi (U)=\psi (U) $. Therefore it suffices to check that $U$ has a countable orbit under $\Comm(U) = \mathrm{Aut}(H(k))$. This is indeed the case, since $H(k)$ is second countable, and thus has countably many compact open subgroups. 

(ii) In view of (\ref{eq}), $ \mathrm{Aut}(U) $ is open in $ \mathrm{Aut}(H(k)) $ by the definition of the Braconnier topology.

(iii) Let $ N $ be an open subgroup of $ U $, and let $ (f_{n})_{n=1}^{\infty} $ be a sequence converging to $ 1 $ in $ \mathrm{Aut}(H(k)) $. Then, for $ n $ large enough, $ f_{n}\in \mathcal{U}(N,N) = \lbrace \varphi \in \mathrm{Aut}(H(k))$ $\vert $ $ \varphi (N) = N \rbrace $, as wanted.
\end{proof}

If $U$ is an open compact subgroup of $G$, we have a canonical map  $ G\rightarrow \Comm(U) $; 
we end this section by verifying its continuity.

\begin{lem}\label{generalcontinuity}
Let $ U $ be an h.c.c.b. profinite group and let $ G $ be a topological group containing $ U $ as a locally normal subgroup. Assume that $ G $ commensurates $ U $. Then the canonical map $ \varphi \colon G\rightarrow \Comm(U) $ is continuous, where $ \Comm(U) $ has the $ \mathrm{Aut} $-topology.
\end{lem}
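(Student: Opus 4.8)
The statement to prove is that for an h.c.c.b. profinite group $U$ sitting as a locally normal, commensurated subgroup in a topological group $G$, the canonical map $\varphi \colon G \to \Comm(U)$ is continuous for the Aut-topology on $\Comm(U)$. Since $\Comm(U)$ is a topological group, it suffices to prove continuity at the identity of $G$, i.e. to show that $\varphi^{-1}(H)$ is a neighbourhood of $1$ in $G$ for every $H$ in the defining sub-base $\mathcal B_U$. Unwinding the definition, $H \in \mathcal B_U$ means $\rho_V^{-1}(H)$ is open in $\mathrm{Aut}(V)$ for every open subgroup $V \le U$, where $\rho_V \colon \mathrm{Aut}(V) \to \Comm(U)$ is the natural map and $\mathrm{Aut}(V)$ carries the compact-open topology. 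So the real task is: given such an $H$, produce an open neighbourhood $\Omega$ of $1$ in $G$ with $\varphi(\Omega) \subseteq H$.

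\medskip

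\textbf{First step: reduce to a single open subgroup of $U$ on which $G$ acts by conjugation.} Since $G$ commensurates $U$ and $U$ is locally normal, there is an open subgroup $G_0 \le G$ normalising $U$; the conjugation action gives a homomorphism $\mathrm{Ad}\colon G_0 \to \mathrm{Aut}(U)$, and on $G_0$ the map $\varphi$ factors as $G_0 \xrightarrow{\mathrm{Ad}} \mathrm{Aut}(U) \xrightarrow{\rho_U} \Comm(U)$. It is enough to check continuity of $\varphi$ at $1$ within $G_0$. Now for $H \in \mathcal B_U$ the set $\rho_U^{-1}(H)$ is open in $\mathrm{Aut}(U)$ for the compact-open topology, so by Remark~\ref{continuityoftheadjointmap} (continuity of the adjoint map $G_0 \to \mathrm{Aut}(U)$ for the Braconnier topology, which on the compact group $U$ coincides with the compact-open topology) the preimage $\mathrm{Ad}^{-1}(\rho_U^{-1}(H))$ is open in $G_0$, and this is exactly $\varphi^{-1}(H) \cap G_0$. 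Taking intersections over the finitely many generators of a given basic neighbourhood and using that $\mathcal B_U$ is a sub-base, we get that $\varphi^{-1}$ of any basic Aut-neighbourhood is open in $G_0$, hence in $G$.

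\medskip

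\textbf{Where the subtlety lies.} The potential gap is that the factorisation $\varphi|_{G_0} = \rho_U \circ \mathrm{Ad}$ uses that elements of $G_0$, acting by conjugation, give genuine elements of $\Comm(U)$ agreeing with the abstract-commensurator class of that conjugation — which is immediate from the definitions — and that the compact-open topology on $\mathrm{Aut}(U)$ is the subspace topology induced from the Aut-topology on $\Comm(U)$ via $\rho_U$. The latter is built into the definition of the Aut-topology only after one knows $U$ is h.c.c.b.; this is precisely the hypothesis, and it is what makes $\rho_U$ an open topological embedding of $\mathrm{Aut}(U)$ into $\Comm(U)$ (cf. the argument recalled in the proof of Proposition~\ref{toponComm}). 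So the main thing to be careful about is invoking the h.c.c.b. hypothesis at exactly this point, rather than any hard estimate; once the embedding $\mathrm{Aut}(U) \hookrightarrow \Comm(U)$ is identified as a topological one, continuity of $\varphi$ is just composition of the continuous maps $G_0 \xrightarrow{\mathrm{Ad}} \mathrm{Aut}(U) \hookrightarrow \Comm(U)$ on the open subgroup $G_0$, together with the elementary fact that a homomorphism of topological groups is continuous as soon as it is continuous at the identity on an open subgroup of the source.
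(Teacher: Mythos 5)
Your argument is correct and is essentially the paper's own proof: restrict to the open normaliser $N_G(U)$, factor $\varphi$ there as $\rho_U\circ\mathrm{Ad}$, and conclude from continuity of the adjoint map (Remark~\ref{continuityoftheadjointmap}) together with continuity of $\rho_U$, which holds by the very definition of the Aut-topology. The extra discussion of $\rho_U$ being an open embedding is not needed --- plain continuity of $\rho_U$ suffices --- but it does no harm.
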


\begin{proof}
Let $ W=N_{G}(U) $, which is open by assumption. It suffices to prove that the restriction of $ \varphi $ to $ W $ is continuous at the identity. Observe that $ \varphi $ factors through $ \rho_{U} \colon \mathrm{Aut}(U)\rightarrow \Comm(U)  $, which is continuous by definition of the $ \mathrm{Aut} $-topology. Moreover the adjoint map $W \to \mathrm{Aut}(U)$ is continous by Remark~\ref{continuityoftheadjointmap}, so that the composed map $W \to \mathrm{Aut}(U) \to \Comm(U)$ is continuous as well.
\end{proof}

\subsection{Decomposition into hereditarily just-infinite factors}

We now come to the heart of our toolbox, which consists of Pink's results from \cite{P98}. We start by repeating one of the main theorems from loc. cit.

\begin{theorem}\label{cor0.5}
Let $ k $ be a local field and let $ \Gamma $ be a compact subgroup of $ GL_{n}(k) $. There exist closed normal subgroups $ U_{3}\leq U_{2}\leq U_{1} $ of $ \Gamma $ such that

\begin{enumerate}[(i)]
\item $ U_{1} $ is of finite index in $ \Gamma $.
\item $ U_{1}/U_{2} $ is abelian of finite exponent.
\item There exists a local field $ k' $ of the same characteristic and residue characteristic as  $ k $, a Zariski-connected, semisimple adjoint algebraic group $ H $ over $ k' $, with universal covering $ \pi \colon \tilde{H}\rightarrow H $, and an open compact subgroup $ \Delta \leq \tilde{H}(k')  $ such that $ U_{2}/U_{3}\simeq \pi_{k'} (\Delta ) $ as topological groups.
\item $ U_{3} $ is solvable of derived length $  $ $ \leq n $.
\end{enumerate}

\end{theorem}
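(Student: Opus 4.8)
The statement is literally one of the main results in Pink's paper \cite{P98}, so the "proof" here will be a guided translation: the task is to extract the statement in the exact form needed and to reconcile Pink's terminology with ours. Let me think about how I would actually assemble it from Pink's theorems.

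Pink's setup works with compact subgroups of $GL_n$ over a field complete with respect to a discrete (or even rank-one) valuation. The key structural statement (his Theorem 0.2 / Corollary 0.5) says roughly: given a compact $\Gamma \le GL_n(k)$, after passing to an open subgroup of finite index $U_1$, there is a normal series $U_3 \le U_2 \le U_1$ where $U_1/U_2$ is abelian of bounded exponent, $U_3$ is solvable with derived length controlled by $n$, and the "semisimple part" $U_2/U_3$ is commensurable with (more precisely, topologically isomorphic to the image under the isogeny of) an open compact subgroup of the $k'$-points of the simply connected cover of a semisimple adjoint group over some local field $k'$. The punchline about $k'$ having the same characteristic and residue characteristic as $k$ comes from Pink's analysis: the local field $k'$ arises as a finite extension (or subfield) tied to the Zariski closure of $\Gamma$, so its residue characteristic is forced, and equal characteristic is preserved.

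**Approach.** First I would recall precisely the statement of Corollary 0.5 in \cite{P98}, noting that Pink phrases it for fields with a (possibly non-discrete) valuation; our hypothesis that $k$ is a local field is a special case, which moreover guarantees that the ambient field $k'$ produced is again a local field rather than a general valued field (local compactness and the finiteness of residue field, resp. the structure of $\mathbf{R},\mathbf{C},\mathbf{Q}_p,\mathbf{F}_q(\!(t)\!)$, are preserved under the finite extensions/subfields that appear). Second, I would match the data: Pink produces a Zariski-connected semisimple group and a compact open subgroup of its points up to isogeny; one then replaces it by its adjoint form $H$ with universal cover $\pi:\tilde H \to H$, and sets $\Delta$ to be a compact open subgroup of $\tilde H(k')$ so that $U_2/U_3 \cong \pi_{k'}(\Delta)$ as topological groups — this is exactly how Pink records it, once one knows $\pi_{k'}(\tilde H(k'))$ is the relevant normal subgroup and $\pi_{k'}$ restricted to a compact open subgroup is a continuous homomorphism with the quotient topology matching. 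Third, the bounds: "$U_1$ of finite index", "$U_1/U_2$ abelian of finite exponent", and "$U_3$ solvable of derived length $\le n$" are all quantitative assertions read directly off Pink's statement (the derived-length bound is tied to the length of a composition series of the standard representation, hence $\le n$).

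**Main obstacle.** The substantive point — and the only place where any real work is needed beyond citation — is the claim on the characteristic and residue characteristic of $k'$. In mixed characteristic this is immediate since any field into which $\mathbf{Q}_p$ embeds (or which embeds into a finite extension of $\mathbf{Q}_p$) has residue characteristic $p$; in equal characteristic $p$ one must check that the local field $k'$ appearing in Pink's construction, which comes from the "field of definition" of the Zariski closure together with Frobenius twists/restriction of scalars phenomena that Pink handles, is again of characteristic $p$ with residue field a finite extension of $\mathbf{F}_p$ — this is exactly the content Pink tracks (his fields are built from subquotient fields of $k$ and finite extensions thereof), so I would cite the relevant passage of \cite{P98} rather than reprove it. So the honest summary of the proof is: \emph{this is Corollary 0.5 of \cite{P98}, specialized to the case of a local field}, with the passage to the adjoint form and the identification $U_2/U_3 \cong \pi_{k'}(\Delta)$ being a cosmetic reformulation, and the characteristic statement being part of Pink's conclusion.
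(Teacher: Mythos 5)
Your proposal is correct and matches the paper exactly: the paper's proof of this theorem is simply the citation ``See \cite{P98}, Corollary 0.5,'' and your reading of the result as a direct specialization of Pink's corollary to local fields (with the adjoint-form reformulation and the characteristic/residue-characteristic tracking being part of Pink's own conclusion) is the intended argument.
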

\begin{proof}
See \cite{P98}, Corollary 0.5.
\end{proof}

It will be crucial for our purposes to arrange that the subquotient $ U_{2}/U_{3} $ is the direct product of h.j.i. groups. This is achieved by the following. 

\begin{theorem}[Extended version of \cite{P98}, Corollary 0.5]\label{cor0.5bis}
Let $ k $ be a local field and let $ \Gamma $ be a compact subgroup of $ GL_{n}(k) $. There exist closed normal subgroups $ U_{3}\leq U_{2}\leq U_{1} $ of $ \Gamma $ such that :

\begin{enumerate}[(i)]
\item $ U_{1} $ is of finite index in $ \Gamma $.

\item $ U_{1}/U_{2} $ is abelian of finite exponent.

\item There exist local fields $ k'_{1},\dots,k'_{m} $ of the same characteristic and residue characteristic than  $ k $, Zariski-connected, absolutely simple adjoint algebraic $ k'_{i} $-group $ H_{i} $, with universal covering $ \pi_{i} \colon \tilde{H_{i}}\rightarrow H_{i} $, and open compact subgroups $ \Delta_{i} \leq \tilde{H_{i}}(k'_{i})  $ such that $ U_{2}/U_{3}\simeq \pi_{1} (\Delta_{1} )\times \dots \times \pi_{m} (\Delta_{m} )  $ as topological groups. In particular, the subquotient $ U_{2}/U_{3} $ is a direct product of non-virtually abelian h.j.i. groups.

\item $ U_{3} $ is solvable of derived length  $ \leq n $.

\end{enumerate}

\end{theorem}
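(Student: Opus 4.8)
The strategy is to start from Pink's Corollary 0.5 as stated in Theorem~\ref{cor0.5} and refine the middle subquotient. So, given a compact $\Gamma \leq GL_n(k)$, first apply Theorem~\ref{cor0.5} to obtain $U_3' \leq U_2' \leq U_1'$ with $U_1'$ of finite index, $U_1'/U_2'$ abelian of finite exponent, $U_2'/U_3' \simeq \pi_{k'}(\Delta)$ for an open compact $\Delta$ in $\tilde H(k')$ with $H$ semisimple adjoint Zariski-connected over $k'$ and $\pi\colon \tilde H \to H$ the universal cover, and $U_3'$ solvable of derived length $\leq n$. The point is then to decompose the group $\pi_{k'}(\Delta)$ as a direct product of non-virtually-abelian h.j.i.\ groups indexed by the absolutely simple factors of $\tilde H$, and to pull this decomposition back to a refinement of the normal series in $\Gamma$.

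\medskip

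The key structural input is the factorisation of a Zariski-connected semisimple simply connected group recalled in \S\ref{sec:LAG}: $\tilde H \simeq \prod_{i=1}^m \mathcal R_{k_i/k'} \tilde H_i$ with each $\tilde H_i$ absolutely simple simply connected over a finite separable extension $k_i$ of $k'$, and correspondingly $\tilde H(k') \simeq \prod_i \tilde H_i(k_i)$; pushing forward along the isogeny $\pi$ and passing to the compact open subgroup, I would arrange $\Delta = \prod_i \Delta_i$ with $\Delta_i$ open compact in $\tilde H_i(k_i)$, so that $\pi_{k'}(\Delta) \simeq \prod_i \pi_{i,k_i}(\Delta_i)$ (shrinking $U_2'$, equivalently $\Delta$, if necessary to make it a literal product of its projections, which costs only a finite-index passage absorbed into $U_1'$). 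Each factor $\pi_{i,k_i}(\Delta_i)$ is the image of an open compact subgroup of an absolutely simple simply connected group under its universal covering isogeny, hence is itself an open compact subgroup of $H_i(k_i)^+$ up to finite index — here one invokes Theorem~\ref{h.j.i.} together with Theorem~\ref{thm:BT}(ii) to see that $\Delta_i/Z(\Delta_i) \cong \pi_{i,k_i}(\Delta_i)$ modulo its own centre is h.j.i.\ and virtually pro-$p$ — and then one checks $\pi_{i,k_i}(\Delta_i)$ is itself h.j.i.\ (not merely modulo its centre): since $\tilde H_i \to H_i$ is an isogeny, the centre of $\pi_{i,k_i}(\Delta_i)$ is contained in the centre of the adjoint group's points restricted to the compact subgroup, which is trivial when $H_i$ is adjoint, so $\pi_{i,k_i}(\Delta_i)$ is h.j.i.\ on the nose, and non-virtually-abelian by the last paragraph of the proof of Theorem~\ref{h.j.i.}.

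\medskip

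Finally I would transport the product decomposition back to $\Gamma$: set $U_1 := U_1'$, $U_3 := U_3'$, and let $U_2 \leq U_1'$ be the preimage in $U_1'$ of the subgroup of $U_2'/U_3'$ corresponding to $\prod_i \pi_{i,k_i}(\Delta_i)$ under the chosen isomorphism (after the finite-index shrinking mentioned above, this is just $U_2'$ itself). Then $U_3 \unlhd U_2 \unlhd U_1 \unlhd \Gamma$ are closed normal subgroups of $\Gamma$ (normality in $\Gamma$ is inherited from Theorem~\ref{cor0.5}, and the finite-index adjustment keeps everything characteristic-enough to remain normal — one may need to replace $U_2$ by the intersection of its finitely many $U_1$-conjugates to restore normality, which again costs only finite index inside $U_1$), items (i), (ii), (iv) are inherited verbatim, and (iii) holds by construction with the required control on the characteristic and residue characteristic of the $k_i$ (these are finite separable extensions of $k'$, which has the same characteristic and residue characteristic as $k$ by Theorem~\ref{cor0.5}(iii), and a finite separable extension preserves both). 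The final sentence of (iii) — that $U_2/U_3$ is a direct product of non-virtually-abelian h.j.i.\ groups — is then exactly the content of the previous paragraph.

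\medskip

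The main obstacle I anticipate is bookkeeping around finite-index passages: making the open compact subgroup $\Delta$ into an honest direct product of its coordinate projections, and then ensuring that the resulting refined subgroup $U_2$ is genuinely \emph{normal} in $\Gamma$ (not just in $U_1$) after these adjustments, without disturbing properties (i), (ii), (iv). This is routine but must be done carefully; replacing candidate subgroups by finite intersections of $\Gamma$-conjugates, and absorbing the resulting finite-index loss into the already-finite-index subgroup $U_1$, should resolve it. The genuinely mathematical steps — the semisimple factorisation and the h.j.i.\ property of each factor — are immediate from the results already quoted in the excerpt.
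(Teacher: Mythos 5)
Your proposal follows the paper's route almost exactly up to the point where you produce $\Lambda = \Delta_1 \times \dots \times \Delta_m$ inside $\Delta$ and observe that $\pi(\Lambda) \simeq \pi_1(\Delta_1)\times\dots\times\pi_m(\Delta_m)$ is an open subgroup of $U_2/U_3$: the Weil-restriction decomposition, the identification of each factor with $\Delta_i/Z(\Delta_i)$ via Theorem~\ref{h.j.i.}, and the control on the fields $k_i'$ are all as in the paper. The genuine gap is in the step you dismiss as ``routine bookkeeping'', namely restoring normality in $\Gamma$. Replacing your candidate $U_2$ by the intersection of its conjugates does give a closed normal subgroup of $\Gamma$ (and note the conjugates must be taken under $\Gamma$, not $U_1$; even the finiteness of the set of conjugates needs a word, since $U_2$ need not be open in $\Gamma$ and one has to appeal to topological finite generation of $U_2/U_3$). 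But this intersection has no reason to remain a \emph{direct product} of subgroups of the factors, which is what assertion~(iii) requires. By Lemma~\ref{2m}, an open locally normal subgroup $N$ of $V_1\times\dots\times V_m$ is only sandwiched as $U_1\cdots U_m \leq N \leq V_1\cdots V_m$ with $U_i$ open in $V_i$; it can be a ``near-diagonal'' subgroup (already in $V\times V$, the preimage of the diagonal of $(V/W)\times(V/W)$ contains $W\times W$ but does not split). So after intersecting conjugates you are back where you started, needing to shrink to a product and thereby losing normality again.

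The paper closes exactly this gap with Lemma~\ref{lem:ProductHJI}, which is a real argument rather than bookkeeping: using local C-stability, the double centralisers $L_i = C_G(C_G(V_i))$ are shown to be closed, to contain $V_i$ as an open subgroup, to commute pairwise, and to be permuted by $\mathrm{Aut}(G)$, so that $L_1\cdots L_m$ is a \emph{characteristic} open subgroup isomorphic to $L_1\times\dots\times L_m$; since each $L_i$ is h.j.i.\ and hence has a basis of characteristic open subgroups, one then extracts a characteristic open subgroup of $U_2/U_3$ of the required product form, and normality in $\Gamma$ comes for free. You should either prove a statement of this kind or find a substitute; as written, your normality repair does not work. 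The remaining adjustments in your last paragraph (shrinking $U_1$ so that $U_1/U_2$ is again abelian of finite exponent after $U_2$ has been shrunk by finite index) match the paper and are fine, though note they contradict your earlier claim that items (i) and (ii) are ``inherited verbatim''.
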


\begin{proof}
Retain the notation of Theorem~\ref{cor0.5}. As recalled in \S \ref{sec:LAG}, we may decompose $ \tilde{H} $ as the direct product of Weil restrictions $ \prod \limits_{i=1}^{m} \mathcal{R}_{k_{i}'/k'} \tilde{H}_{i} $, where each $ \tilde{H}_{i} $ is an absolutely simple, simply connected algebraic group over a finite separable extension $ k_{i}' $ of $ k' $. Let $ G_{i} = \mathcal{R}_{k_{i}'/k'} \tilde{H}_{i}(k')  $.



Now, the compact group $ \Delta $ appearing in (iii) of Theorem \ref{cor0.5} is an open compact subgroup of $ G_{1}\times \dots \times G_{m} $. Therefore there  exists an open compact subgroup $\Delta_i$ for $G_i$ such that $\Lambda = \Delta_1 \times \dots \times \Delta_m$ is contained in $\Delta$. Thus $\pi(\Lambda) \simeq (\pi_{1})_{k'_{1}}(\Delta_1)\times \dots \times (\pi_{m})_{k'_{m}}(\Delta_m)  $ is an open subgroup of $ U_{2}/U_{3} $. However, it is not clear a priori that it is normalised by $\Gamma/U_3$. In order to ensure that, it suffices to apply Lemma~\ref{lem:ProductHJI} to the group $U_2/U_3$. This shows that, upon replacing each $\Delta_i$ by a suitable open subgroup, the image $\pi(\Lambda)$ is indeed an open subgroup of $U_2/U_3$ of the desired form, which is moreover normalised by $\Gamma/U_3$.

Let $U'_2$ denote the preimage of $\pi(\Lambda)$. Now the quotient $U_1/U'_2$ is finite-by-\{abelian of finite exponent\}. We may thus replace $U_1$ by a smaller open  normal subgroup $U'_1$ of $\Gamma$ containing $U'_2$ to ensure that $U'_1/U'_2$ is indeed abelian of finite exponent. Now the normal chain $U_3 \leq U'_2 \leq U'_1$ of $\Gamma$ satisfies all the requested properties. 
\end{proof}

To capture the properties of the compact factors appearing in  (iii) of Theorem~\ref{cor0.5bis}, we introduce the following terminology. 

\begin{de}\label{def:algebraicCompact}
A compact h.j.i. group $\Gamma$ is called \textbf{algebraic} if  there is a local field $k$ and a Zariski-connected, absolutely simple, adjoint algebraic $k$-group $H$ with universal cover $\pi \colon \tilde H \to H$, and a compact open subgroup $\Delta$ of $\tilde H(k)$ such that $\Gamma$ is isomorphic to $\pi(\Delta)$.
\end{de}

\section{The global structure of locally linear groups}

\subsection{Proof of the main theorem}

The following result is a reformulation of Theorem~\ref{thm:Main1} from the introduction, using the terminology introduced in  \S\ref{radicaltheories}.   

\begin{theorem}\label{mainthm}
Let $ G $ be a \tdlc group having an open compact subgroup which is linear over a local field $k$. Then $ G $ has a series of closed normal  subgroups: 
$$ \lbrace 1\rbrace \unlhd R \unlhd G_{1} \unlhd G_{0} \unlhd G $$ 
enjoying the following properties. 

The group $ R  = R_{[A]}(G)$ is the $ [A] $-regular radical of $ G $ and is locally solvable. The group $ G_{0} $ is an open characteristic subgroup of finite index in $ G $. The   quotient group $H_0 = G_0/R$, if non-trivial, has non-trivial closed normal subgroups, say $M_1, \dots, M_m$, satisfying the following.

\begin{enumerate}[(i)]

\item For some $l \leq m$ and all $i \leq l$, the group $M_i$ is a topologically simple algebraic group over a local field $k_i$, of the same characteristic and residue characteristic as $k$. In particular $M_i$ is compactly generated and abstractly simple. 

\item For all $j > l$, the group $M_j$ is   compact, h.j.i., and algebraic (in the sense of Definition~\ref{def:algebraicCompact}) over a local field $k_j$, of the same characteristic and residue characteristic as $k$.

\item Every non-trivial closed normal subgroup $N$ of $H_0$ contains $M_i$ for some $i \leq l$, or contains an open subgroup of $M_j$ for some $j > l$. 

\item The quotient group $H_{1} = G_1/R$ coincides with the product $  M_1 \dots M_m \cong M_1 \times \dots \times M_m$, which is closed in $H_0$.  In particular, $ H_{1} $ is compactly generated. Moreover  $H_{0}/H_{1} = G_0/G_{1}$ is  {locally abelian}.
\end{enumerate}
\end{theorem}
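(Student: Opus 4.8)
The plan is to combine the $[A]$-regular radical machinery from \S\ref{radicaltheories} with the structural decomposition of compact linear groups in Theorem~\ref{cor0.5bis}, transferring the local (profinite) structure of the open compact subgroup to global normal subgroups of $G$ via the structure lattice and the abstract commensurator.

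\medskip

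\textbf{Step 1: Setting up the radical.} Let $U \leq G$ be an open compact subgroup, linear over $k$, so $U \hookrightarrow GL_n(k)$ for some $n$. Set $R = R_{[A]}(G)$, which is a closed characteristic subgroup by Theorem~\ref{A-rad}. I would first show $R$ is locally solvable: applying Theorem~\ref{cor0.5bis} to $\Gamma = U$, we get a chain $U_3 \leq U_2 \leq U_1 \leq U$ with $U_1$ of finite index, $U_1/U_2$ abelian of finite exponent, $U_2/U_3$ a direct product of non-virtually-abelian h.j.i.\ algebraic factors, and $U_3$ solvable. The successive subquotients $U/U_1$ (finite), $U_1/U_2$ (abelian), and $U_3$ (solvable) all lie in the class $[A]$, so one shows $R_{[A]}(U) \cap$ (a suitable open subgroup) sits inside the preimage of $U_2/U_3$ modulo its solvable top/bottom pieces; more precisely $R_{[A]}(U)$ is locally solvable because the only non-$[A]$ part is the h.j.i.\ product, which is $[A]$-regular-free. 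Since $R \cap U = R_{[A]}(U)$ (using \cite[Prop.~6.14]{CRW13}), $R$ is locally solvable, and then $G/R$ is locally C-stable by Theorem~\ref{A-rad}(ii).

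\medskip

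\textbf{Step 2: The finite-index open subgroup and the product of h.j.i.\ factors.} Work inside $\bar U = UR/R$, a compact open subgroup of $G/R$, which by construction has an open subgroup that is virtually a direct product $V_1 \times \dots \times V_m$ of non-virtually-abelian h.j.i.\ groups (the images of $\pi_i(\Delta_i)$), and which has no non-trivial finite normal subgroup since $G/R$ is locally C-stable. Apply Lemma~\ref{lem:ProductHJI} to get a characteristic open subgroup $W_1 \times \dots \times W_m$. Then, following the abstract-commensurator argument of \S\ref{sec:LAG}: each $V_i$ (being h.j.i.\ algebraic) is h.c.c.b.\ by Example~\ref{h.c.c.b.}, so $\Comm(V_i) \cong \mathrm{Aut}(H_i(k_i))$ is a \tdlcsc\ group by Proposition~\ref{topiso}. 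The canonical map $G/R \to \Comm(V_i)$ is continuous by Lemma~\ref{generalcontinuity} (after noting the conjugation action of an open subgroup of $G/R$ commensurates $V_i$, since $\mathrm{Aut}(G/R)$ permutes the atoms $[V_i]$ of the structure lattice by Lemma~\ref{2m}). Define $G_0$ as the finite-index open characteristic subgroup stabilizing each atom $[V_i]$; then $G_0/R$ maps into $\mathrm{Aut}(H_i(k_i))$ for each $i$, and I build $M_i$ for $i \leq l$ as the closed normal subgroup of $H_0 = G_0/R$ generated by $H_i(k_i)^+$ (pulled back through the commensurator map), which is a topologically simple algebraic group by Definition~\ref{algebraicinS}, Theorem~\ref{thm:BT}, Theorem~\ref{thmT} (so compactly generated), and \cite{T64} (so abstractly simple); it is closed by Proposition~\ref{closed image}. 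For $j > l$, the relevant $H_j(k_j)$ is anisotropic, so $H_j(k_j)^+$ is trivial (Theorem~\ref{thm:BT}(i)) and instead $M_j$ is the compact h.j.i.\ algebraic group $\pi_j(\Delta_j)$ itself, realized as a locally normal subgroup of $H_0$.

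\medskip

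\textbf{Step 3: Verifying (i)--(iv).} Properties (i) and (ii) follow from the identifications above and the "same characteristic and residue characteristic" clause of Theorem~\ref{cor0.5bis}(iii). For (iii): given a non-trivial closed normal $N \trianglelefteq H_0$, the compact subgroup $N \cap \bar U$ is locally normal and non-trivial (as $H_0$ is locally C-stable, $N$ cannot be discrete/quasi-central), so by Lemma~\ref{structurelattice} applied to $\bar U$ — whose relevant normal subgroup is $V_1 \times \dots \times V_m$ with abelian quotient — we get $[V_i] \leq [N \cap \bar U]$ for some $i$; if $i \leq l$, simplicity of $M_i$ together with $N \cap M_i \neq \{1\}$ (since $N$ contains an open subgroup of $V_i$, which is Zariski-dense, forcing $N \cap M_i$ open hence all of $M_i$ by Theorem~\ref{thmT} and simplicity) gives $M_i \leq N$; if $i > l$, $N$ contains an open subgroup of $M_j$. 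For (iv): set $G_1/R = H_1 = M_1 \cdots M_m$. Showing this is the \emph{direct} product $M_1 \times \dots \times M_m$ uses that distinct $M_i$ have locally-trivial intersection (distinct atoms) plus that the $M_i$ centralize each other — for the algebraic simple factors this follows from the commuting of the $L_i$'s in Lemma~\ref{lem:ProductHJI} transported to $H_0$, and for a simple factor versus any other $M_j$ one uses simplicity again. Closedness of $H_1$ in $H_0$ follows from Proposition~\ref{closed image} for the simple factors and compactness for the rest; compact generation is then immediate. Finally $H_0/H_1$: an open compact subgroup of it is (a quotient of) $\bar U$ modulo $V_1 \cdots V_m$, which by Theorem~\ref{cor0.5bis}(ii) is abelian, so $H_0/H_1$ is locally abelian.

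\medskip

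\textbf{Main obstacle.} The delicate point is arranging the \emph{global} normality and the direct-product splitting in Step 2: Theorem~\ref{cor0.5bis} only controls the internal structure of the compact subgroup $U$, and a priori the factors $\pi_i(\Delta_i)$ are only normalized by an open subgroup of $U/U_3$, not by $G/R$. The resolution is precisely the interplay between Lemma~\ref{lem:ProductHJI} (which upgrades to a characteristic decomposition of the compact group), Lemma~\ref{2m}/Lemma~\ref{structurelattice} (which show $\mathrm{Aut}$ of the ambient group merely permutes the atoms), and the continuity of $G/R \to \Comm(V_i)$ from Lemma~\ref{generalcontinuity}, which lets one pass to the finite-index $G_0$ stabilizing each atom and then use Proposition~\ref{closed image} to get honest closed normal subgroups $M_i$ of $H_0$ rather than merely locally normal ones. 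A secondary technical annoyance, as flagged by Example~\ref{ex:Inseparable}, is keeping track of the distinction between $H_i(k_i)^+$ and $H_i(k_i)$ in positive characteristic, which is why the locally-abelian (not finite) quotient $H_0/H_1$ genuinely occurs and why one must invoke Theorem~\ref{thm:BT} carefully when identifying the simple factors.
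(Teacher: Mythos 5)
Your overall architecture (the $[A]$-regular radical to absorb the solvable part, Pink's theorem applied to the compact open subgroup, the structure lattice to produce the finite-index characteristic subgroup $G_0$, and the commensurator maps $\varphi_i\colon H_0\to\Comm(\pi_i(\Delta_i))\cong\Aut(\tilde H_i(k'_i))$) is exactly the paper's, and your Step~1 together with the first half of Step~2 reproduces Claims~1--4 of the actual proof. However, the construction of the $M_i$ in Step~2 contains a genuine gap. You take $M_i$ to be the closed normal subgroup generated by ``$H_i(k_i)^+$ pulled back through the commensurator map'', and you decide the compact/non-compact dichotomy according to whether $\tilde H_i$ is isotropic or anisotropic. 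Both points fail. First, there is no reason for $\Inn(\tilde H_i(k'_i))$, let alone $\tilde H_i(k'_i)^+/Z$, to be contained in the image $\varphi_i(H_0)$: a priori $\varphi_i(H_0)$ is only known to contain the compact open subgroup $\varphi_i(\pi_i(\Delta_i))$ of $\Inn(\tilde H_i(k'_i))$, so the ``pullback'' of $H_i(k_i)^+$ is not defined. Taking $G=U$ compact (e.g.\ $U=SL_n(\mathbf{F}_p[\![T]\!])$) already shows that every $\tilde H_i$ can be isotropic while every $M_i$ is compact; the dichotomy between cases (i) and (ii) of the theorem is governed by how much of $\Inn(\tilde H_i(k'_i))$ the \emph{global} group hits, not by anisotropy of $\tilde H_i$ (anisotropy forces compactness, but not conversely). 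Second, without intersecting with $\bigcap_{j\ne i}\Ker\varphi_j$ your $M_i$ does not isolate the $i$-th factor (a preimage under $\varphi_i$ contains $\Ker\varphi_i$, hence pieces of all the other factors), so the pairwise commutation and the direct-product decomposition $H_1\cong M_1\times\dots\times M_m$ in (iv) do not follow; the appeal to ``the commuting of the $L_i$'s in Lemma~\ref{lem:ProductHJI} transported to $H_0$'' has no content outside the profinite setting.

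The missing idea is the definition $M_i=\varphi_i^{-1}(\Inn(\tilde H_i(k'_i)))\cap\bigcap_{j\ne i}\Ker\varphi_j$, together with the two inputs that make it work: the Open Mapping Theorem (after checking that $M_i$ is $\sigma$-compact), which shows that $\varphi_i|_{M_i}$ is a homeomorphism onto an \emph{open} subgroup of $\Inn(\tilde H_i(k'_i))$, and Prasad's Theorem~(T) (Theorem~\ref{thmT}), which forces that open subgroup to be either compact or the whole of $\Inn(\tilde H_i(k'_i))\cong\tilde H_i(k'_i)^+/Z(\tilde H_i(k'_i)^+)$. This produces the alternative (i)/(ii), and the injectivity of $\varphi=\varphi_1\times\dots\times\varphi_m$ then yields the commutation relations and the direct product essentially for free. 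The remainder of your Step~3 (assertion (iii), closedness of $H_1$ via Proposition~\ref{closed image}, and local abelianness of $H_0/H_1$) matches the paper's argument and would go through once the $M_i$ are constructed correctly.
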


\begin{proof}
Let $U \leq G$ be a compact open subgroup which is linear over $k$. Let also $H = G/R$ and $V$ denote the image of $U$ in $H$. Theorem~\ref{cor0.5bis} applied  to the group $U$ yields closed normal subgroup $U_3 \leq U_2 \leq U_1$ satisfying the properties (i), (ii), (iii) and (iv) from that statement. 

\begin{claim}\label{claim:0}
$U_3$ is contained in $R$ as an open subgroup.  In particular $R$ is locally solvable.
\end{claim}

The image of $U_3$ in $H$ is a solvable locally normal subgroup. It must therefore be trivial, since $H$ is locally C-stable by Theorem~\ref{A-rad}. Thus $U_3 \leq R$. 

Assume now for a contradiction that $U_3$ is not open in $R$. Then $U \cap R$ contains $U_3$ as a closed normal subgroup of infinite index. Since $U \cap R$ is $[A]$-regular in $U$ by Theorem~\ref{A-rad}, it follows that the image of $U \cap R$ in $U/U_3$ contains a non-trivial locally normal subgroup belonging to $[A]$. However, by Lemma~\ref{structurelattice} and Theorem~\ref{cor0.5bis}, every non-trivial locally normal subgroup of $U/U_3$ contains a locally normal subgroup which is h.j.i. and algebraic. Those subgroups do not belong to $[A]$. This is a contradiction, and the claim stands proven.

\begin{claim}\label{claim:1}
There exist closed normal subgroups $V_2 \leq V_1$ of $V$ such that 
\begin{enumerate}[(i)]
\item $ V_{1} $ is of finite index in $ V $.

\item $ V_{1}/V_{2} $ is abelian of finite exponent.

\item There exist local fields $ k'_{1},\dots,k'_{m} $ of the same characteristic and residue characteristic than  $ k $, Zariski-connected, absolutely simple adjoint algebraic $ k'_{i} $-group $ H_{i} $, with universal covering $ \pi_{i} \colon \tilde{H_{i}}\rightarrow H_{i} $, and open compact subgroups $ \Delta_{i} \leq \tilde{H_{i}}(k'_{i})  $ such that $ V_{2} \simeq \pi_{1} (\Delta_{1} )\times \dots \times \pi_{m} (\Delta_{m} )  $ as topological groups.    In particular, the group $ V_{2}$ is a direct product of non-virtually abelian h.j.i. groups.
\end{enumerate}
\end{claim}

We denote by $V_i$ the image of $U_i$ in $V$. Then $V_i$ is a closed normal subgroup of $V$ (because $U_i$ is compact), and   $V_3$ is trivial by Claim~\ref{claim:0}. 

Notice that $V_1$ and $V_2$ satisfy conditions (i) and (ii) from the claim, in view of the corresponding properties of $U_1$ and $U_2$. It remains to check that $V_2$ satisfies (iii). Since $V_2 \simeq U_{2}R/R\simeq U_{2}/U_{2}\cap R$, it suffices to show that $U_2 \cap R$ is trivial in view of Theorem~\ref{cor0.5bis}(iii). By Claim~\ref{claim:0}, the group $U_3 $ is contained as an open subgroup of $U_2 \cap R$, so that the image of $U_2 \cap R$ in $U_2/U_3$ is a finite normal subgroup, and is thus trivial by Theorem~\ref{cor0.5bis}(iii) and Lemma~\ref{2m}. This shows that $U_2 \cap R = U_3$, so that $V_2 \simeq U_2/U_3$. 
The claim stands proven. 

\begin{claim}\label{claim:2}
The set of atoms of $\mathcal{LN}(H)$ coincides with $ \lbrace [\pi_{1}(\Delta_{1})],\dots,[\pi_{m}(\Delta_{m})]\rbrace $, and every non-zero element of $\mathcal{LN}(H)$ contains an atom. In particular $H$ has an open characteristic subgroup $H_0$ containing $V_2$, which commensurates $\pi_i(\Delta_i)$ for all $i = 1, \dots, m$.
\end{claim}

Since $V_1$ is open in $H$, we have $\mathcal{LN}(H) = \mathcal{LN}(V_1)$. In view of Claim~\ref{claim:1}, the hypotheses of Lemma~\ref{structurelattice} are satisfied by $V_1$. This proves the desired assertions on $\mathcal{LN}(H)$. 

Now the $H$-action on $\mathcal{LN}(H)$ permutes the atoms, and we define $H_0$ to be the kernel of that permutation action. Then $H_0$ is indeed open, characteristic and of finite index in $H$, and commensurates $\pi_i(\Delta_i)$ for all $i$. Since $V_2$ normalises $\pi_{i}(\Delta_{i})$ for all $i$, we have $V_2 \leq H_0$, as claimed. 

\begin{claim}\label{claim:3}
For each $i \in \{1, \dots, m\}$, let $\varphi_i \colon H_{0}\rightarrow \Comm(\pi_{i}(\Delta_{i}))$ be the homomorphism  induced by Claim~\ref{claim:2}. Then the  product homomorphism 
$$ 
\varphi = \varphi_1 \times \dots \times \varphi_m \colon H_{0}\rightarrow \Comm(\pi_{1}(\Delta_{1}))\times \dots\times \Comm(\pi_{m}(\Delta_{m})) 
$$  
is  continuous and injective, where each factor is endowed with the Aut-topology.
\end{claim}

In view of Lemma~\ref{generalcontinuity},  the map $ \varphi $ is a product of continuous homomorphisms, and  is thus continuous. Let us now check its injectivity.

In view of Definition~\ref{Cstable}, we have $ \Ker \varphi \leq QC_{H_0}(V_{2})$, and it suffices to check that $V_2$ has trivial quasi-centraliser in $H_0$. 

Recalling that $H$, and thus also $H_0$, is locally C-stable, we deduce from Lemma~3.9 and Proposition~3.14 from \cite{CRW13} that  $ QC_{H_0}(V_{2}C_{H_0}(V_{2})) =1$. Since $V_1$ is open in $H$,  we have $1 = QC_{H_0}(V_{2}C_{H_0}(V_{2})) = QC_{H_0}((V_{2}C_{H_0}(V_{2})) \cap V_1) = QC_{H_0}(V_{2}C_{V_1}(V_{2}) )$. Therefore it is enough to show that the centraliser $C_{V_1}(V_2)$ is trivial. 
Now  $ C_{V_{1}}(V_{2})\cap V_{2} \leq  QZ(V_{2}) $, which is trivial by Proposition~\ref{prop:locCstable}. Thus, $ C_{V_{1}}(V_{2}) $ embeds into $ V_{1}/V_{2} $, and  is thus abelian by Claim~\ref{claim:1}. But $ C_{V_{1}}(V_{2}) $ is also locally normal in $H$ (see e.g. \cite{CRW13}, Lemma 2.1), and must therefore be trivial because  $H$ is locally C-stable.

\begin{claim}\label{claim:4}
Let $i \in \{1, \dots, m\}$. Then there is an isomorphism of topological groups 
$$\Comm(\pi_i(\Delta_i)) \simeq \Aut(\tilde H_i(k'_i)),$$ 
where $\Comm(\pi_i(\Delta_i))$ has the Aut-topology and $\Aut(\tilde H_i(k'_i))$ has the Braconnier topology. 
\end{claim}

By Corollary~\ref{computecomm} and Proposition~\ref{toponComm}, we have an isomorphism of topological groups $\Comm(\Delta_i) \simeq \Aut(\tilde H_i(k'_i))$. Since $\Ker \pi_i$ is finite, there is an open subgroup $\Delta_i' \leq \Delta_i$ which intersects $\Ker \pi_i$ trivially. Thus $\pi_i$ induces an isomorphism of profinite groups between $\Delta'_i$ and its image, so that 
$$\Comm(\pi_i(\Delta_i)) = \Comm(\pi_i(\Delta'_i)) \simeq \Comm(\Delta'_i) = \Comm(\Delta_i).$$
The claim follows. 

\begin{claim}\label{claim:5}
Let $i \in \{1, \dots, m\}$ and set 
$$ M_{i} = \varphi_{i}^{-1}(\Inn(\tilde{H_{i}}(k_{i}')))\cap \bigcap \limits_{j\neq i} \Ker \varphi_{j},$$ 
where $\Inn(\tilde{H_{i}}(k_{i}'))$ is viewed as a subgroup of $\Comm(\pi_i(\Delta_i))$ by means of Claim~\ref{claim:4}. Then $M_i$ is a closed normal subgroup of $H_0$, and exactly one of the following assertions holds:
\begin{enumerate}[(a)]
\item $M_i$ is a compact, h.j.i. group which is algebraic over $k'_i$. 

\item $M_i \simeq \tilde H_i(k'_i)/Z(\tilde H_i(k'_i))$, and $\tilde H_i$ is isotropic over $k'_i$. In particular $M_i$ is a topologically simple algebraic group over $k'_i$. 
 
\end{enumerate}
\end{claim}

We first check that the quotient group $\tilde H_i(k'_i)/Z(\tilde H_i(k'_i))$ is isomorphic to $\Inn(\tilde{H_{i}}(k_{i}'))$ endowed with the Braconnier topology. Indeed, by Proposition~\ref{topiso}, the group $\Aut(\tilde{H_{i}}(k_{i}'))$ is locally compact, and by Remark~\ref{continuityoftheadjointmap} the canonical embedding $\tilde H_i(k'_i)/Z(\tilde H_i(k'_i)) \to \Aut(\tilde{H_{i}}(k_{i}'))$ is continuous. From Proposition~\ref{closed image}, we deduce that the latter embedding is a homeomorphism onto its image, namely $\Inn(\tilde{H_{i}}(k_{i}'))$, and that the latter is closed in $\Aut(\tilde{H_{i}}(k_{i}'))$.  This also implies that $M_i$ is a closed normal subgroup of $H_0$. 

We next observe that the restriction of  $\varphi_i$  to $M_i$ is a homeomorphism onto its image. Indeed $(\varphi_i)|_{M_i}$ is injective by Claim~\ref{claim:3}. Moreover  by Claim~\ref{claim:1}, we have $\pi_i(\Delta_i) \leq M_i$, and $ \varphi _{i}(\pi_{i}(\Delta_{i})) $ is open in $ \mathrm{Inn}(\tilde{H_{i}}(k_{i}'))\simeq \tilde{H_{i}}(k_{i}')/Z(\tilde{H_{i}}(k_{i}')) $. Thus $(\varphi_i)|_{M_i}$ is a continuous isomorphism onto an open, hence closed, subgroup of $ \mathrm{Inn}(\tilde{H_{i}}(k_{i}'))$. 

Since $ \mathrm{Inn}(\tilde{H_{i}}(k_{i}'))$ is second countable (see Poposition~\ref{topiso}) and $(\varphi_i)|_{M_i}$ is injective, we deduce that the compact group $\pi_i(\Delta_i)$ is of countable index in $M_i$. It follows that $M_i$ is $\sigma$-compact. By the Open Map Theorem  (see \cite{HR79}, Theorem (5.29)), we deduce that the map $(\varphi_i)|_{M_i}$ is open, as requested. 

Now if $M_i$ is compact, the desired claim follows by construction (see Theorem~\ref{h.j.i.} and Definition~\ref{def:algebraicCompact}). If $M_i$ is non-compact, then $\varphi_i(M_i)$ is a non-compact open subgroup  of $\Inn(\tilde H_i(k'_i))$ so that $\tilde H_i(k'_i)$ is non-compact. Hence $\tilde H_i$ is isotropic by \cite{P82}.  By Theorem~\ref{thmT}, the only non-compact open subgroup of $ \mathrm{Inn}(\tilde{H_{i}}(k_{i}'))\simeq \tilde{H_{i}}(k_{i}')/Z(\tilde{H_{i}}(k_{i}')) =  \tilde{H_{i}}(k_{i}')^+/Z(\tilde{H_{i}}(k_{i}')^+)$ is the whole group (see Theorem~\ref{thm:BT} for the last equality). The claim follows. 

\begin{claim}\label{claim:6}
We have $[M_i, M_j]=1$ for $i \neq j$. Moreover, the product $H_{1} = M_1 \dots M_m \cong M_1 \times \dots \times M_m$ is closed in $H_0$, and the quotient $H_0/H_{1}$ is  {locally abelian}.
\end{claim}

The injectivity of $\varphi$, established in Claim~\ref{claim:3}, ensures that the $M_i$'s commute pairwise, and that the canonical map from $M_1 \times \dots \times M_m$ onto the subgroup $S = M_1 \dots M_m$  is  a continuous isomorphism. To see that $H_{1}$ is closed, consider the canonical projection 
$H_0 \to H_0 / \overline{M_2 \dots M_m}$. If $M_1$ is compact, then it has closed image. If $M_1$ is not compact, then Claim~\ref{claim:5} and Proposition~\ref{closed image} ensure that $M_1$ has closed image as well. Hence the product $M_1\overline{M_2 \dots M_m}$ is closed in $H_0$, and a straightforward induction shows that $H_{1}$ is closed as well. 

Finally, since $V_2 \leq H_{1}$, it follows from Claim~\ref{claim:1} that $H_0/H_{1}$ is locally abelian.

\begin{claim}\label{claim:7}
Every non-trivial closed normal subgroup $N$ of $H_0$ contains some non-compact $M_i$, or an open subgroup of some compact  $M_j$.
\end{claim}

The group $V \cap N$ is a locally normal subgroup of $H_0$, and therefore there is some $i$ such that $[\pi_i(\Delta_i)] \leq [V \cap N]$ by Claim~\ref{claim:2}. If $M_i$ is compact, this yields the desired assertion. Otherwise we see that $N \cap M_i$ is an open normal subgroup of $M_i$, so that $M_i \leq N$ by Claim~\ref{claim:5}. The claim stands proven

\medskip
To conclude the proof, we denote by $G_0$ (resp. $ G_{1} $) the preimage of $H_0$ (resp. $ H_{1} $) in $G$, and re-order the set $\{M_1, \dots, M_m\}$ so that the non-compact elements come first. We see that all the requested assertions have been established in the claims above: Assertions~(v) in Claim~\ref{claim:6}, Assertion~(i) and (ii) in Claim~\ref{claim:5} and Assertion~(iii) in Claim~\ref{claim:7}.
\end{proof}

\subsection{Corollaries}

\begin{proof}[Proof of Corollary~\ref{cor:Main2}]
We apply Theorem~\ref{mainthm}, which yields subgroups  $M_i$  and $G_0$ of $G$. Let  $A$ be the   elementary radical of $G_0$, see Theorem~\ref{starelementaryradicalexistence}. 

\setcounter{claim}{0}
\begin{claim}\label{claim:1'}
We have $ R \leq A$. Moreover $A/R$ coincides with the   elementary radical of $H_0 = G_0/R$.
\end{claim}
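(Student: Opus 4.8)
The plan is to read off both assertions from the local solvability of $R$ together with the standard permanence properties of the class $\mathcal{E}$ of elementary groups. First I would record that $R$ is a closed normal subgroup of $G_0$: this is immediate from $R \unlhd G$ together with $R \leq G_1 \leq G_0$ (and from $R$ being closed in $G$). By Claim~\ref{claim:0}, $R$ is locally solvable, hence $R \in \mathcal{E}$ by Proposition~\ref{prop:transferfroms.c.togeneral}(ii). The defining property of the elementary radical $A = \Rad_{\mathcal{E}}(G_0)$ (Theorem~\ref{starelementaryradicalexistence}) then yields $R \leq A$, which is the first assertion.

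For the identification of $A/R$, write $\pi \colon G_0 \to H_0 = G_0/R$ for the canonical projection. For the inclusion $A/R \leq \Rad_{\mathcal{E}}(H_0)$: since $R \leq A$ and $A$ is closed normal in $G_0$, the subgroup $\pi(A) = A/R$ is a closed normal subgroup of $H_0$; being a Hausdorff quotient of the elementary group $A$, it is itself elementary, and hence contained in $\Rad_{\mathcal{E}}(H_0)$ by maximality. For the reverse inclusion, set $B = \pi^{-1}(\Rad_{\mathcal{E}}(H_0))$, a closed normal subgroup of $G_0$ with $\Ker(\pi|_B) = R$; thus $B$ is an extension of the elementary group $\Rad_{\mathcal{E}}(H_0)$ by the elementary group $R$, so $B$ is elementary, whence $B \leq A$ and $\Rad_{\mathcal{E}}(H_0) = \pi(B) \leq \pi(A) = A/R$. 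Combining the two inclusions gives $A/R = \Rad_{\mathcal{E}}(H_0)$, as claimed.

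I do not anticipate a genuine obstacle in this argument: everything reduces to the existence of the elementary radical (Theorem~\ref{starelementaryradicalexistence}) and the closure of $\mathcal{E}$ under closed subgroups, Hausdorff quotients and group extensions, all of which belong to the basic toolkit for elementary groups developed in \cite{W14c} (the quotient-closure being the only one of these not already invoked explicitly in \S\ref{radicaltheories}). The one point worth stating carefully is that $\pi(A) = A/R$ and $\pi(B)$ are genuinely closed in $H_0$, which here is automatic because both $A$ and $B$ are closed subgroups of $G_0$ containing the kernel $R$ of $\pi$.
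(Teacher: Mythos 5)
Your proof is correct and follows essentially the same route as the paper: $R\leq A$ by maximality of the elementary radical, the inclusion $\Rad_{\mathcal{E}}(H_0)\leq A/R$ via closure of $\mathcal{E}$ under extensions (using that $R$ is elementary), and the reverse inclusion via closure of $\mathcal{E}$ under quotients (Theorem~4.6(c) of \cite{W14c}). The only cosmetic difference is that you deduce the elementarity of $R$ from its local solvability via Proposition~\ref{prop:transferfroms.c.togeneral}(ii), whereas the paper quotes Theorem~\ref{thm:starradicals} directly; both are valid one-line citations.
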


By Theorem~\ref{thm:starradicals}, we have $ R \leq A$ and $R$ is elementary. Therefore $A/R$ contains the elementary radical of $H_0$. The claim follows, since the quotient group $A/R$ is elementary by Theorem~4.6(c) from \cite{W14c}.

\begin{claim}\label{claim:2'}
Set  $ W = \bigcap_{i=1}^{l} C_{H_{0}}(M_{i}) \leq H_0$, where $l$ is as in Theorem~\ref{mainthm}. Then $W$ is compact-by-\{locally abelian\}. In particular it is   elementary. 
\end{claim}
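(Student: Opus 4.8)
The plan is to show first that $W$ contains a compact open subgroup, and then that the quotient of $W$ by that subgroup is locally abelian; the elementarity then follows immediately from Proposition~\ref{prop:transferfroms.c.togeneral}(ii) applied to the locally abelian quotient, together with the fact that $\mathcal E$ is closed under extensions and contains all profinite groups.

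First I would locate a suitable open compact piece of $W$. Recall from the proof of Theorem~\ref{mainthm} that $V$ denotes the image of the given linear compact open subgroup $U$ in $H = G/R$, that $V_1 \leq V$ is open of finite index in $H$, that $V_1$ has a closed normal subgroup $V_2$ with $V_1/V_2$ abelian of finite exponent, and that $V_2 \simeq \pi_1(\Delta_1)\times\dots\times\pi_m(\Delta_m)$ with the non-compact factors $M_i$, $i\le l$, satisfying $\pi_i(\Delta_i)\le M_i$. I would intersect $W$ with $V_1$: the group $W\cap V_1$ is a compact open subgroup of $W$ (since $V_1$ is open in $H_0$). The key point is to understand $W \cap V_2$. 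Since $V_2 = \pi_1(\Delta_1)\times\dots\times\pi_m(\Delta_m)$ and $\pi_i(\Delta_i)\le M_i$ for $i\le l$ while $\pi_i(\Delta_i)$ sits inside $M_i$ for $i>l$ too, an element of $V_2$ centralising each $M_i$ with $i\le l$ must have trivial $i$-th component for every $i \le l$ (because $\pi_i(\Delta_i)$ has trivial centraliser inside itself, its quasi-center being trivial by Proposition~\ref{prop:locCstable}, or more simply because $C_{M_i}(M_i)=1$ as $M_i$ is a topologically simple algebraic group with $\pi_i(\Delta_i)$ Zariski-dense). Hence $W\cap V_2$ is contained in $\pi_{l+1}(\Delta_{l+1})\times\dots\times\pi_m(\Delta_m)$, which is compact. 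Therefore $W\cap V_2$ is a compact normal subgroup of the compact open subgroup $W\cap V_1$ of $W$, and I would take it as the compact normal piece.

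It remains to bound the quotient $(W\cap V_1)/(W\cap V_2)$ and then all of $W/(W\cap V_2)$. The former embeds into $V_1/V_2$, which is abelian; so $W\cap V_1$ is compact-by-abelian, and in particular $W$ itself is \{compact-by-abelian\}-by-finite, hence it has an open compact-by-abelian subgroup, which is already enough to conclude that $W$ is compact-by-\{locally abelian\}: indeed $W\cap V_1$ is open of finite index in $W$, it is compact-by-abelian, hence $W$ is compact-by-\{locally abelian\}. For the elementarity, I would argue that $W$ has the compact normal subgroup $W\cap V_2$, with quotient $W/(W\cap V_2)$ being \{finite-by-abelian\}-by-\ldots — more cleanly, $W/(W\cap V_2)$ has the abelian open subgroup $(W\cap V_1)/(W\cap V_2)$, hence is locally abelian, hence elementary by Proposition~\ref{prop:transferfroms.c.togeneral}(ii); and then $W$, being an extension of an elementary group by a profinite (hence elementary) group, is elementary since $\mathcal E$ is closed under extensions.

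The main obstacle I anticipate is the bookkeeping in the step showing $W \cap V_2 \leq \pi_{l+1}(\Delta_{l+1})\times\dots\times\pi_m(\Delta_m)$: one must be careful that $M_i$ for $i\le l$ really does contain $\pi_i(\Delta_i)$ with the centraliser relation as stated, and that centralising $M_i$ forces the $i$-th coordinate (in the product decomposition of $V_2$) to vanish — this uses that the projection of $V_2$ to its $i$-th factor $\pi_i(\Delta_i)$ lands in $M_i$ and that $\pi_i(\Delta_i)$ acts faithfully on itself by conjugation. Once that containment is in hand, everything else is a routine application of the closure properties of $\mathcal E$ recorded in \S\ref{radicaltheories} and of Proposition~\ref{prop:transferfroms.c.togeneral}(ii).
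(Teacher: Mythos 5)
There is a genuine gap in your globalisation step. The compact piece you exhibit, $W\cap V_2$, is normal in $W\cap V_1$, but there is no reason for it to be normal in $W$: the subgroup $V_2\simeq \pi_1(\Delta_1)\times\dots\times\pi_m(\Delta_m)$ is only \emph{commensurated}, not normalised, by $H_0$, so $W$-conjugates of $W\cap V_2$ need not equal $W\cap V_2$. More seriously, your passage from the open subgroup to all of $W$ fails: $W\cap V_1$ is open in $W$ but \emph{not} of finite index in general, since $V_1$ is compact while $W$ need not be (for instance $W$ can map onto a non-compact discrete abelian piece of the locally abelian quotient $H_0/H_1$). Even the weaker statement that $W$ has an open compact-by-abelian subgroup does not yield that $W$ is compact-by-\{locally abelian\} --- that assertion requires a compact subgroup normal in \emph{all} of $W$ --- nor does it yield elementarity, since having an elementary open subgroup does not imply being elementary (every simple algebraic group over a local field has profinite, hence elementary, compact open subgroups).

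The paper's argument repairs exactly this point by working with the global closed normal subgroups $M_j$ rather than the local pieces $\pi_j(\Delta_j)$: since $[M_i,M_j]=1$ for $i\neq j$ (Claim~\ref{claim:6} in the proof of Theorem~\ref{mainthm}), each compact $M_j$ with $j>l$ is contained in $W$, so $\tilde W=M_{l+1}\cdots M_m$ is a compact subgroup of $W$ which is normal in all of $H_0$, hence in $W$. One then checks that $W\cap(M_1\cdots M_l)=1$ because $M_1\cdots M_l$ has trivial centre, whence $W\cap H_1=\tilde W$ and $W/\tilde W$ embeds into the locally abelian group $H_0/H_1$; elementarity then follows from Proposition~\ref{prop:transferfroms.c.togeneral}(ii) and closure of $\mathcal E$ under extensions. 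Your computation that $W\cap V_2\le \pi_{l+1}(\Delta_{l+1})\times\dots\times\pi_m(\Delta_m)$ is correct and is the local shadow of this, but the claim genuinely needs the global compact normal subgroup, which your argument does not produce.
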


We have $M_j \leq W$ for all $j >l$ by Theorem~\ref{mainthm}(i). Thus $\tilde W = M_{l+1} \dots M_m$ is a compact normal subgroup of $W$. Moreover $W \cap (M_1 \dots M_l)= 1$, since the latter product has trivial center in view of  Theorem~\ref{mainthm}(ii) and (v). It follows that $W/\tilde W$ embeds into $H_0/H_{1}$, which is locally abelian. This implies that $W$ is elementary by Proposition~\ref{prop:transferfroms.c.togeneral}(ii). 

\begin{claim}\label{claim:3'}
Every non-trivial closed normal subgroup $N$ of $H_0$ which is not contained in $W$ contains some $M_i$ with $i\in \{1, \dots, l\}$. 
\end{claim}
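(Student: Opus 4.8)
The plan is to exploit the topological simplicity of the non-compact factors $M_1, \dots, M_l$ together with the fact that both $N$ and each $M_i$ are normal in $H_0$, so that the relevant commutators are trapped inside the intersections $N \cap M_i$.

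First I would unwind the hypothesis $N \not\leq W$. Since $W = \bigcap_{i=1}^{l} C_{H_0}(M_i)$ by definition (Claim~\ref{claim:2'}), there must be an index $i \in \{1, \dots, l\}$ with $N \not\leq C_{H_0}(M_i)$; equivalently, the commutator $[N, M_i]$ is non-trivial. Next I would observe that $[N, M_i] \leq N \cap M_i$: indeed $[N, M_i] \leq N$ because $N$ is normal in $H_0$, and $[N, M_i] \leq M_i$ because $M_i$ is normal in $H_0$. Hence $N \cap M_i$ is non-trivial; it is moreover closed, being an intersection of closed subgroups, and it is normal in $M_i$, since $N$ is normalised by $H_0 \geq M_i$.

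Finally, by Theorem~\ref{mainthm}(i) (equivalently, Claim~\ref{claim:5}(b)), for $i \leq l$ the group $M_i$ is a topologically simple algebraic group over the local field $k_i$, so it has no proper non-trivial closed normal subgroup. Therefore $N \cap M_i = M_i$, i.e.\ $M_i \leq N$, which is exactly the desired conclusion. I do not anticipate any real obstacle; the only point requiring care is bookkeeping, namely that after the re-ordering carried out at the end of the proof of Theorem~\ref{mainthm} the indices $i \leq l$ correspond precisely to the non-compact, topologically simple factors, so that full topological simplicity (rather than merely the h.j.i. property of the compact factors $M_j$, $j > l$) is available — it is exactly for this reason that the compact factors are instead absorbed into $W$, and why a closed normal subgroup meeting a compact $M_j$ is only claimed to contain an open subgroup of $M_j$ in Claim~\ref{claim:7}.
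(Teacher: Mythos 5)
Your proof is correct and is essentially the paper's own argument, just written in the contrapositive direction: the paper assumes $N$ contains no non-compact $M_i$ and deduces $[N,M_i]\leq N\cap M_i=1$ for all $i\leq l$, hence $N\leq W$. Both versions rest on exactly the same two facts, namely that $[N,M_i]$ lands in $N\cap M_i$ because both subgroups are normal in $H_0$, and the topological simplicity of the $M_i$ with $i\leq l$.
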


Assume that $N$ does not contain any non-compact $M_i$. Then $[N, M_i] \leq N \cap M_i =1$ since $M_i$ is topologically simple. Thus $N \leq W$ as desired. 

\begin{claim}\label{claim:4'}
$W$ coincides with the   elementary radical of $H_0$. 
\end{claim}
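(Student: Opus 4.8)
The plan is to establish the two inclusions $W \subseteq \Rad_{\mathcal{E}}(H_0)$ and $\Rad_{\mathcal{E}}(H_0) \subseteq W$, where $\Rad_{\mathcal{E}}(H_0)$ denotes the elementary radical of $H_0$ furnished by Theorem~\ref{starelementaryradicalexistence}.

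For the first inclusion I would begin by recording that $W = \bigcap_{i=1}^{l} C_{H_0}(M_i)$ is a closed normal subgroup of $H_0$: each centraliser $C_{H_0}(M_i)$ is closed because $H_0$ is Hausdorff, and is normal in $H_0$ because $M_i$ is, and both properties are inherited by finite intersections. Since $W$ is elementary by Claim~\ref{claim:2'}, the defining property of the elementary radical as the largest closed normal elementary subgroup gives $W \leq \Rad_{\mathcal{E}}(H_0)$ at once.

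For the reverse inclusion I would argue by contradiction. Suppose $\Rad_{\mathcal{E}}(H_0) \not\leq W$. Then $\Rad_{\mathcal{E}}(H_0)$ is a non-trivial closed normal subgroup of $H_0$ which is not contained in $W$, so Claim~\ref{claim:3'} yields an index $i \in \{1, \dots, l\}$ with $M_i \leq \Rad_{\mathcal{E}}(H_0)$. As the class of elementary groups is closed under passing to closed subgroups (\cite[Theorem~4.6(b)]{W14c}), $M_i$ is elementary. But by Theorem~\ref{mainthm}(i), $M_i$ is a topologically simple algebraic group over a local field, in particular a non-discrete, compactly generated, topologically simple \tdlc group, i.e. a member of the class $\mathscr S$; and no group in $\mathscr S$ is elementary (see \cite{W14}, cf. also \cite{CRW14}). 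This is the contradiction we sought, so $\Rad_{\mathcal{E}}(H_0) \leq W$, whence $W = \Rad_{\mathcal{E}}(H_0)$.

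The one step that is not mere bookkeeping with the preceding claims is the input that members of $\mathscr S$ are never elementary; this is where the argument genuinely uses something, and I would simply quote it. If one prefers a more self-contained run, Claim~\ref{claim:3'} can be bypassed: for each $i \leq l$ the subgroup $\Rad_{\mathcal{E}}(H_0) \cap M_i$ is a closed normal subgroup of the topologically simple group $M_i$, hence equals $1$ or $M_i$; the latter is impossible by the non-elementarity of $M_i$ just discussed, so $[\Rad_{\mathcal{E}}(H_0), M_i] \leq \Rad_{\mathcal{E}}(H_0) \cap M_i = 1$, and intersecting the resulting inclusions $\Rad_{\mathcal{E}}(H_0) \leq C_{H_0}(M_i)$ over $i \leq l$ gives $\Rad_{\mathcal{E}}(H_0) \leq W$ directly.
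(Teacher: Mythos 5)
Your proof is correct and follows essentially the same route as the paper: the inclusion $W \leq \Rad_{\mathcal{E}}(H_0)$ from Claim~\ref{claim:2'} and maximality of the radical, and the reverse inclusion via Claim~\ref{claim:3'} together with the facts that closed subgroups of elementary groups are elementary while non-discrete compactly generated topologically simple groups are not. Your alternative bypassing Claim~\ref{claim:3'} is a harmless variant of the same argument.
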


That $W$ is contained in the   elementary radical follows from Claim~\ref{claim:2'}. If that inclusion were proper, then the   elementary radical of $H_0$ would contain some non-compact $M_i$ by Claim~\ref{claim:3'}. This is impossible because every closed subgroup of an   elementary group is   elementary (see \cite{W14c}, Theorem~4.6(b)), while  non-discrete compactly generated topologically simple groups are not elementary (see \cite{W14}, Proposition~6.2).

\medskip
To conclude the proof, we remark that $H' = H_0/W$ is isomorphic to $G_0/A$ in view of Claims~\ref{claim:1'} and~\ref{claim:4'}. Thus it suffices to show all the desired assertions for the quotient $H_0/W$. For each $i \in \{1, \dots, l\}$, we define a group $M'_i$ as the image of $M_i$ in the quotient $H'=H_0/W$. That image is injective because $M_i$ is simple, and closed by Proposition~\ref{closed image}. Thus each $M'_i$ is a topologically simple algebraic group over $k'_i$. The assertions that the $M'_i$ are precisely the minimal normal subgroups of $H_0/W$, and that every non-trivial closed normal subgroup contains one of them, follow from Claim~\ref{claim:3'}. That $H_{1}' = M'_1 \dots M'_l$ is closed follows from the same argument as in the proof of Claim~\ref{claim:6} in Theorem~\ref{mainthm}. Finally, that $(H_0/W)/H_{1}'$ is locally abelian follows from Theorem~\ref{mainthm}(v). Therefore that quotient is   elementary by  Proposition~\ref{prop:transferfroms.c.togeneral}(ii). 
\end{proof}

\begin{proof}[Proof of Corollary~\ref{cor:TopolSimple}]
Assume that $G$ is non-discrete.  Since it is topologically simple, its $ [A] $-regular radical $R$ is either trivial or the whole of $ G $. 

Assume that $R=G$. Then $G$ is locally solvable by Theorem~\ref{mainthm}. Moreover Theorem~5.3 of \cite{CRW14} implies that $G$ is not compactly generated. 
  
Assume now that $ R =1$. By the definition of the $ [A] $-regular radical, $ G $ is not locally abelian. Hence the product $ M_{1}\times \dots\times M_{m} $ from Theorem~\ref{mainthm} is non-trivial. Since $ G $ is topologically simple, we have $ m=1 $ and $ G= M_{1} $. Since $ G $ is not compact (because a topologically simple profinite group is finite, hence discrete), we obtain the desired conclusion. 
\end{proof}

\begin{proof}[Proof of Corollary~\ref{cor:S}]
Immediate from Corollary~\ref{cor:TopolSimple}.
\end{proof}

\begin{proof}[Proof of Corollary~\ref{cor:LC}]
Each class of groups listed in the statement consists of  linear groups. Assume conversely that $G$ is a compactly generated, topologically simple, locally compact group that is linear. If $G$ is connected, then it is a simple Lie group, as a consequence of the solution to Hilbert's fifth problem. Otherwise $G$ is totally disconnected. If it is non-discrete, then it is algebraic by Corollary~\ref{cor:S}. If it is discrete, then it is residually finite by a theorem of Mal'cev (see e.g. \cite{LS01} Window 7, Proposition 8), hence a finite simple group. 
\end{proof}

\subsection{Some examples}

In this section, we describe a family of examples of \tdlc groups satisfying the hypotheses of Theorem~\ref{thm:Main1}, and illustrating some peculiar properties   that the quotient $H_0= G_0/R$ can have in general. For the construction, we use the Nottingham group.

\begin{de}
The \textbf{Nottingham group}, denoted $ J $, is the group of normalized continuous automorphisms of the ring $ \mathbb{F}_{p}[\![T]\!] $. Otherwise stated, an element $ g\in J $ is defined by its action on $ T $ and is of the following form
$$ g(T) = T + \sum \limits_{i=2}^{\infty}a_{i}T^{i}, \: \: \; a_{i}\in \mathbb{F}_{p} $$
\end{de}

We will use the universality of the Nottingham group, asserted in the following.

\begin{theorem}[Main result in \cite{C97}]\label{universalityofNottingham}
Every countably based pro-$ p $ group can be embedded, as a closed subgroup, in the Nottingham group.
\end{theorem}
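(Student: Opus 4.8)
This is the main result of \cite{C97}; here is a plan for a proof. Since $G$ is profinite and $J$ is a Hausdorff topological group, any continuous injective homomorphism $\phi\colon G\to J$ is automatically a homeomorphism onto its (necessarily closed) image, so it is enough to construct such a $\phi$. First I would record the structure of the two sides. Being countably based and pro-$p$, $G$ is the inverse limit $G=\varprojlim_kG_k$ of a tower of finite $p$-groups, with $G_k=G/N_k$ for a countable descending neighbourhood base $\{N_k\}_{k\geq 0}$ of open normal subgroups satisfying $\bigcap_kN_k=\{1\}$; refining this tower from the bottom up (using that a non-trivial normal subgroup of a finite $p$-group meets the centre non-trivially) one may assume $N_{k-1}/N_k$ is cyclic of order $p$ and central in $G/N_k$, so that each $G_k$ is a central extension of $G_{k-1}$ by $C_p$. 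Dually, $J$ is itself pro-$p$: $J=\varprojlim_mJ/J_m$ with $J_m=\{g\in J:g(T)\equiv T\bmod T^{m+1}\}$, each $J/J_m$ finite, and $\bigcap_mJ_m=\{1\}$. Constructing $\phi$ thus amounts to constructing a compatible family $\psi_m\colon G\to J/J_m$, and $\phi$ is injective exactly when $\bigcap_m\ker\psi_m=\{1\}$.

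The first main step is a reduction to a single universal group. Applying the Kaloujnine--Krasner embedding theorem (in its profinite form, for pro-$p$ groups) repeatedly along the central tower, one obtains a \emph{compatible} system of closed embeddings $G_k\hookrightarrow W_k$, where $W_k=C_p\wr\cdots\wr C_p$ (the $k$-fold permutational wreath product), compatible with the canonical surjections $G_{k+1}\to G_k$ and $W_{k+1}\to W_k$; passing to the inverse limit then yields a closed embedding of $G$ into $W_\infty=\varprojlim_kW_k$. Since every finite $p$-group embeds in some $W_k$ by Kaloujnine's theorem, this in particular covers the case of finite $G$. It therefore suffices to embed $W_\infty$ as a closed subgroup of $J$.

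The second main step is to realise $W_\infty$ inside $J$ by explicit wild automorphisms of $\mathbb F_p[[T]]$. Here I would exploit a self-similarity of $J$ providing ``room'': for each integer $n$ coprime to $p$, the inclusion of subrings $\mathbb F_p[[T^n]]\hookrightarrow\mathbb F_p[[T]]$ yields an injective homomorphism $\lambda_n\colon J\hookrightarrow J$, sending an automorphism $h$ of $\mathbb F_p[[T^n]]$ to the unique normalised $g\in J_n$ with $g(T)^n=h(T^n)$ (the power series $(1+\cdots)^{1/n}$ makes sense precisely because $p\nmid n$), so that one can place many ``independent'' copies of $J$, and of $C_p$, inside $J$ and in particular inside arbitrarily deep congruence subgroups $J_m$. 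Matching the self-similar structure of $W_\infty$ against such a decomposition of $J$, one builds the copy of $W_\infty$ layer by layer --- the $k$-th wreath layer of $W_\infty$ being realised by automorphisms acting on the coefficients of $\mathbb F_p[[T]]$ ``at the $k$-th scale'' --- and takes the closed subgroup generated. Combined with Step~1, this gives the required closed embedding $G\hookrightarrow J$.

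The hard part is Step~2: turning the abstract self-similarity into honest, explicitly commuting power series at infinitely many scales, and checking that the closed subgroup of $J$ so obtained is \emph{exactly} $W_\infty$ --- neither collapsed by unintended relations among the generators, nor failing to be closed --- which is the explicit book-keeping over $\mathbb F_p[[T]]$ forming the technical heart of \cite{C97}. A subsidiary but genuinely necessary point is the compatibility in Step~1: one must choose the Kaloujnine--Krasner embeddings so that the squares involving the quotient maps $G_{k+1}\to G_k$ and $W_{k+1}\to W_k$ commute, which makes the limit map automatically injective and reduces everything to the single group $W_\infty$; this is routine once the central series has been arranged with order-$p$ factors.
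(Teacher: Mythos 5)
The paper does not prove this statement: it is quoted verbatim as the main theorem of \cite{C97}, and no internal proof is offered, so there is nothing to compare your argument against except Camina's own. That said, your outline is a faithful summary of the structure of her proof --- first reduce to the single universal countably based pro-$p$ group $\varprojlim_k\,(C_p\wr\cdots\wr C_p)$, then realise that inverse limit inside $J$ using the self-embeddings $J\hookrightarrow J$ induced by the subrings $\mathbb{F}_p[[T^n]]$ with $p\nmid n$, whose images land in deep congruence subgroups and so can be stacked to produce the successive wreath layers. As you acknowledge, essentially all of the mathematical content sits in your Step~2, which you describe but do not execute; the proposal is therefore an accurate roadmap rather than a proof, which in the context of this paper is precisely the status the statement has anyway. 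One minor imprecision in Step~1: the Kaloujnine--Krasner theorem embeds an extension of $A$ by $Q$ into the \emph{regular} wreath product $A^{Q}\rtimes Q$, not the permutational one, so the tower produced that way is not the tower of Sylow $p$-subgroups of $S_{p^k}$; either tower is universal (the permutational one most cleanly via the faithful action of $G$ on the coset tree of a chain of index-$p$ open normal subgroups with trivial intersection, and centrality of the factors is not needed), but you should commit to one.
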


The following construction  shows that the group $H_0$ from Theorem~\ref{thm:Main1} need not be second countable, and that it need not have any maximal compact normal subgroup. 

\begin{ex}\label{ex}
Consider the algebraic group $ SL_{n} $ over the local field $ \mathbb{F}_{p}(\!(T)\!) $. Then $ U = SL_{n}(\mathbb{F}_{p}[\![T]\!])/Z(SL_{n}(\mathbb{F}_{p}[\![T]\!]))  $ is a compact linear group which is h.j.i. by Theorem~\ref{h.j.i.}. Let $ L $ be a \tdlc  group admitting a continuous embedding into  the Nottingham group $ J $. Then the semi-direct product $ G = U\rtimes L $ is a \tdlc group.\\

We claim that $ G $ is locally $ C $-stable. Let us first check that $ QZ(G) $ is trivial. First observe that   $ QZ(U) $ is trivial by \cite{BEW11}, Proposition 5.1. Hence, if $ ul\in QZ(U\rtimes L) $, then $ l\in L $ must be non-trivial. Since a non-trivial element in $ J $ acts by outer automorphism on $ U $, we deduce that $ QZ(G) $ is trivial.

We now show that $ G $ has no non-trivial locally normal abelian subgroup. Arguing by contradiction, let $ K $ be such a subgroup. Since $ QZ(G) $ is trivial, $ K $ must be infinite. Since $ U $ is h.j.i. but not virtually abelian, the intersection  $ U\cap K $ must be trivial. Thus $K$ commutes with $U$. This is impossible, because $L$ acts on $U$ by outer automorphisms, so that $C_G(U)=1$. This confirms the claim. 

The claim implies that $R=1$, and that $G=G_0=H_0$ in the notation of Theorem~\ref{thm:Main1}. We now specialise this family of examples in two ways. 

Taking $ L = J $, endowed with the discrete topology, we see that  $G$ is a metrisable, locally linear, \tdlc group which is not second countable. 

Now consider $ L = \bigoplus _{n\in \mathbf{N}} \mathbf{Z}/p^{n}\mathbf{Z} $, with the discrete topology. It embeds in the pro-$p$ group $ \prod_{n\in \mathbf{N}} \mathbf{Z}/p^{n}\mathbf{Z} $, which itself embeds in $J$ by Theorem \ref{universalityofNottingham}. 
In this situation, we see that $G$ is a locally linear, \tdlcsc  group, but has no maximal compact normal subgroup.
\end{ex}

{\small

}

\end{document}